\theoremstyle{plain}
\newtheorem{theorem}{Theorem}[section]
\newtheorem{proposition}[theorem]{Proposition}
\newtheorem{corollary}[theorem]{Corollary}
\newtheorem{lemma}[theorem]{Lemma}
\newtheorem{theoremalpha}{Theorem}
\newtheorem{corollaryalpha}[theoremalpha]{Corollary}
\theoremstyle{definition}
\newtheorem{definition}[theorem]{Definition}
\newtheorem{remark}[theorem]{Remark}
\newcommand{\lra}{\longrightarrow}
\newcommand{\noi}{\noindent}
\newcommand{\PP}{\mathbf{P}}
\newcommand{\RR}{\mathbf{R}}
\newcommand{\CC}{\mathbf{C}}
\newcommand{\QQ}{\mathbf{Q}}
\newcommand{\OO}{\mathcal  {O}}
\newcommand{\JJ}{\mathcal  {J}}
\newcommand{\fra}{\frak{a}}
\newcommand{\frb}{\frak{b}}
\newcommand{\frc}{\frak{c}}
\newcommand{\Adj}{{\rm Adj}}
\DeclareMathOperator{\vol}{vol}
\DeclareMathOperator{\ord}{ord}
\DeclareMathOperator{\Bs}{Bs}
\DeclareMathOperator{\BB}{{\bf B}}
\begin{document}

\title{Extension of sections via adjoint ideals}

\author[L. Ein]{Lawrence Ein}
\address{Department of Mathematics \\ University of
Illinois at Chicago, \hfil\break\indent  851 South
Morgan Street (M/C 249)\\ Chicago, IL 60607-7045, USA}
\email{ein@math.uic.edu}

\author[M. Popa]{Mihnea Popa}
\address{Department of Mathematics \\ University of
Illinois at Chicago, \hfil\break\indent  851 South
Morgan Street (M/C 249)\\ Chicago, IL 60607-7045, USA}
\email{mpopa@math.uic.edu}

\thanks{LE was partially supported by the NSF grant DMS-0700774. MP was partially supported by the NSF grant DMS-0601252 and by a Sloan Fellowship.}

\maketitle

\section{Introduction}

We prove some extension theorems and applications, inspired by the very interesting recent results of Hacon-M$^c$Kernan \cite{hm1}, \cite{hm2} and Takayama \cite{takayama}, used in the minimal model program and in turn inspired by fundamental results of Siu. Parts of the proofs we give follow quite closely techniques in \cite{kawamata}, \cite{hm1}, \cite{hm2}, and \cite{positivity}, which use asymptotic constructions. Related analytic statements are proved and used in Berndtsson-P\u aun \cite{bp} and 
P\u aun \cite{paun2}. Our main result is:

\begin{theoremalpha}\label{extension0}
Let $(X, \Delta)$ be a log-pair, with $X$ a normal projective variety and $\Delta$ an effective $\QQ$-divisor with $[\Delta] = 0$. Let $S \subset X$ be an irreducible normal effective Cartier divisor such that $S \not\subset {\rm Supp}(\Delta)$, and $A$ a big and nef $\QQ$-divisor on $X$ such that $S \not\subseteq \BB_{+} (A)$. Let $k$ be a positive integer such that $M = k(K_X + S + A + \Delta)$ is Cartier. Assume the following:
\begin{itemize}
\item $(X, S +  \Delta)$ is a plt pair. 
\item $M$ is pseudo-effective.
\item the restricted base locus $\BB_{-} (M)$ does not contain any irreducible closed subset $W\subset X$ 
with minimal log-discrepancy at its generic point ${\rm mld}(\mu_W ; X, \Delta +S) < 1$,
which intersects $S$ but is different from $S$ itself.
\item the restricted base locus $\BB_{-} (M_S)$ does not contain any irreducible closed subset 
$W\subset S$ with minimal log-dicrepancy at its generic point ${\rm mld}(\mu_W ; S, \Delta_S) < 1$.\footnote{By recent results on inversion of adjunction \cite{bchm}, this fourth condition is in fact superfluous, as it is implied by the third. We prefer however to work 
without using this result, especially for avoiding a circular argument in the use of the results in the last section. On the other hand, for centers on $S$ adjunction implies ${\rm mld}(\mu_W ; X, \Delta +S) \le {\rm mld}(\mu_W ; S, \Delta_S)$. Hence in the third assumption it is in fact enough to require only ${\rm mld}(\mu_W ; X, \Delta) < 1$ for centers not contained in $S$, as the rest is implied by the fourth. Same comments for all other statements.} 
\end{itemize}
Then the restriction map
$$H^0 (X, \OO_X (mM)) \longrightarrow H^0 (S, \OO_S (mM_S))$$
is surjective for all $m \ge 1$. Moreover, if $X$ is smooth, it is enough to assume that we have only $M\sim_{\QQ}  k(K_X + S + A + \Delta)$, with $M$ Cartier.
\end{theoremalpha}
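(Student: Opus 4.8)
The plan is to reduce the assertion to lifting a single section $\sigma\in H^0\big(S,\OO_S(mM_S)\big)$ at a time, and to lift it through the adjoint ideal exact sequence after producing, by an asymptotic construction, a suitable effective boundary on $X$ whose restriction to $S$ is adapted to $\sigma$. We may assume $H^0\big(S,\OO_S(mM_S)\big)\ne 0$, so that $M_S$ is $\QQ$-effective (otherwise there is nothing to prove). Since $(X,S+\Delta)$ is plt, $S$ is normal and adjunction gives $(K_X+S+\Delta)|_S=K_S+\Delta_S$, where $\Delta_S\ge 0$ is the different and $[\Delta_S]=0$; writing $K_X+S+A+\Delta=\tfrac1kM$ we obtain $M_S\sim_{\QQ}k(K_S+\Delta_S+A_S)$, hence
$$mM_S\;\sim_{\QQ}\;(K_S+\Delta_S)+\tfrac{mk-1}{k}M_S+A_S .$$
Using $S\not\subseteq\BB_{+}(A)$ we also fix once and for all a splitting $A\sim_{\QQ}A_0+E$ with $A_0$ ample, $E\ge 0$, $S\not\subseteq\mathrm{Supp}(E)$ and the coefficients of $E$ as small as we please; this supplies the honest ampleness needed in the vanishing and multiplier-ideal steps below, where the mere nefness of $A$ would not be enough.

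If $\mathrm{div}(\sigma)$ could be extended to $X$ we would be done quickly: taking $\Gamma_S:=\Delta_S+\tfrac{mk-1}{km}\mathrm{div}(\sigma)+E'_S$ for a sufficiently general effective $\QQ$-divisor $E'_S$ in the class $A_S$ gives $K_S+\Gamma_S\sim_{\QQ}mM_S$, with $\sigma$ automatically a section of $\OO_S(mM_S)\otimes\MI{S,\Gamma_S}$ since the relevant divisor enters $\Gamma_S$ with coefficient $<1$, and one could then run the adjoint sequence for a corresponding $\Gamma$. The obstacle is precisely that extending $\mathrm{div}(\sigma)$ is the problem at hand, so the divisor-part $\tfrac{mk-1}{km}\mathrm{div}(\sigma)$ must be replaced by an asymptotic object. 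Following the asymptotic constructions of \cite{kawamata}, \cite{hm1}, \cite{hm2}, \cite{takayama}, \cite{positivity}: because $M$ is pseudo-effective and $A_0$ ample the divisors $\ell\big(M+\tfrac1\ell A_0\big)$ are big, and one builds a sequence of boundaries out of restrictions to $S$ of sections of these big divisors on $X$, together with $\sigma$ and its powers, and passes to the limit. The heart of the proof --- and the step I expect to be the main obstacle --- is to arrange this so that the limiting $\Gamma_S$ still has $\sigma$ as a section modulo $\MI{S,\Gamma_S}$, is the restriction of an effective $\Gamma$ on $X$ with $K_X+S+\Gamma\sim_{\QQ}mM$, and is produced through pairs that stay plt/klt; the delicate point is the comparison of asymptotic vanishing orders along subvarieties of $S$ with those of the ambient series on $X$.

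This comparison is exactly what the two hypotheses on restricted base loci control. The third hypothesis says that $\BB_{-}(M)$ avoids every $W\subset X$ that meets $S$, is different from $S$, and has ${\rm mld}(\mu_W;X,\Delta+S)<1$; equivalently the ambient series $\|M\|$ has no asymptotic vanishing along such $W$, so restricting and rescaling creates no non-klt point of the limiting $(S,\Gamma_S)$ there, $(X,S+\Gamma)$ remains plt near such centers, and the adjoint ideal $\Adj_S(X,\Gamma)$ acquires no cosupport that would obstruct surjectivity of the restriction $\Adj_S(X,\Gamma)\to\MI{S,\Gamma_S}$. The fourth hypothesis is the intrinsic analogue on $S$, forbidding asymptotic vanishing of the restricted series $\|M_S\|$ along bad centers of $S$; as the footnote notes it is implied by the third via inversion of adjunction, but it is retained in order to avoid citing \cite{bchm}.

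With $\Gamma$ and $\Gamma_S$ in hand one feeds them into the adjoint ideal exact sequence
$$0\to\OO_X(K_X+\Gamma)\otimes\MI{X,S+\Gamma}\to\OO_X(K_X+S+\Gamma)\otimes\Adj_S(X,\Gamma)\to\OO_S(K_S+\Gamma_S)\otimes\MI{S,\Gamma_S}\to 0 .$$
The section $\sigma$ lives in the right-hand group, and its image under the connecting homomorphism lies in $H^1\big(X,\OO_X(K_X+\Gamma)\otimes\MI{X,S+\Gamma}\big)$. Here Nadel vanishing cannot be invoked directly, since $\Gamma\sim_{\QQ}\tfrac{mk-1}{k}M+A+\Delta$ involves the pseudo-effective but generally non-nef class $M$; the construction is therefore set up with asymptotic multiplier ideals in place of the honest $\MI{X,S+\Gamma}$, so that the relevant twist has the form $K_X+(\text{klt boundary})+A_0+(\text{nef})$ with $A_0$ ample, and the corresponding vanishing for asymptotic multiplier ideals applies. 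Hence the obstruction vanishes, $\sigma$ lifts to $H^0\big(X,\OO_X(K_X+S+\Gamma)\big)=H^0\big(X,\OO_X(mM)\big)$, and doing this for every $\sigma$ and every $m\ge 1$ yields the surjectivity. Finally, to obtain the literal isomorphism $\OO_X(K_X+S+\Gamma)\cong\OO_X(mM)$ of sheaves one must control integral representatives throughout, which forces the hypothesis to be stated as the equality $M=k(K_X+S+A+\Delta)$ for general $X$; when $X$ is smooth one first passes to a log resolution on which $S$ stays a smooth prime divisor and everything is SNC, and the extra room there lets the discrepancy between $M$ and $k(K_X+S+A+\Delta)$ in the $\QQ$-linear case be absorbed into $\Delta$ without affecting any of the log-discrepancy computations.
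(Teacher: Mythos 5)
Your write-up correctly identifies the general shape of the argument (adjoint-ideal exact sequence, asymptotic constructions, splitting $A\sim_{\QQ}A_0+E$ with $A_0$ ample using $S\not\subseteq\BB_+(A)$, Nadel-type vanishing), but it is a plan rather than a proof: the step you yourself flag as ``the main obstacle'' --- producing an effective $\Gamma$ on $X$ with $K_X+S+\Gamma\sim_{\QQ}mM$ whose restriction still admits $\sigma$ modulo $\MI{S,\Gamma_S}$ --- is exactly the content of the theorem, and no mechanism for it is given. Concretely, what must be established is a comparison of ideals of the form $\JJ\big(S,\parallel mM_S\parallel\big)\subseteq\frc_m$, where $\frc_m$ is the base ideal of the \emph{restricted} series (sections coming from $X$): the hypotheses on $\BB_-(M)$ and $\BB_-(M_S)$ do not by themselves yield this; in the paper it is obtained by an induction on $m$ carried out after adding a fixed sufficiently ample $H$ (Proposition \ref{intermediate}), and, because $k\ge 2$, each inductive stage is subdivided into $k-1$ sub-steps using the decomposition $[k\Delta]=\Delta_1+\cdots+\Delta_{k-1}$ and the adjoint ideals $\Adj_{\Delta_{i,S}}$, with the $\BB_-$ hypotheses entering only to guarantee that these adjoint ideals are defined and that the relevant centers avoid $\BB_+(mM+H)$ so that vanishing applies. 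Your single-section framing (replace $\tfrac{mk-1}{km}\,\mathrm{div}(\sigma)$ by an asymptotic object) cannot get off the ground here precisely because, with $M$ only pseudo-effective and $k\ge2$, one has no a priori supply of sections of the restricted series to build the boundary from; this is why the paper proves extension first with $H$ added and only afterwards removes $H$.

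Two further missing ingredients are essential rather than cosmetic. First, the $\BB_-$ hypotheses concern arbitrary closed subsets $W$ with ${\rm mld}<1$, while the adjoint-ideal formalism restricts along (SNC) divisors; the paper first reduces, via special log-resolutions separating all divisorial valuations of log-discrepancy in $(0,1)$ (Theorem \ref{valuations} and Corollary \ref{separation}), to the situation where every such center is a component of $\Delta$, disjoint from the others when off $S$, and then removes the components off $S$ lying in $\BB(M)$ by subtracting their asymptotic vanishing orders. Without this reduction your appeal to the hypotheses is only heuristic. Second, even granting extension with $H$ added, the passage to the actual statement requires two separate applications of the precise form of Takayama's lemma (Lemma \ref{precise_takayama}): once, using the bigness of $A$ and $S\not\subseteq\BB_+(A)$, to absorb $\tfrac{mk-1}{lmk}H$ and remove $H$; and once more, after replacing $A$ by $A+\tfrac1m H$ for each $m$ individually, to upgrade from the $\QQ$-effective/$\BB(M_S)$ version to the pseudo-effective/$\BB_-(M_S)$ statement. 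Your sketch gestures at the bigness of $M+\tfrac1\ell A_0$ but does not contain either of these limiting arguments, and the asymptotic Nadel vanishing you invoke is not available for $\parallel mM\parallel$ itself when $M$ is merely pseudo-effective, which is precisely why the auxiliary ample divisor cannot be dispensed with until the very end.
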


To explain the notation in the statement, we start by recalling the following definitions and results from \cite{elmnp} \S1: given a $\QQ$-divisor $D$, the \emph{restricted base locus} of $D$ is defined as $\BB_{-} (D) := \bigcup \BB (D + A)$, where the union is 
taken over all ample $\QQ$-divisors $A$. We have that $\BB_{-} (D) = \emptyset$ iff $D$ is nef (thus $\BB_{-} (D)$ can be interpreted as the \emph{non-nef locus} of $M$, and it appears under this name in \cite{bdpp}), and $\BB_{-} (D) \neq X$ iff $D$ is 
pseudo-effective. The \emph{augmented base locus} of $D$ is defined as $\BB_{+} (D) := \BB (D - A)$ with $A$ any ample 
$\QQ$-divisor of sufficiently small norm. We have that $\BB_{+} (D) = \emptyset$ iff $D$ is ample, and $\BB_{+} (D) \neq X$ iff $D$ is big. There are obvious inclusions $\BB_{-} (D) \subseteq \BB (D) \subseteq \BB_{+} (D)$, where $\BB(D) : =  \bigcap_{m > 0} \Bs(mD)$ is the stable base locus of $D$. 

Recall that a plt (purely log-terminal) pair is one for which the log-discrepancy of every exceptional 
divisor is strictly greater than $0$. Therefore $(X, S+ \Delta)$ being plt in the statement above implies 
in particular that both $(X, \Delta)$ and $(S, \Delta_S)$ are klt pairs. Recall also that the \emph{minimal log-discrepancy} 
with respect to a pair $(X, \Delta)$, at the generic point of a proper irreducible closed subset $W \subset X$, is
$${\rm mld}(\mu_W ; X, \Delta): = \underset{c_X(E) = W}{\rm inf}~ {\rm ld} (E; X, \Delta),$$
where the infimum is taken over the log-discrepancies of all effective Cartier divisors $E$ on models of $X$ whose center 
$c_X(E)$ on $X$ is equal to $W$. 
Note that all $W\subset X$ such that  ${\rm mld}(\mu_W ; X, \Delta) < 1$ are among the log-canonical centers of 
$(X, \lceil \Delta\rceil)$, but the converse is often not true.

If $A$ is ample, it is $\QQ$-linearly equivalent to an effective divisor with arbitrarily low coefficient. Thus we can assume that $[A + \Delta] = 0$. Hence Theorem \ref{extension0} provides a refinement of (the singular version of) the extension result of Hacon-M$^c$Kernan, \cite{hm2} Theorem 5.4 (applied in combination with \cite{hm1} Corollary 3.17, or \cite{hm2} Theorem 6.3). The main feature is that $M$ can be assumed to be pseudo-effective, and not necessarily $\QQ$-effective, while 
the transversality condition is imposed only with respect to the restricted base locus $\BB_{-} (M)$, and only to those components of $\Delta$ which intersect $S$. Moreover, one has to consider only centers with minimal log-discrepancy 
strictly less than $1$, usually forming a proper subset among all log-canonical centers. 

Note also that if $\Delta$ has any components which intersect $S$, the third assumption in the Theorem automatically implies that $\BB_{-} (M) \neq X$, i.e. that $M$ is pseudo-effective. In the nef case we can in fact eliminate all extra 
hypotheses:

\begin{corollaryalpha}\label{nef}
In the notation of Theorem \ref{extension0}, if $K_X + S + A + \Delta$ is nef and $(X, S+ \Delta)$ is plt, then the restriction map $H^0 (X, \OO_X (mM)) \longrightarrow H^0 (S, \OO_S (mM_S))$ is surjective for all $m \ge 1$. 
\end{corollaryalpha}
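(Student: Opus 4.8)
The plan is to obtain Corollary \ref{nef} as an immediate consequence of Theorem \ref{extension0}, by checking that nefness of $K_X + S + A + \Delta$ automatically supplies the extra hypotheses of the theorem (beyond the standing data carried over in ``the notation of Theorem \ref{extension0}''). First, since $M = k(K_X + S + A + \Delta)$ with $k$ a positive integer, $M$ is nef precisely when $K_X + S + A + \Delta$ is; in particular $M$ is pseudo-effective, which disposes of the second bullet.

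For the third bullet I would use the characterization recalled right after the statement of Theorem \ref{extension0}, namely that $\BB_{-}(D) = \emptyset$ if and only if $D$ is nef. Applied to $D = M$ this gives $\BB_{-}(M) = \emptyset$, so the third condition holds vacuously: the empty set contains no irreducible closed subset $W \subset X$ at all, let alone one of the excluded type. For the fourth bullet I need the corresponding statement for $M_S$. Here I would invoke the elementary fact that the restriction of a nef $\QQ$-divisor to a subvariety is nef — any curve $C \subset S$ is also a curve in $X$ and $(M_S \cdot C)_S = (M \cdot C)_X \ge 0$ — so $M_S$ is nef on $S$, hence $\BB_{-}(M_S) = \emptyset$ and the fourth condition is again vacuous.

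All the remaining hypotheses of Theorem \ref{extension0} — $X$ normal projective, $\Delta$ effective with $[\Delta] = 0$, $S$ an irreducible normal effective Cartier divisor with $S \not\subset {\rm Supp}(\Delta)$ and $S \not\subseteq \BB_{+}(A)$, $A$ big and nef, $M$ Cartier, and $(X, S + \Delta)$ plt — are exactly the ``notation of Theorem \ref{extension0}'' carried over into the corollary (with the plt hypothesis restated for emphasis). Hence Theorem \ref{extension0} applies verbatim and yields the surjectivity of $H^0(X, \OO_X(mM)) \to H^0(S, \OO_S(mM_S))$ for every $m \ge 1$. There is no real obstacle here: the single point worth spelling out is that nefness is preserved under restriction to $S$, which is what lets the fourth hypothesis be dismissed on its own, independently of any inversion-of-adjunction statement (cf.\ the footnote to Theorem \ref{extension0}).
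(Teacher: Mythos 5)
Your proposal is correct and is essentially the paper's own argument: $M$ nef forces $\BB_{-}(M)=\emptyset$ and, since nefness restricts to $S$, also $\BB_{-}(M_S)=\emptyset$, so the last three hypotheses of Theorem \ref{extension0} hold automatically and the theorem applies directly.
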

\begin{proof}
If $M$ is nef, then $\BB_{-} (M) = \emptyset$, and the same holds for $M_S$, so the last three hypotheses in the statement of Theorem \ref{extension0} are automatic.
\end{proof}

This provides in particular an analogue of the Basepoint-Free Theorem for divisors of  this form, in the 
neighborhood of $S$, and globally if $S$ is ample.

\begin{corollaryalpha}
Let $(X, \Delta)$ be a log-pair, with $X$ a normal projective variety and $\Delta$ an effective $\QQ$-divisor with $[\Delta] = 0$. Let $S \subset X$ be an irreducible normal effective Cartier divisor such that $S \not\subset {\rm Supp}(\Delta)$ and 
$(X, S+ \Delta)$ is plt, and $A$ a big and nef $\QQ$-divisor on $X$ such that $S \not\subseteq \BB_{+} (A)$. If $K_X + S + A + \Delta$ is nef, then it is semiample in a neighborhood of $S$. If in addition
$S$ is ample, then $K_X + S + A + \Delta$ is semiample.
\end{corollaryalpha}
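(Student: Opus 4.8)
The plan is to handle the two assertions in turn: semiampleness of $K_X+S+A+\Delta$ near $S$ will follow by combining semiampleness on $S$ itself with the surjectivity of restriction supplied by Corollary \ref{nef}, while the global statement when $S$ is ample will follow from a direct application of the Basepoint-Free Theorem on $X$.

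First I would show that $M_S := M|_S$ is semiample on $S$, where $M = k(K_X + S + A + \Delta)$ is the Cartier divisor in the statement. Since $S$ is a normal Cartier divisor and $(X, S+\Delta)$ is plt, adjunction gives $M_S = k(K_S + A_S + \Delta_S)$ with $(S, \Delta_S)$ klt; here $A_S := A|_S$ is nef (restriction of a nef divisor) and big, the latter being precisely the content of the hypothesis $S \not\subseteq \BB_{+}(A)$ via the behavior of the augmented base locus under restriction recalled in the introduction, and $M_S$ is nef because $M$ is. Then for every integer $a \ge 1$ the $\QQ$-divisor
$$ a M_S - (K_S + \Delta_S) = \bigl(a - \tfrac{1}{k}\bigr) M_S + A_S $$
is a sum of a nef and a big-and-nef divisor, hence big and nef, so the Basepoint-Free Theorem applied to the klt pair $(S,\Delta_S)$ shows $M_S$ is semiample. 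Fix $m > 0$ with $\OO_S(mM_S)$ globally generated and choose generating sections $s_1,\dots,s_r$. By Corollary \ref{nef} the restriction $H^0(X,\OO_X(mM)) \to H^0(S,\OO_S(mM_S))$ is surjective, so each $s_i$ lifts to $\tilde s_i \in H^0(X,\OO_X(mM))$; the common zero locus $Z\subset X$ of the $\tilde s_i$ meets $S$ exactly in the common zero locus of the $s_i$ (restriction is an isomorphism on fibres at points of $S$), which is empty. Hence $\Bss{mM}$ is disjoint from $S$, i.e. $M$ — equivalently $K_X+S+A+\Delta$ — is semiample on the open neighborhood $X\setminus Z$ of $S$.

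For the global statement, assume in addition that $S$ is ample. Then $A+S$, being the sum of a nef divisor and an ample one, is ample. Writing $K_X + \Delta = \tfrac{1}{k}M - (A+S)$, one gets for every $a \ge 1$
$$ a M - (K_X + \Delta) = \bigl(a - \tfrac{1}{k}\bigr) M + (A + S), $$
which is ample, in particular nef and big. Since $(X,\Delta)$ is klt and $M$ is a nef Cartier divisor, the Basepoint-Free Theorem on $X$ yields that $M$, hence $K_X+S+A+\Delta$, is semiample.

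I do not anticipate a genuine obstacle here. The only points needing care are the adjunction bookkeeping on $S$ (ensuring $\Delta_S$ has coefficients $<1$ so that $(S,\Delta_S)$ is klt, which is exactly the plt hypothesis) and the bigness of $A_S$, which is built into the setup; the one substantive external input, surjectivity of the restriction of sections, is furnished by Corollary \ref{nef}, itself a consequence of Theorem \ref{extension0}.
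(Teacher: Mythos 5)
Your argument is correct. For the first assertion it coincides with the paper's: adjunction to write $M_S=k(K_S+A_S+\Delta_S)$, the plt hypothesis giving $(S,\Delta_S)$ klt, bigness of $A_S$ from $S\not\subseteq\BB_{+}(A)$, the Basepoint-Free Theorem on $S$, and then Corollary \ref{nef} to lift a generating set of sections of $mM_S$, so that $\Bs(|mM|)$ misses $S$. Where you diverge is the global statement for $S$ ample: you apply the Basepoint-Free Theorem directly on $X$, using that $(X,\Delta)$ is klt (a consequence of plt-ness of $(X,S+\Delta)$, as recalled in the introduction) and that $aM-(K_X+\Delta)=(a-\tfrac1k)M+S+A$ is ample once $S$ is; this is a clean, self-contained argument that, notably, does not use the extension theorem or the first part at all for this assertion. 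The paper instead deduces the global statement from the local one: if $\BB(M)\neq\emptyset$ it is positive-dimensional by Zariski's theorem (\cite{positivity} Remark 2.1.32), so an ample $S$ would have to meet it, contradicting $\BB(M)\cap S=\emptyset$. Your route is more direct here; the paper's route illustrates how far one gets from the lifting statement alone, and its Zariski argument would also apply in situations where one only knows semiampleness near $S$ rather than the precise form of $M$. Both are valid, and the small verifications you flag ($\Delta_S$ klt via plt, bigness of $A_S$ via $\BB_{+}$) are exactly the points the paper relies on as well.
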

\begin{proof}
If $k$ is a positive integer such that $M = k(K_X + S + A + \Delta)$ is Cartier, we can write 
$M_S = k(K_S + A_S + \Delta_S) = K_S + (k-1)(K_S + A_S + \Delta_S) + A_S + \Delta_S$.
Note that the pair $(S, \Delta_S)$ is klt, and the assumptions imply that $M_S - K_S - \Delta_S$ is nef and big. The Basepoint-Free Theorem (cf. e.g. \cite{km} Theorem 3.3) implies then that $M_S$ is semiample. Since by Corollary \ref{nef} all sections of multiples of $M_S$ lift, this gives the semiampleness of $M$ in a neighborhood of $S$. Now if $\BB( M) \neq \emptyset$, it must be 
positive dimensional by a well-known result of Zariski (see \cite{positivity} Remark 2.1.32).  
In case $S$ is ample, this implies that 
$S$ intersects $\BB(M)$ nontrivially,  a contradiction with the above.
\end{proof}

The situation turns out to be very special when $k=1$. In this case many of the assumptions in Theorem \ref{extension0}, especially that of pseudo-effectivity, can be dropped, and 
we obtain with the same (simplified) method a proof of Takayama's extension theorem \cite{takayama}, Theorem 4.1. We present this approach to Takayama's theorem as a toy model for the general proof at the beginning of \S5. 

The case $\Delta = 0$  and $X$ smooth in Theorem \ref{extension0} is particularly instructive. It clarifies how various results mentioned above can be strengthened (there is no transversality hypothesis for $S$), 
but at the same time it shows that one cannot hope for removing the a priori pseudo-effectivity hypothesis as in Takayama's theorem, as soon as one passes from Cartier divisors to arbitrary $\QQ$-divisors (cf. the Example in \S5).

\begin{corollaryalpha}
Let $X$ be a smooth projective variety,  $S\subset X$ a smooth divisor, and $A$ a big and nef $\QQ$-divisor on $X$. Assume that $K_X + S+ A$ is pseudo-effective, and let $k$ be an integer and $M$ a Cartier divisor such that $M \sim_{\QQ} k(K_X + S + A)$. Then, for all $m\ge1$, the restriction map
$$H^0 (X, \OO_X (mM)) \longrightarrow H^0 (S, \OO_S (mM_S))$$
is surjective.
\end{corollaryalpha}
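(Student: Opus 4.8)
This is the special case $\Delta=0$, $X$ smooth of Theorem \ref{extension0}, so the plan is to deduce it from that theorem. First I would check that with $\Delta=0$ and $X$ (hence $S$) smooth the hypotheses of Theorem \ref{extension0} collapse to a single nontrivial one. Indeed $(X,S)$ is log smooth, hence plt; $M$ is pseudo-effective by assumption; and for every proper irreducible closed subset $W\subset X$ one has $\mld(\mu_W;X,S)=\OOdim_X W\ge 1$ when $W\not\subset S$ (the generic point of $W$ lies off $S$ and $X$ is smooth), while $\mld(\mu_W;X,S)=\OOdim_X W-1=\OOdim_S W$ when $W\subset S$ (since $(X,S)$ is log smooth), which is $<1$ only for $W=S$. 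As $W=S$ is explicitly excluded in the third condition, that condition is vacuous; the fourth is vacuous as well, because $\mld(\mu_W;S,0)=\OOdim_S W\ge 1$ for every proper $W\subset S$, and we may assume $M_S$ is pseudo-effective (otherwise $H^0(S,\OO_S(mM_S))=0$ and there is nothing to prove, so $\BB_-(M_S)\ne S$). Hence the only thing left to verify before invoking Theorem \ref{extension0} is $S\not\subseteq\BB_+(A)$.

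If $S\not\subseteq\BB_+(A)$ I would apply Theorem \ref{extension0} directly, in the form valid for smooth $X$ where one needs only $M\sim_{\QQ}k(K_X+S+A)$ with $M$ Cartier. The real content is the case $S\subseteq\BB_+(A)$, which the statement does not exclude and must therefore be handled separately. There, $A|_S$ is nef (restriction of a nef divisor), and by Nakamaye's theorem identifying the augmented base locus of a big and nef divisor with its null locus, $S\subseteq\BB_+(A)$ forces $A^{\dim S}\cdot S=0$, so $A|_S$ is not big. The natural first attempt is to perturb: fixing an ample Cartier divisor $B$, for every $j\ge 1$ the $\QQ$-divisor $A+\tfrac{1}{j}B$ is ample, so $S\not\subseteq\BB_+(A+\tfrac{1}{j}B)=\emptyset$, and $N_j:=jM+kB$ is a big Cartier divisor with $N_j\sim_{\QQ}(jk)\bigl(K_X+S+(A+\tfrac{1}{j}B)\bigr)$; Theorem \ref{extension0} applied to this data then gives surjectivity of $H^0(X,\OO_X(lN_j))\to H^0(S,\OO_S(l(N_j)_S))$ for all $l,j\ge 1$, and choosing $B$ so that $B-S-K_X$ is ample one also gets, via Kodaira vanishing, that $H^0(X,\OO_X(lkB))\to H^0(S,\OO_S(lkB_S))$ is surjective for all $l\ge 1$.

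The hard part is the descent from the $N_j$ back to $M$ as $j\to\infty$. The naive move — take $\sigma\in H^0(S,\OO_S(mM_S))$ and a section $\tau$ of a multiple of $B_S$, lift $\sigma\tau$ using the perturbed statement, and divide out $\tau$ — is circular, since the resulting lift need not be divisible on all of $X$ by the chosen lift of $\tau$; and the obstruction does not disappear if one argues instead at the level of section rings, viewing $\bigoplus_j H^0(X,\OO_X(jM+kB))$ as a graded module over $\bigoplus_m H^0(X,\OO_X(mM))$. My best guess is therefore that the correct way to treat $S\subseteq\BB_+(A)$ is not a formal limit but a re-reading of the proof of Theorem \ref{extension0}: the hypothesis $S\not\subseteq\BB_+(A)$ should enter there only through the bigness of $A|_S$, which is used so that the asymptotic multiplier ideals on $S$ are defined; when $A|_S$ is merely nef one should run the same construction with $A|_S$ replaced by $A|_S+\eps(\text{ample on }S)$ and pass to the limit on $S$ — which should be harmless because, with $\Delta=0$ and $X$ smooth, the relevant multiplier ideals on $S$ stabilize to $\OO_S$, while the extension step over the ambient $X$ only ever uses that $A$ is nef, not big, on $X$. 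Making this last assertion precise is the crux of the argument.
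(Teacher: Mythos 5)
Your first paragraph is exactly the paper's (implicit) argument: this corollary is stated as the case $\Delta=0$, $X$ smooth of Theorem \ref{extension0}, invoking its final sentence so that only $M\sim_{\QQ}k(K_X+S+A)$ with $M$ Cartier is needed, and your verification that the plt hypothesis and both mld conditions become vacuous (together with the harmless reduction to $M_S$ pseudo-effective) is the entire content of that derivation. The hypothesis $S\not\subseteq\BB_{+}(A)$ is meant to be inherited from Theorem \ref{extension0} --- note that the preceding corollaries restate it verbatim --- and the paper neither offers nor needs any argument for the case $S\subseteq\BB_{+}(A)$; its absence from the displayed statement reads as an omission of wording rather than as a stronger claim that the paper proves.

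Since you chose to read the statement literally, the second half of your proposal is where the gap lies, and you acknowledge it yourself: neither the perturbation by $\tfrac{1}{j}B$ (whose circularity you correctly identify) nor the concluding ``re-read the proof of Theorem \ref{extension0} with $A$ only nef on $X$'' suggestion is carried out. Moreover, the diagnosis behind that suggestion does not match the paper's proof. Bigness of $A$ together with $S\not\subseteq\BB_{+}(A)$ is not used merely to make $A|_S$ big: it is used on the ambient $X$, in Step 5 of the proof of Theorem \ref{extension0} (and in Step 2 of Theorem \ref{takayama}), where the auxiliary ample divisor $H$ is removed via Lemma \ref{precise_takayama}. Condition (1) of that lemma requires a decomposition $A\sim_{\QQ}\frac{mk-1}{lmk}H+C+B$ with $B$ ample and $C$ effective, of small norm, with $S\not\subset{\rm Supp}(C)$, and producing such a decomposition is precisely what ``$A$ big and nef with $S\not\subseteq\BB_{+}(A)$'' is invoked for; the paper is explicit (remark after Step 1 of Theorem \ref{takayama}, and Remark \ref{simplification}) that nefness of $A$ suffices only for the step in which $H$ has been added, while bigness is necessary to remove $H$. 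So when $A|_S$ is nef but not big the obstruction is not a limit of multiplier ideals on $S$ but the failure of Takayama's descent step on $X$ --- exactly the descent you found you could not perform. The clean fix is to restore the hypothesis $S\not\subseteq\BB_{+}(A)$ in the statement, after which your first paragraph is a complete proof matching the paper's.
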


Theorem \ref{extension0} has as an immediate corollary another strengthening of an extension result of Hacon-M$^c$Kernan, namely \cite{hm1} Corollary 3.17. We state it separately, especially for the particularly clean statement at the end. In addition to the improvements mentioned above, note that it is enough to add any ample divisor $H$, and \emph{all} sections of $mM + H$ can be lifted.

\begin{corollaryalpha}\label{extension2}
Let $(X, \Delta)$ be a log-pair, with $X$ a normal projective variety and $\Delta$ an effective $\QQ$-divisor with $[\Delta] = 0$. Let $S \subset X$ be an irreducible normal effective Cartier divisor such that $S \not\subset {\rm Supp}(\Delta)$. 
Let $B$ be a nef $\QQ$-divisor. Define $M := K_X + S + B + \Delta$. Let $H$ be any ample $\QQ$-divisor on $X$, and let $m\ge 1$ be any integer such that $mM + H$ is Cartier. Assume the following:
\begin{itemize}
\item $(X, S +  \Delta)$ is a plt pair. 
\item $M$ is pseudo-effective.
\item the restricted base locus $\BB_{-} (M)$ does not contain any irreducible closed subset $W\subset X$ 
with ${\rm mld}(\mu_W ; X, \Delta+S) < 1$ which intersects $S$. 
\item the restricted base locus $\BB_{-} (M_S)$ does not contain any irreducible closed subset 
$W\subset S$ 
with ${\rm mld}(\mu_W ; S, \Delta_S) < 1$.
\end{itemize}
Then the restriction map
$$H^0 (X, \OO_X (mM+H)) \longrightarrow H^0 (S, \OO_S (mM_S + H_S))$$
is surjective.
\end{corollaryalpha}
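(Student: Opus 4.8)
The plan is to derive Corollary~\ref{extension2} from Theorem~\ref{extension0} by absorbing the ample divisor $H$ into the nef part. Write $H \sim_{\QQ} A'$ for a general effective $\QQ$-divisor with small coefficients; more precisely, choose a positive integer $\ell$ so that $\ell H$ is Cartier and very ample, and pick a general member $G \in |\ell H|$. Setting $A := B + \frac{1}{m\ell} G$ we get a big and nef $\QQ$-divisor with $mM + H \sim_{\QQ} m(K_X + S + A + \Delta)$; since $G$ is a general member of a very ample system it avoids $S$ (so $S \not\subseteq \BB_+(A)$, indeed $\BB_+(A)$ shrinks as $A$ is ample off a small perturbation) and its restriction $G_S$ is a general member of the very ample system $|\ell H_S|$ on $S$. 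One subtlety is that adding $\frac{1}{m\ell}G$ to $\Delta$ might spoil the plt hypothesis, so instead I would keep $\Delta$ fixed and treat $A$ as the big-and-nef $\QQ$-divisor in the theorem, which is exactly the structure Theorem~\ref{extension0} allows: the nef class $B$ need not be big, but $B + \frac{1}{m\ell}G$ is big and nef.

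First I would check that all four hypotheses of Theorem~\ref{extension0} hold for the new data $(X, \Delta)$, $S$, $A = B + \frac{1}{m\ell}G$, and $k = m$ (so that $M_{\text{new}} := m(K_X + S + A + \Delta) \sim_{\QQ} mM + H$ is Cartier). The plt condition on $(X, S+\Delta)$ is unchanged. For pseudo-effectivity: $M_{\text{new}} \sim_{\QQ} mM + H$ is the sum of the pseudo-effective class $mM$ and the ample class $H$, hence big, in particular pseudo-effective. For the base-locus conditions, the key point is that $\BB_-(M_{\text{new}})$ and $\BB_-((M_{\text{new}})_S)$ are \emph{smaller} than $\BB_-(M)$ and $\BB_-(M_S)$ respectively: since $mM + H = m M + (\text{ample})$, we have $\BB_-(mM + H) \subseteq \BB(mM+H) \subseteq \BB_-(mM) = \BB_-(M)$ by the very definition of the restricted base locus as $\bigcup_{A' \text{ ample}} \BB(M + A')$, and likewise on $S$ using $(mM+H)_S = mM_S + H_S$ with $H_S$ ample on $S$. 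Therefore the third and fourth hypotheses of Theorem~\ref{extension0}, which ask that these loci avoid certain subsets, follow a fortiori from the corresponding hypotheses in Corollary~\ref{extension2}.

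Having verified the hypotheses, Theorem~\ref{extension0} applied with $m' = 1$ (its conclusion for all $m' \ge 1$, here used for $m' = 1$) gives the surjectivity of
$$H^0(X, \OO_X(M_{\text{new}})) \longrightarrow H^0(S, \OO_S((M_{\text{new}})_S)).$$
It remains to transport this across the $\QQ$-linear equivalence $M_{\text{new}} \sim_{\QQ} mM + H$ to identify the two maps. Since $mM + H$ is Cartier and $M_{\text{new}}$ is Cartier and the two are $\QQ$-linearly equivalent, they are in fact linearly equivalent (a $\QQ$-linear equivalence between two Cartier divisors, after clearing denominators, gives $N(mM+H) \sim N M_{\text{new}}$; one must be slightly careful, but in our construction $M_{\text{new}} - (mM+H) = \frac{1}{\ell}G - H$ which is genuinely linearly equivalent to zero because $G \in |\ell H|$ means $\frac{1}{\ell} G \sim_{\QQ} H$, and in fact $G \sim \ell H$ so $mM_{\text{new}} = m(mM) + G$ with $G \sim \ell H$ — choosing the perturbation to be an honest divisor in $|\ell H|$ makes this a genuine linear equivalence after scaling, and for the section-lifting statement one may simply fix an isomorphism $\OO_X(M_{\text{new}}) \cong \OO_X(mM+H)$ compatible with restriction to $S$). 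Under this identification the restriction map for $M_{\text{new}}$ becomes the restriction map for $mM + H$, which is thus surjective.

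I expect the main obstacle to be purely bookkeeping: making the $\QQ$-linear equivalence $mM + H \sim_{\QQ} m(K_X + S + A + \Delta)$ into a statement about \emph{Cartier} divisors compatible with restriction to $S$, so that the surjectivity produced by Theorem~\ref{extension0} is literally the surjectivity claimed. This is handled by choosing $A = B + \frac{1}{m\ell}G$ with $G$ an honest general member of $|\ell H|$ not containing $S$: then $m(K_X+S+A+\Delta) = mM + \frac{1}{\ell}G$ and $\frac{1}{\ell}G \sim_{\QQ} H$, but after the single additional choice of writing $\ell H$-worth of the class one gets an actual isomorphism of line bundles $\OO_X(\ell(mM+H)) \cong \OO_X(\ell \cdot m(K_X+S+A+\Delta))$ restricting to the analogous one on $S$; since both sheaves in sight are already the $\ell$-th powers of line bundles whose sections we are comparing, this is harmless. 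No new geometric input beyond Theorem~\ref{extension0} is needed; the point of stating Corollary~\ref{extension2} separately is precisely the clean packaging with an arbitrary ample $H$ and the lifting of \emph{all} sections of $mM+H$.
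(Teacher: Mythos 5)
Your overall strategy --- absorb the ample divisor into the positive part and apply Theorem \ref{extension0} with $k=m$ and its ``$m$'' equal to $1$ --- is the paper's strategy, and your verification of the base-locus hypotheses ($\BB_{-}(mM+H)\subseteq \BB(mM+H)\subseteq \BB_{-}(M)$, and likewise on $S$) and of pseudo-effectivity is fine. But the specific way you absorb $H$ creates a genuine gap. By replacing $H$ with $\frac{1}{\ell}G$ for a general $G\in |\ell H|$ and setting $A=B+\frac{1}{m\ell}G$, you get $M_{\mathrm{new}}:=m(K_X+S+A+\Delta)=mM+\frac{1}{\ell}G$, which differs from the Cartier divisor $mM+H$ by the $\QQ$-divisor $\frac{1}{\ell}G-H$; this is $\QQ$-linearly trivial but in general not an integral divisor, so $M_{\mathrm{new}}$ is in general not Cartier at all. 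Theorem \ref{extension0} on a merely \emph{normal} $X$ requires the divisor $k(K_X+S+A+\Delta)$ itself to be Cartier; the relaxation to $M\sim_{\QQ}k(K_X+S+A+\Delta)$ is explicitly available only when $X$ is smooth, whereas the Corollary is stated for normal $X$. Your closing ``bookkeeping'' does not repair this: since $M_{\mathrm{new}}$ need not even be integral, there is no isomorphism $\OO_X(M_{\mathrm{new}})\cong\OO_X(mM+H)$ to fix, and passing to $\ell$-th multiples (comparing $\OO_X(\ell(mM+H))$ with $\OO_X(\ell M_{\mathrm{new}})$) only transports the lifting statement for sections of the multiple, not the claimed surjectivity for $mM+H$ itself.

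The fix is to drop the perturbation entirely, which is what the paper does: set $A_m:=B+\frac{1}{m}H$. This is ample (nef plus ample), so $\BB_{+}(A_m)=\emptyset$ and the condition $S\not\subseteq\BB_{+}(A_m)$ is automatic --- no general member of $|\ell H|$ is needed for this, and indeed your parenthetical justification via ``$G$ avoids $S$'' is beside the point, since $\BB_{+}$ depends only on the numerical class. With this choice one has the \emph{equality} of $\QQ$-divisors $m(K_X+S+A_m+\Delta)=mM+H$, which is Cartier by hypothesis, so Theorem \ref{extension0} applies verbatim with $k=m$ and exponent $1$, and its conclusion is literally the surjectivity of $H^0(X,\OO_X(mM+H))\to H^0(S,\OO_S(mM_S+H_S))$; the remaining hypotheses are checked exactly as in your second paragraph (bigness of $mM+H$ since $M$ is pseudo-effective and $H$ ample, plus the two base-locus inclusions). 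So the argument is salvageable, but as written the application of Theorem \ref{extension0} and the final identification of section spaces do not go through.
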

\begin{proof}
Fix an $m$ as in the statement. Write 
$$mM + H = m(K_X + S + A_m + \Delta),$$
where $A_m : = B + \frac{1}{m} H$ is an ample $\QQ$-divisor. 
Since $H$ is ample, $mM + H$ is $\QQ$-effective.  On the other hand, by definition we have 
$$\BB (mM_S + H_S) \subseteq \BB_{-} (M_S).$$ 
Thus all the hypotheses in Theorem \ref{extension0} are satisfied (we are replacing $k$ by $m$, and $m$ by $1$).
\end{proof}

\begin{remark}\label{simplification}
Again, if $M$ is already Cartier, the assumption that $M$ be pseudo-effective can be dropped. This 
follows from the proof of Takayama's result, Theorem \ref{takayama} below. 
\end{remark}

Some cases of the results above have been proved under weaker positivity hypotheses
by analytic methods. For instance, Takayama's result under weaker positivity was proved by Varolin \cite{varolin} (using ideas of Siu \cite{siu2} and P\u aun \cite{paun1}, where it is of course also 
shown that standard invariance of plurigenera holds under very general assumptions). We would also 
like to mention the nice recent preprint of Berndtsson-P\u aun \cite{bp}, where the authors use a special case of Theorem \ref{extension0} for a different view on subadjunction, and provide an analytic approach to it.

Finally, a relative version of Theorem \ref{extension0}, stated in \S6,  can be used to prove a general deformation-invariance-type statement. It is important to note part (ii) below: under resonable assumptions one does not require the a priori pseudo-effectivity 
of $M$ any more. In the case when $\Delta = 0$ and $A$ is integral, this is then a weak version of a well-known theorem due to Siu \cite{siu2} (cf. also \cite{paun1}). Besides Theorem \ref{extension0}, a main ingredient is the continuity of the volume function associated to pseudo-effective divisors.

\begin{theoremalpha}\label{fibration}
Let $\pi : X \rightarrow T$ be a projective morphism with normal fibers from a normal variety $X$ to a smooth curve $T$. Let $\Delta$ be a horizontal effective $\QQ$-divisor on $X$ such that $[\Delta] = 0$. Let $A$ be a $\pi$-big and nef $\QQ$-divisor and $k$ a positive integer such that $M=  k(K_X + A + \Delta)$ is Cartier. Denote by $X_t$ the fiber of $\pi$ over $t\in T$, and assume for all $t$ the following:
\begin{itemize}
\item $(X, X_t + \Delta) $ is a plt pair. 
\item $M$ is pseudo-effective.
\item the restricted base locus $\BB_{-} (M)$ does not contain any irreducible closed subset $W\subset X$ 
with ${\rm mld}(\mu_W ; X, \Delta) < 1$ which intersects $X_t$. 
\item the restricted base locus $\BB_{-} (M_{X_t})$ does not contain any irreducible closed subset 
$W\subset X_t$ 
with ${\rm mld}(\mu_W ; X_t, \Delta_{X_t}) < 1$.
\end{itemize}
Then: (i) the dimension of the space of global sections $H^0 (X_t, \OO_X (mM_{X_t}))$ is constant for $t \in T$, 
for all $m\ge 1$. Moreover, if $X$ is smooth, it is enough to assume that we have only $M\sim_{\QQ}  k(K_X + A + \Delta)$, with $M$ Cartier.

\noindent
(ii) If in addition the pair $(X_t, k\Delta_{X_t})$ is klt for all $t \in T$, then the hypothesis that $M$ be pseudo-effective is not necessary.
\end{theoremalpha}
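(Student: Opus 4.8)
The plan is to deduce Theorem~\ref{fibration} from the relative version of Theorem~\ref{extension0} proved in \S6 --- applied with the role of $S$ played by a fibre $X_{t_0}$ --- together with the standard semicontinuity formalism. For part~(i): the function $t \mapsto h^0(X_t, \OO_X(mM_{X_t}))$ is upper semicontinuous on $T$, so it equals its minimal value $n_0$ on a dense open subset, and it suffices to prove that $h^0(X_{t_0}, \OO_X(mM_{X_{t_0}})) \le h^0(X_t, \OO_X(mM_{X_t}))$ for general $t$: this forces $h^0(X_{t_0}, \OO_X(mM_{X_{t_0}})) \le n_0$, hence equality, at every $t_0 \in T$.

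To obtain that inequality I would apply the relative extension theorem of \S6 with $S = X_{t_0}$, after checking its hypotheses. The fibre $X_{t_0}$ is an irreducible normal effective Cartier divisor (the pull-back of a point of the smooth curve $T$); its normal bundle in $X$ is trivial, so adjunction gives $\OO_X(M)\vert_{X_{t_0}} \cong \OO_{X_{t_0}}(k(K_{X_{t_0}} + A_{X_{t_0}} + \Delta_{X_{t_0}}))$ and $M_{X_{t_0}}$ has the required form; the fact that $A$ is $\pi$-big and nef ensures that $X_{t_0}$ is not contained in the relevant (relative) augmented base locus; and the conditions that $(X, X_{t_0}+\Delta)$ be plt, that $M$ be pseudo-effective, and the two conditions on $\BB_{-}(M)$ and $\BB_{-}(M_{X_{t_0}})$ are precisely those required. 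The relative extension theorem then gives, for each $m\ge1$, surjectivity of $H^0(\pi^{-1}(U), \OO_X(mM)) \to H^0(X_{t_0}, \OO_X(mM_{X_{t_0}}))$ over a suitable (affine) neighbourhood $U$ of $t_0$.

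The rest of part~(i) is bookkeeping with cohomology and base change. Write $\FF := \pi_*\OO_X(mM)$, a locally free sheaf on $T$; the base-change map $\FF\otimes k(t) \to H^0(X_t, \OO_X(mM_{X_t}))$ is an isomorphism for general $t$, since on the dense open locus where $h^0$ attains the value $n_0$ this follows from Grauert's theorem. Lifting a basis $s_1,\dots,s_N$ of $H^0(X_{t_0}, \OO_X(mM_{X_{t_0}}))$ to sections $\tilde s_1,\dots,\tilde s_N \in H^0(\pi^{-1}(U), \OO_X(mM)) = H^0(U,\FF)$, the induced map $\OO_U^{\oplus N}\to\FF\vert_U$ becomes injective after $\otimes\,k(t_0)$ --- its composite with the base-change map carries the $\tilde s_i$ to the linearly independent $s_i$ --- hence, by lower semicontinuity of the rank, it stays injective after $\otimes\,k(t)$ for general $t$; composing with the base-change isomorphism there shows that $\tilde s_1\vert_{X_t},\dots,\tilde s_N\vert_{X_t}$ are linearly independent, so $h^0(X_t,\OO_X(mM_{X_t})) \ge N$. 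This proves~(i); the $\QQ$-linear strengthening when $X$ is smooth is inherited directly from the corresponding improvement in the relative form of Theorem~\ref{extension0}.

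For part~(ii), where $(X_t, k\Delta_{X_t})$ is klt for all $t$ and the pseudo-effectivity of $M$ is to be dropped, I would use the continuity of the volume function: over the locus of general fibres the class of $M_{X_t}$ varies continuously, so $t \mapsto \vol(X_t, M_{X_t})$ is continuous there and the set of $t$ for which $M_{X_t}$ is big is open; over it $M$ is $\pi$-big, the hypotheses of the relative extension theorem hold with no global pseudo-effectivity assumption, and the argument of part~(i) goes through. Over the complementary locus --- a finite subset of $T$, or in the degenerate case all of $T$ --- one cannot assume $M$ pseudo-effective, and instead one runs the relative form of the Takayama-type argument sketched at the beginning of \S5: when $(X_t, k\Delta_{X_t})$ is klt the extension of sections from $X_{t_0}$ can be obtained working directly with the adjoint ideal, bypassing the asymptotic multiplier-ideal construction, which is the only place where the proof of Theorem~\ref{extension0} uses pseudo-effectivity. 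I expect this last step to be the main obstacle: carrying the adjoint-ideal argument through uniformly in $t$, and reconciling the behaviour of $\BB_{-}(M_{X_t})$ and of $\vol(X_t, M_{X_t})$ across the special fibres where $M_{X_t}$ degenerates --- which is exactly the point at which continuity of the volume function, and not only the extension theorem, is indispensable.
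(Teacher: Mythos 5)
Your part (i) is fine and is essentially the paper's argument: reduce to the surjectivity of $H^0(\pi^{-1}(U),\OO_X(mM))\to H^0(X_{t_0},\OO_{X_{t_0}}(mM_{X_{t_0}}))$ over an affine neighbourhood and apply the relative extension theorem of \S 6, the semicontinuity/base-change bookkeeping being standard. The genuine gap is in part (ii), and it is exactly the step you yourself flag as the ``main obstacle''. The critical case is when $M_{X_{t_0}}$ is pseudo-effective (otherwise all the groups vanish nearby and there is nothing to prove) but the restriction of $M$ to the \emph{general} fibre is not known to be pseudo-effective; your dichotomy by the locus where $M_{X_t}$ is big does not reach this case, and your fallback --- a ``relative Takayama-type argument working directly with the adjoint ideal'' --- does not exist for $k\ge 2$. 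Pseudo-effectivity of $M$ is not used in the proof of Theorem \ref{extension0} merely to set up the asymptotic multiplier ideals: it is what makes $mM+H$ and the intermediate divisors $N=mM+H+(j)(K_X+S)+\Delta_1+\cdots+\Delta_j$ big, so that the vanishing theorems behind Basic Lifting apply at each of the $k$ sub-steps. The klt hypothesis on $(X_t,k\Delta_{X_t})$ only trivializes the transversality conditions on $\BB_{-}(M)$ and $\BB_{-}(M_{X_t})$ (no relevant non-canonical centres can come from $[k\Delta]$); it does nothing to supply the needed bigness, and Takayama's argument genuinely uses $k=1$. Moreover your assertion that $t\mapsto\vol(M_{X_t})$ is continuous in $t$, so that the big locus is open in $T$, is unjustified and close to circular: invariance of volumes along the family is an \emph{output} of the theorem, not an input. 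The continuity actually used in the paper is continuity of the volume function on the N\'eron--Severi space of a \emph{fixed} fibre.

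The missing idea is the following reduction: show that if $M_{X_{t_0}}$ is pseudo-effective then $M$ is automatically $\pi$-pseudo-effective (after shrinking $T$, which also disposes of the pseudo non-canonical centres of $\Delta$ disjoint from $X_{t_0}$, since $\Delta$ is horizontal); then Theorem \ref{relative} applies as in part (i). This is proved by contradiction. If $M_{X_t}$ were not pseudo-effective for general $t$, choose a $\pi$-ample $\QQ$-divisor $B$ such that $M_{X_t}+B_{X_t}$ is big for general $t$ but lies arbitrarily close to the boundary of the pseudo-effective cone of $X_t$, so that $0<\vol(M_{X_t}+B_{X_t})\ll 1$ by continuity of $\vol$ on $N^1(X_t)_{\RR}$. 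The divisor $M+B$ is $\pi$-big and, thanks to the klt hypothesis, satisfies all hypotheses of Theorem \ref{relative}; the part (i) argument then gives $\vol(M_{X_{t_0}}+B_{X_{t_0}})=\vol(M_{X_t}+B_{X_t})$ for general $t$. But pseudo-effectivity of $M_{X_{t_0}}$ forces $\vol(M_{X_{t_0}}+B_{X_{t_0}})\ge\vol(B_{X_{t_0}})$, which is bounded below by a fixed positive constant because $B$ cannot be taken arbitrarily small when the general $M_{X_t}$ is outside the pseudo-effective cone --- a contradiction. Without this step your proof of (ii) does not close.
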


This has consequences (cf. Corollary \ref{pseudoeffective} and Corollary \ref{boundary}) to the invariance of pseudo-effectivity in families, hence of the Kodaira energy and of part of the boundary of the pseudo-effective cone,  for $\QQ$-divisors of the form treated here. This is analogous to the fact that Siu's original extension results give the invariance of the Kodaira energy for semipositive divisors.
Theorem \ref{fibration} was recently used by Totaro \cite{totaro} in proving that the Cox ring deforms in a flat family under a deformation of a terminal Fano which is $\QQ$-factorial in codimension 3.

The most beautiful recent application of extension theorems has been to the problem of finite generation and of existence of flips 
(cf. Hacon-M$^c$Kernan \cite{hm2}, \cite{hm3}). In \S7 we show how a quick argument based on Theorem \ref{extension0}, on Takayama's technique, and still of course on ideas of Hacon-M$^c$Kernan, gives part of the finite generation statements found in the recent \cite{hm3}, needed for the existence of flips. Namely, in order to have finite generation one only needs to know that certain asymptotic vanishing orders are rational, but not necessarily achieved.\footnote{To further deal with this rationality issue, Hacon and M$^c$Kernan use diophantine approximation and an inductive MMP argument based on \cite{bchm}.} Suffice it to say here that these methods imply that without
any transversality assumptions as in Theorem \ref{extension0}, one can still lift all sections vanishing to sufficiently high order along the components of $\Delta$. The key statement is, loosely speaking (essentially 
after reductions to the smooth simple normal crossings case): \emph{Say $M$ is as in Theorem \ref{extension0}, with $A$ ample, and denote
$\delta_i : =  {\rm ord}_{F_i} (\Delta_S)$ and $c_i :=  \ord_{F_i} \parallel \frac{M}{k} \parallel_S$ (the asymptotic order of vanishing of the ``restricted" linear series of sections coming from $X$), where 
$F_i$ are all the intersections of components of the support of $\Delta$ with $S$.
Then, assuming that $S \not\subseteq \BB(M)$, but without any assumptions on $\BB_{-} (M)$ or $\BB_{-} (M_S)$, all sections in $H^0 (S, \OO_S(mM_S))$ vanishing to order at least ${\rm min}\{\delta_i, (1-\epsilon)c_i\}$ along $F_i$ can be lifted to $X$, where $\epsilon$ is any real number such that
$A  + mk\epsilon (K_X + S + \Delta)$ is ample.} 
For precise statements cf. \S7, especially Theorem \ref{rational}.

The proofs are based on a systematic study of certain \emph{adjoint ideals}, and of their asymptotic counterparts, defined in \S2 and \S3 respectively. They are a mix between the multiplier ideals associated to effective $\QQ$-divisors (cf. \cite{positivity}, Part III) and the adjoint ideals associated to reduced irreducible (and in fact, more generally, simple normal crossings) effective divisors (cf. \cite{positivity} 9.3.E). These ideals have certainly appeared in literature in various forms,
and especially in \cite{hm1} in the form used here. Without any claim to originality regarding the definition, we develop a general inductive method for understanding their vanishing and extension properties. This should hopefully be useful in many other situations. 

The main strategy in this picture originates of course in Siu's fundamental work. The use of adjoint ideals for extension problems in this paper is inspired by, and at times very similar to, techniques in \cite{kawamata} and \cite{hm1} using asymptotic multiplier ideals, but differs somewhat from the approach in \cite{takayama} (following Siu), where sections are lifted individually.
We prove extension after passing to a suitable birational model, following ideas of 
Hacon-M$^c$Kernan in their proof of the existence of flips \cite{hm2}. In the recent \cite{hm3} the authors provide a new proof, based on improved extension techniques which certainly have some overlap with our results. Finally, similar extension techniques are used in the recent papers
of Berndtsson-P\u aun \cite{bp} on subadjunction and of P\u aun \cite{paun2} on non-vanishing.
For the benefit of the reader, an outline of the method is given before the proof of Theorem 
\ref{extension0} in \S5.

\noindent
{\bf Acknowledgements.} 
We would like to thank Ch. Hacon, R. Lazarsfeld, V. Lazic, J. M$^c$Kernan, M. Musta\c t\u a and  M. P\u aun for interesting discussions. We are especially grateful to R. Lazarsfeld and M. Musta\c t\u a for suggestions related to Theorem \ref{fibration}.

\section{Adjoint ideals}

Let $X$ be a smooth projective complex variety. We study formally a notion of \emph{adjoint ideal} on $X$, which is roughly speaking a combination of a multiplier ideal attached to an ideal sheaf and an adjoint ideal attached to a simple normal crossings divisor. It  appears to some extent in various places in the literature, but the form we adopt here is that of \cite{hm1}. For the general theory of multiplier ideal sheaves, including asymptotic constructions, we refer the reader to \cite{positivity}, Chapters $9$
and $11$.

\subsection*{Restriction to an irreducible divisor.}
To explain how this works, we start with the simplest (and quite well-known) case, which is particularly transparent: let $\fra \subseteq \OO_X$ be an ideal sheaf, and $D \subset X$ a smooth integral divisor such that $D \not \subset Z(\fra)$. Fix also $\lambda \in \QQ$. 

Let $f: Y \rightarrow X$ be a common log-resolution for the pair $(X, D)$ and the ideal $\fra$, and write $\fra \cdot \OO_{Y} = \OO_{Y} (-E)$. By construction the union of $E$ and the proper transform $\tilde D$ is in simple normal crossings. 

\begin{definition}
The \emph{adjoint ideal} associated to the data $(X, D, \fra, \lambda)$ is 
$$\Adj_D ( X, \fra^{\lambda}) : = f_* \OO_{Y} (K_{Y / X} - f^*D + \tilde D - [\lambda E]).$$
One can check as for multiplier ideals (cf. \cite{positivity}, Theorem 9.2.18) that this definition is independent on the choice of log-resolution. It is clear that 
$$\Adj_D ( X, \fra^{\lambda})\subset \JJ (X, \fra ^{\lambda}),$$ 
the multiplier ideal associated to $\fra$ and $\lambda$, so in particular it is an ideal sheaf. 
\end{definition}

\noindent
On the log-resolution $Y$ we have the standard exact sequence
$$0\longrightarrow \OO_Y(K_{Y /X} - f^*D - [\lambda E]) \longrightarrow \OO_Y(K_{Y /X} - f^*D - [\lambda E] + \tilde D) \longrightarrow $$
$$ \longrightarrow \OO_Y(K_{Y /X} - f^*D - [\lambda E] + \tilde D)_{|\tilde D} \longrightarrow 0.$$
We have the following: 
\begin{itemize}
\item  $f_* \OO_Y(K_{Y /X} - f^*D - [\lambda E]) \cong \JJ(X, \fra^{\lambda}) (-D)$ by the projection formula and the definition of multiplier ideals. 
\item $R^i f_* \OO_Y(K_{Y /X} - f^*D - [\lambda E]) = 0, ~\forall i>0$, by Local Vanishing (cf. \cite{positivity} 9.4.4).
\item $\OO_Y( -[\lambda E]) _{|\tilde D} \cong \OO_{\tilde D} (- [\lambda(E\cap \tilde D)])$, since 
$E \cup \tilde D$ is in simple normal crossings. 
\end{itemize}
Using these facts and adjunction, by push-forward we obtain a basic exact sequence
\begin{equation}\label{sequence_1}
0 \longrightarrow \JJ(X, \fra^{\lambda}) (-D) \longrightarrow \Adj_D ( X, \fra^{\lambda}) \longrightarrow \JJ (D, (\fra \cdot \OO_D)^{\lambda})\longrightarrow 0.
\end{equation}
Note that Local Vanishing also applies to the right hand side by base-change, so we have its version for the adjoint ideal as well: 
$$R^i f_* \OO_Y(K_{Y /X} - f^*D - [\lambda E] + \tilde D) = 0, ~\forall i>0.$$
Finally the sequence (\ref{sequence_1}), together with Nadel Vanishing, provides 
a vanishing theorem for the adjoint ideal, namely
$$H^i (X, \OO_X(K_X + L + D) \otimes  \Adj_D ( X, \fra^{\lambda})) = 0, ~\forall i>0,$$
where $L$ is any line bundle on $X$ such that $L- \fra^{\lambda}$ big and nef, and $D\not\subset \BB_{+} (L - \fra^{\lambda})$ (cf. 
Definition \ref{variation}).

\begin{remark}\label{divisors}
These constructions and results have, as usual, variants involving $\QQ$-divisors as opposed to 
ideals. Let $D$ be a smooth integral divisor on $X$, and let $B$ be an effective $\QQ$-divisor such that $D \not \subset {\rm Supp} (B)$. Then we can define
$$\Adj_D ( X,  B) := f_* \OO_{Y} (K_{Y / X} - f^*D + \tilde D - [f^* B]),$$
where $f:Y \rightarrow X$ is a log-resolution for $(X, D+ B)$.
This sits in an exact sequence\footnote{This sequence is a small variation of the exact sequence describing the adjoint 
ideal of a divisor in \cite{el2} (cf. also \cite{positivity} 9.3.E). It was considered in \cite{takayama} as well.} 
\begin{equation}\label{sequence}
0 \longrightarrow \JJ(X, B) (-D) \longrightarrow \Adj_D ( X,  B) \longrightarrow \JJ (D, B_{|D})\longrightarrow 0
\end{equation}
and again satisfies Local Vanishing. The vanishing theorem it satisfies is:
$$H^i (X, \OO_X(K_X + L + D) \otimes  \Adj_D ( X,  B)) = 0, ~\forall i>0,$$
where $L$ is any line bundle on $X$ such that $L - B$ is big and nef and $D \not\subset \BB_{+} (L-B)$. (This last thing
means that $(L-B)_{|D}$ is still big, which is how we always make use of $\BB_{+}$.)
\end{remark}

\begin{remark}[Adjoint ideals on pairs]\label{pair_1}
As in \cite{positivity} \S9.3.G in the case of multiplier ideals, we can also make these constructions live on a pair $(X, \Lambda)$, with $\Lambda$ an effective $\QQ$-divisor. If we impose the condition that $D\not\subset B \cup \Lambda$, we can define as above 
$$\Adj_D  ((X, \Lambda); B ) := f_* \OO_{Y} (K_{Y / X} - f^*D + \tilde D - [f^*( B+ \Lambda)]).$$
Clearly since $X$ is smooth this is not something new; in fact:
$$\Adj_D ((X, \Lambda); B\big) = \Adj_D  (X, B + \Lambda).$$ 
However this helps with the notation: if $\fra$ is an ideal sheaf satisfying the conditions above, we can define similarly
$\Adj_D  ((X, \Lambda); \fra^{\lambda})$. All the results above have obvious analogues in this context.
\end{remark}

\subsection*{Restriction to a reduced simple normal crossings (SNC) divisor.}
Let $\Gamma$ be a reduced SNC divisor on $X$, and $\fra\subseteq \OO_X$ an ideal sheaf such that 
no log-canonical center of $\Gamma$ is contained in $Z(\fra)$.\footnote{Given the SNC condition, this simply means the intersections of the various components of $\Gamma$, including the components themselves.}  Let $f: Y \rightarrow X$ be a common log-resolution for the pair $(X, \Gamma)$ and the ideal $\fra$, and write $\fra \cdot \OO_{Y} = \OO_{Y} (-E)$. By construction the union of $E$ and the proper transform $\tilde \Gamma$ is SNC. 

\begin{definition}
The \emph{adjoint ideal} associated to the data $(X, \Gamma, \fra, \lambda)$ is 
$$\Adj_{\Gamma} ( X, \fra^{\lambda}) : = f_* \OO_{Y} (K_{Y / X} - f^*\Gamma 
+ \underset{{\rm ld}(\Gamma; D_i)=0}{\sum} D_i - [\lambda E]),$$
where the sum appearing in the expression is taken over all divisors on $Y$ having log-discrepancy 
$0$ with respect to $\Gamma$, i.e. among those appearing in $\Gamma^{\prime}$ in the expression $K_Y + \Gamma ^{\prime}  = f^* (K_X + \Gamma)$. Note again that
$$\Adj_{\Gamma} ( X, \fra^{\lambda})\subset \JJ(X, \fra^{\lambda}).$$
\end{definition}
The next Lemma implies that adjoint ideals are independent of the choice of log-resolution.

\begin{lemma}\label{independence}
Let $f:Y \rightarrow X$ be a log-resolution for $(X, \Gamma, \fra)$, with $\fra \cdot \OO_{Y} = \OO_{Y} (-E)$, and let $g: Z\rightarrow Y$ be another birational morphism, with $Z$ smooth, $\fra \cdot \OO_{Z} = \OO_{Z} (-E^{\prime})$, such that the proper transform of $\Gamma$, $E^{\prime}$, and the exceptional divisor of $g$ form an SNC divisor. Then 
$$f_* \OO_{Y} (K_{Y / X} - f^*\Gamma 
+ \underset{{\rm ld}(\Gamma; D_i)=0}{\sum} D_i - [\lambda E]) \cong (f\circ g)_* \OO_Z(K_{Z / X} - (f\circ g)^*\Gamma 
+ \underset{{\rm ld}(\Gamma; D_i^{\prime})=0}{\sum} D_i - [\lambda E^{\prime}]),$$
where the $D_i^{\prime}$ are the divisors on $Z$ with log-discrepancy $0$ with respect to $\Gamma$.
\end{lemma}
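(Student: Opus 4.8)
The plan is to reduce to the familiar statement that multiplier ideals are independent of the choice of log-resolution, applied simultaneously on $X$ and along each relevant center. First I would set up notation on $Z$: write $K_{Z/Y} = \sum_j a_j G_j$ with $G_j$ the $g$-exceptional divisors, and note that since $Z \to X$ factors through $Y$, we have $K_{Z/X} = g^* K_{Y/X} + K_{Z/Y}$ and $(f\circ g)^* \Gamma = g^* f^* \Gamma$. The divisors on $Z$ with log-discrepancy $0$ with respect to $\Gamma$ are precisely the proper transforms on $Z$ of the divisors $D_i$ on $Y$ with ${\rm ld}(\Gamma; D_i) = 0$, together with any $g$-exceptional $G_j$ having ${\rm ld}(\Gamma; G_j) = 0$; the point is that the latter necessarily satisfy $a_j = $ (the coefficient forced by $g^*(K_Y + \Gamma')$), so they contribute a correction term that exactly cancels. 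Concretely, I would verify the divisorial identity
$$K_{Z/X} - (f\circ g)^*\Gamma + \sum_{{\rm ld}(\Gamma; D_i')=0} D_i' - [\lambda E'] = g^*\Big(K_{Y/X} - f^*\Gamma + \sum_{{\rm ld}(\Gamma; D_i)=0} D_i - [\lambda E]\Big) + \big(\text{effective, }g\text{-exceptional}\big),$$
where the effective $g$-exceptional part has round-down zero, so that pushing forward by $g_*$ and using $g_* \OO_Z(g^* \mathcal{D} + (\text{eff. exc.})) = \OO_Y(\mathcal{D})$ gives the claim after applying $f_*$.

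The key steps, in order, are: (1) express everything on $Z$ in terms of pullbacks from $Y$ plus $g$-exceptional corrections, separating the contributions coming from $K_{Z/Y}$, from the discrepancy sum, and from the round-down of $\lambda E'$ versus $\lambda E$; (2) observe that $g$-exceptional divisors come in two types according to whether their log-discrepancy with respect to $\Gamma$ (equivalently, with respect to $f^*\Gamma$ viewed on $Y$) is $0$ or positive — for type-$0$ divisors the "$+D_i'$" term cancels the fractional part that would otherwise force a round-down contribution, exactly as in the standard comparison for $\Adj_D$ in the irreducible case; (3) for the round-down term, use that $[\lambda g^* E + (\text{integral exc.})] = g^*[\lambda E] + (\text{same integral exc.})$ together with the fact that any additional $g$-exceptional contribution to $E'$ beyond $g^* E$ is itself $g$-exceptional and nonnegative; (4) conclude that the integrand on $Z$ is $g^*(\text{integrand on }Y)$ plus an effective $g$-exceptional divisor with zero round-down, and invoke the projection formula and $R^0 g_* \OO_Z(\text{eff. exc.}) = \OO_Y$.

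The main obstacle I anticipate is the careful bookkeeping in step (2)–(3): one must check that the "$+\sum D_i$" adjoint correction interacts correctly with the relative canonical $K_{Z/Y}$ and with the round-down operation, so that type-$0$ exceptional divisors genuinely contribute coefficient exactly making the local difference an effective divisor that is not merely $g$-exceptional but has \emph{strictly} sub-integral (hence round-down-zero) coefficient — this is where the SNC hypothesis on the proper transform of $\Gamma$ together with $E'$ and $\exc(g)$ is essential, since it guarantees that $g^* E$ and the proper transform of $E$ differ only by exceptional divisors with controlled coefficients, and that no two of the relevant divisors meet badly. Once this local positivity is established, the push-forward is routine. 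A clean way to organize it is to first handle the case $\lambda = 0$ (pure adjoint ideal of $\Gamma$, which is essentially \cite{positivity} 9.3.E extended to SNC, and where the computation is the standard discrepancy comparison), then treat general $\lambda$ by absorbing $[\lambda E']$ as in the usual proof that multiplier ideals are well-defined, noting that the two round-down terms can be compared on $Z$ directly since $g$ is an isomorphism over the generic points of all components of $E$.
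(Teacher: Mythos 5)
Your overall architecture coincides with the paper's: both arguments reduce the lemma to showing that the divisor on $Z$ equals $g^*$ of the divisor on $Y$ plus an effective $g$-exceptional integral divisor, and then conclude by pushing forward. But the entire content of the lemma is the effectivity of that exceptional correction, specifically the non-negativity of its coefficient along each $g$-exceptional divisor $D_i'$ with ${\rm ld}(\Gamma;D_i')=0$ which is not a pull-back of one of the $D_i$ on $Y$ — and this is precisely the step you defer as ``the main obstacle I anticipate'' rather than prove. The paper supplies it by a genuine local computation: after decomposing $[\lambda E]=\lambda E-\{\lambda E\}$ and using integrality to reduce to the effectivity of $K_{Z/X}-[g^*\{\lambda E\}]-g^*\sum D_i+\sum D_i'$, the term $K_{Z/X}-[g^*\{\lambda E\}]$ is handled by the multiplier-ideal case (\cite{positivity}, Lemma 9.2.19), and for each non-pull-back $D_i'$ one chooses a regular system of parameters at the generic point of its center on $Y$ made of components of $E$, the relevant $D_i$, and auxiliary divisors, and uses the discrepancy estimate $k_i\ge\sum s_k+\sum r_l+\sum p_m-1$ (in the paper's notation), together with the fact that such a $D_i'$ must involve at least one component of $E$, to get a coefficient $\ge \sum s_k-[\sum\lambda_k s_k]-1\ge 0$. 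Nothing in your sketch produces this inequality, and it is not a formal consequence of the irreducible case of $\Adj_D$ as you suggest.

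Moreover, the mechanism you outline for that step is not correct as stated. The difference to be controlled is an \emph{integral} divisor, so requiring that it, or its exceptional part, have ``round-down zero'' or ``strictly sub-integral coefficients'' is vacuous (an effective integral divisor with zero round-down is zero); what is needed is simply non-negativity of integer coefficients, and $g_*\OO_Z(g^*\mathcal{D}+N)=\OO_Y(\mathcal{D})$ already holds for any effective integral $g$-exceptional $N$. Likewise, the assertion that for ld-$0$ exceptional divisors the ``$+D_i'$'' term ``exactly cancels'' the fractional contribution is not what happens for general $\lambda$: the coefficient need not vanish, and proving it is $\ge 0$ is exactly where the local estimate above enters. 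Finally, your concern in step (3) about ``additional $g$-exceptional contributions to $E'$ beyond $g^*E$'' is moot: since $f$ is a log-resolution of $\fra$, the sheaf $\fra\cdot\OO_Y=\OO_Y(-E)$ is already invertible, hence $E'=g^*E$ exactly (and any such excess would in fact enter with a minus sign, working against effectivity rather than for it). So the proposal identifies the right shape of the argument but leaves its crux unestablished, with a sketched cancellation that would not deliver it.
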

\begin{proof}
The Lemma follows if we prove that the difference between $K_{Z / X} - (f\circ g)^*\Gamma 
+ \underset{{\rm ld}(\Gamma; D_i^{\prime})=0}{\sum} D_i - [\lambda E^{\prime}]$ and 
$g^*(K_{Y / X} - f^*\Gamma 
+ \underset{{\rm ld}(\Gamma; D_i)=0}{\sum} D_i - [\lambda E])$ is effective and exceptional. Note that $g^* E = E^{\prime}$, and also that in general $g^* \underset{{\rm ld}(\Gamma; D_i)=0}{\sum} D_i \ge \underset{{\rm ld}(\Gamma; D_i^{\prime})=0}{\sum} D_i^{\prime}$. If we decompose $[\lambda E] = \lambda E - \{\lambda E\}$ and 
$[\lambda E^{\prime}] = \lambda E^{\prime} - \{\lambda E^{\prime}\}$, it follows that what we need to prove is that 
$$K_{Z/X} + \{\lambda E^{\prime}\}- g^*\{\lambda E\}  - g^* \underset{{\rm ld}(\Gamma; D_i)=0}{\sum} D_i + \underset{{\rm ld}(\Gamma; D_i^{\prime})=0}{\sum} D_i^{\prime}$$
is effective and exceptional. But since we know that the total expression is an integral 
divisor, it is enough to show that 
\begin{equation}\label{sum}
K_{Z/X} - [g^*\{\lambda E\}]  - g^* \underset{{\rm ld}(\Gamma; D_i)=0}{\sum} D_i + \underset{{\rm ld}(\Gamma; D_i^{\prime})=0}{\sum} D_i^{\prime}
\end{equation}
is effective and exceptional. Note that we know already that $K_{Z/X} - [g^*\{\lambda E\}]$ is so. (This is 
essentially  the independence of the log-resolution for the usual multiplier ideals, cf. \cite{positivity} Lemma 9.2.19.) It is clear then that the only thing we need to check 
is that the $D_i^{\prime}$ which are not pull-backs of some $D_i$ appear in this sum with non-negative coefficient. 

To this end, fix one such $D_i^{\prime}$. It is $g$-exceptional, and denote by $Z_i^{\prime}$ its center on $Y$, i.e. $Z_i^{\prime} := g(D_i^{\prime}) \subset Y$, say of 
codimension $d$. Denote by $E_1, \ldots, E_a$ the components of $E$, and by $D_1, \ldots, D_b$ the $D_i$'s containing $Z_i^{\prime}$. Since everything is SNC, we can also 
choose $F_1, \ldots, F_c$ prime divisors, with $a + b + c = d$, such that  $E_1, \ldots, 
E_a, D_1, \ldots, D_b, F_1, \ldots, F_c$ form a system of parameters at the generic point of $Z_i^{\prime}$. Choose local equations $x_k$ for $E_k$, $y_l$ for $D_l$, and 
$z_m$ for $F_m$, at this generic point. Denote also by $(t=0)$ the local equation of 
$F_i^{\prime}$. 

By our choice, there exist positive integers $s_k$, $r_l$ and $p_m$, and invertible elements $u_k$, $v_l$ and $w_m$, such that at the generic point we have 
$$x_k = t^{s_k}\cdot u_k, ~~y_l = t^{r_l}\cdot v_l, {\rm~and~} z_m = t^{p_m}\cdot w_m.$$
Denoting by $k_i$ the coefficient of $F_i^{\prime}$ in $K_{Z/X}$, a local calculation precisely as in the proof of \cite{positivity} Lemma 9.2.19, shows immediately that
$$k_i \ge \sum s_k + \sum r_l + \sum p_m - 1.$$ 
Going back to $(\ref{sum})$, we see that the coefficient of $D_i^{\prime}$ there is then 
at least 
$$\sum s_k - [\sum \lambda_ks_k]  -1,$$
where $0 \le \lambda_k <1$ are the coefficients corresponding to the $E_k$ that appear in the sum. Since we are assuming that $D_i^{\prime}$ is not the pull-back of some $D_i$, there is at least one $E_k$ in the sum, so the expression is non-negative.
\end{proof}

Adjoint ideals can be studied inductively, based on the number of components of $\Gamma$. To this end, let $S\subset X$ be a smooth divisor such that $\Gamma + S$ is also SNC, and assume that no log-canonical center of $\Gamma + S$ is contained in $Z(\fra)$. 

\begin{proposition}\label{basic_sequence_1}
With the notation above, there is a short exact sequence of ideal sheaves
\begin{equation}\label{induction}
0 \longrightarrow \Adj_{\Gamma} ( X, \fra^{\lambda}) \otimes \OO_X(-S) \longrightarrow 
\Adj_{\Gamma + S}  (X, \fra^{\lambda}) \longrightarrow 
\Adj_{\Gamma_S}  (S, \fra^{\lambda}\cdot \OO_S)\longrightarrow 0.
\end{equation}
\end{proposition}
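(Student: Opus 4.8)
The plan is to realize all three adjoint ideals in \eqref{induction} via a single common log-resolution and then produce the sequence as the pushforward of an elementary short exact sequence of line bundles on that resolution. Concretely, first I would choose $f : Y \to X$ to be a common log-resolution for the pair $(X, \Gamma + S)$ and the ideal $\fra$, writing $\fra \cdot \OO_Y = \OO_Y(-E)$, so that the proper transform $\tilde\Gamma + \tilde S$ together with $E$ and the exceptional locus is SNC; by Lemma \ref{independence} this computes all three adjoint ideals (for the last one, note that $\tilde S$ is a log-resolution over $S$ of $(S, \Gamma_S)$ and $\fra\cdot\OO_S$, and the restriction $E_{|\tilde S}$ is the pullback of $\fra\cdot\OO_S$). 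Write $P := K_{Y/X} - f^*\Gamma + \sum_{{\rm ld}(\Gamma; D_i)=0} D_i - [\lambda E]$ for the divisor defining $\Adj_\Gamma(X,\fra^\lambda)$, so that $P - f^*S + \tilde S$ is the divisor defining $\Adj_{\Gamma+S}(X,\fra^\lambda)$ — here one must check that the divisors of log-discrepancy $0$ with respect to $\Gamma + S$ are exactly those for $\Gamma$, together with $\tilde S$ and the divisors over $S \cap \Gamma$, and that adjunction identifies these last with the $\Gamma_S$-log-discrepancy-$0$ divisors on $\tilde S$.

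Next I would write down on $Y$ the tautological exact sequence
\begin{equation*}
0 \longrightarrow \OO_Y(P - f^*S) \longrightarrow \OO_Y(P - f^*S + \tilde S) \longrightarrow \OO_{\tilde S}\big((P - f^*S + \tilde S)_{|\tilde S}\big) \longrightarrow 0,
\end{equation*}
exactly as in the irreducible-divisor case \eqref{sequence_1}. Pushing forward by $f$, the left term gives $\Adj_\Gamma(X,\fra^\lambda) \otimes \OO_X(-S)$ by the projection formula (since $f^*S$ contributes $\OO_X(-S)$ and $f_*\OO_Y(P) = \Adj_\Gamma(X,\fra^\lambda)$), and the middle term gives $\Adj_{\Gamma+S}(X,\fra^\lambda)$ by definition. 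For exactness on the right I would invoke the Local Vanishing statement for the adjoint ideal $\Adj_\Gamma$, i.e. $R^1 f_* \OO_Y(P) = 0$; this is the analogue noted after \eqref{sequence_1} and follows from ordinary Local Vanishing by the usual base-change/splitting-off argument on the SNC divisor $\sum D_i$. This kills the connecting map $R^1f_*$ and yields a short exact sequence with cokernel $f_*\OO_{\tilde S}\big((P - f^*S + \tilde S)_{|\tilde S}\big)$.

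The remaining task is to identify this cokernel with $\Adj_{\Gamma_S}(S, (\fra\cdot\OO_S)^\lambda)$. Using adjunction $(\tilde S)_{|\tilde S} = K_{\tilde S} - K_Y|_{\tilde S}$ one rewrites $(P - f^*S + \tilde S)_{|\tilde S}$ as $K_{\tilde S/S} - (f_{|\tilde S})^*\Gamma_S + \sum_j D'_j - [\lambda E_{|\tilde S}]$, where the $D'_j$ are precisely the divisors on $\tilde S$ with log-discrepancy $0$ with respect to $\Gamma_S$; the identification of the restricted boundary terms is where the SNC hypothesis on $\Gamma + S$ is essential, since it guarantees that restriction to $\tilde S$ commutes with taking round-downs ($\OO_Y(-[\lambda E])_{|\tilde S} \cong \OO_{\tilde S}(-[\lambda E_{|\tilde S}])$) and that $\sum D_i$ (for $\Gamma + S$) restricts to $\sum D'_j$ (for $\Gamma_S$). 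Then $f_{|\tilde S} : \tilde S \to S$ is a valid log-resolution and, by Lemma \ref{independence} applied on $S$, its pushforward is $\Adj_{\Gamma_S}(S, (\fra\cdot\OO_S)^\lambda)$, completing the proof.

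I expect the main obstacle to be the bookkeeping of the boundary divisors under restriction and adjunction: one must carefully match the set $\{D_i : {\rm ld}(\Gamma + S; D_i) = 0\}$ on $Y$ with $\{\tilde S\} \cup \{$divisors over components of $\Gamma \cap S\}$, and then check that restricting the latter to $\tilde S$ produces exactly the log-discrepancy-$0$ divisors for the pair $(S, \Gamma_S)$, with the round-down of $\lambda E$ behaving compatibly. Apart from this, the argument is a direct transcription of the irreducible case in \eqref{sequence_1}, and the only input beyond elementary homological algebra is Local Vanishing for $\Adj_\Gamma$.
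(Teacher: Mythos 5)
Your overall strategy --- push the restriction-to-$\tilde S$ sequence forward from a common log-resolution and kill the $R^1$ with Local Vanishing --- is the same as the paper's, but as written there are two genuine gaps. First, you work on an arbitrary common log-resolution of $(X,\Gamma+S,\fra)$, and there your identification of the middle term fails: on a general resolution there exist exceptional divisors with log-discrepancy $0$ for $\Gamma+S$ which are neither $\tilde S$ nor log-discrepancy-$0$ for $\Gamma$ (for instance the exceptional divisor of the blow-up of a component of $S\cap\Gamma_i$ has log-discrepancy $1+1-2=0$). Such divisors belong to the defining divisor of $\Adj_{\Gamma+S}(X,\fra^\lambda)$, so $f_*\OO_Y(P-f^*S+\tilde S)$ is a priori only contained in $\Adj_{\Gamma+S}(X,\fra^\lambda)$, not equal to it ``by definition''; and if you instead use the correct defining divisor, the extra terms pollute both the left-hand pushforward and the restriction to $\tilde S$. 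You flag exactly this bookkeeping as the main obstacle but do not resolve it. The paper's resolution is the point you skip: by Lemma \ref{independence} one may choose a special (``canonical'') log-resolution, obtained by blowing up only centers not containing any log-canonical center of $(X,\Gamma+S)$, so that the only log-discrepancy-$0$ divisors upstairs --- for $\Gamma$, for $\Gamma+S$, and, after restricting to $\tilde S$, for $\Gamma_S$ --- are the proper transforms of the respective components; only on such a resolution do your identifications of all three terms hold as stated.

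Second, you invoke $R^1f_*\OO_Y(P)=0$, i.e.\ Local Vanishing for $\Adj_\Gamma$ with $\Gamma$ reducible, as if it were already available (``the usual base-change/splitting-off argument''). It is not: at this stage Local Vanishing is known only for multiplier ideals and for the one-component case treated after (\ref{sequence_1}), and the natural proof for reducible $\Gamma$ runs through the very exact sequence you are trying to establish, applied to $\Gamma$ with one fewer component. This is why the paper proves the Proposition and Local Vanishing for adjoint ideals simultaneously, by induction on the number of components of $\Gamma$ (base case: multiplier ideals), first on the canonical resolution and only afterwards for arbitrary resolutions. Your argument can be repaired by organizing it as such an induction, but as written the key vanishing is asserted rather than proved, and the inductive structure that constitutes the substance of the paper's proof is missing.
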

\begin{proof}
By Lemma \ref{independence}, the definition of adjoint ideals is independent of the choice of log-resolution. 
Given the assumption in the definition, we can form a 
log-resolution by blowing up smooth centers not containing any of the log-canonical 
centers of $(X, \Gamma + S)$, so that upstairs the only divisors with log-discrepancy $0$ 
with respect to $\Gamma$ or $\Gamma + S$ are the components of their proper transforms.
We first fix such a resolution $f:Y \rightarrow X$ for $(X, \Gamma+ S, \fra)$ in the argument.\footnote{Such a log-resolution is called a \emph{canonical} resolution in \cite{hm1}.} We make the following \emph{claim} (Local Vanishing for 
adjoint ideals):
\begin{equation}\label{loc_van}
R^i f_* \OO_{Y} (K_{Y / X} - f^*\Gamma 
+ \underset{{\rm ld}(\Gamma; D_i)=0}{\sum} D_i - [\lambda E])= 0, {\rm ~for~all~} i>0.
\end{equation}

We prove both the claim and the statement of the Proposition at the same time, by induction on the number of components of $\Gamma$. If $\Gamma$ is irreducible, then ($\ref{loc_van}$) follows from Local Vanishing for multiplier ideals, as explained in the previous subsection. This means that by induction we can always assume that we have Local Vanishing for the two extremes in ($\ref{induction}$). 

Denote $\fra\cdot \OO_Y = \OO_Y(-E)$. On $Y$, the sum of divisors which have log-discrepancy $0$ with respect to $\Gamma + S$ is 
$\tilde S + \underset{{\rm ld}(\Gamma; D_i)=0}{\sum} D_i$. We can then form an exact sequence as follows: 
$$0 \rightarrow \OO_Y( K_{Y/X} - [\lambda E] - f^*\Gamma - f^*S ~ + \underset{{\rm ld}(\Gamma; D_i)=0}{\sum} D_i) \rightarrow 
\OO_Y( K_{Y/X} - [\lambda E] - f^*(\Gamma + S) + \tilde S + \underset{{\rm ld}(\Gamma; D_i)=0}{\sum} D_i)$$
$$ \longrightarrow 
\OO_{\tilde S}(K_{Y/X} - [\lambda E] - f^*\Gamma - f^*S + \tilde S + \underset{{\rm ld}(\Gamma; D_i)=0}{\sum} D_i) \longrightarrow 0.$$
Since $E$ and $\tilde S$ are in simple normal crossings, we
have that $[\lambda E]_{\tilde S} = [\lambda E_{\tilde S}]$. Furthermore $\tilde S$ is a common log-resolution for $\Gamma_S$ and $\fra\cdot \OO_S$, and by adjunction 
$$(K_{Y/X} - f^*S + \tilde S)_{|\tilde S} \cong K_{\tilde S /S}.$$ 
Therefore the terms in the exact sequence above push forward to the terms of the sequence appearing in ($\ref{induction}$), by definition. Now the statement in ($\ref{induction}$) follows by pushing forward via $f$ and applying ($\ref{loc_van}$), which we assumed to know inductively for $\Gamma$ and $\Gamma_S$. This also implies local vanishing for the middle term, so it proves ($\ref{loc_van}$) too for 
this special resolution.

On the other hand, knowing ($\ref{induction}$) we can deduce ($\ref{loc_van}$) by the same inductive argument for any resolution, since the base case is Local Vanishing for multiplier ideals, known to hold independently of resolution. 
\end{proof}

\begin{remark}\label{pair_2}
As in Remark \ref{pair_1}, the entire discussion above goes through for adjoint ideals defined on a pair. Indeed, if $(X, \Lambda)$ is a pair with $\Lambda$ an  effective $\QQ$-divisor such that no log-canonical center of $\Gamma$ is contained in $Z(\fra) \cup {\rm Supp}(\Lambda)$, then we can define 
$\Adj_{\Gamma} ((X,\Lambda); \fra^{\lambda})$
and an obvious analogue of Proposition \ref{basic_sequence_1} holds.
\end{remark}

\begin{definition}\label{variation}
Let $D$ be a $\QQ$-divisor and $\fra$ an ideal sheaf on $X$, and let $\lambda\in \QQ$. We say that $D - \fra^{\lambda}$ is nef and/or big 
if the following holds: consider the blow-up $f: Y \rightarrow X$ along $\fra$, and denote $\fra\cdot \OO_Y = \OO_Y(-E)$. Then 
$f^* L - \lambda E$ is nef and/or big. The is easily seen to be equivalent to the same condition on any log-resolution of $(X, \fra)$.
We define in a similar way $\BB_{+} (D - \fra^{\lambda})$.
\end{definition}

\begin{theorem}[Vanishing for adjoint ideals]\label{adjoint_nadel}
With the same notation, let $L$ be a line bundle on $X$ such that $L- \Lambda -\fra^{\lambda}$ is big and nef, and no log-canonical centers of 
$(X, \Gamma)$ are contained in $\BB_{+} (L - \Lambda - \fra^{\lambda})$. Then 
$$H^i (X, \OO_X(K_X + L + \Gamma)\otimes \Adj_{\Gamma}  ((X, \Lambda); \fra^{\lambda})) = 0, 
{\rm~for~all~} i>0.$$
\end{theorem}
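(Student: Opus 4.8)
The plan is to prove this by induction on the number of components of $\Gamma$, exactly mirroring the inductive structure already set up for Proposition \ref{basic_sequence_1} and its attached Local Vanishing statement \eqref{loc_van}. The base case is when $\Gamma$ is irreducible: here the adjoint ideal sits in the basic exact sequence \eqref{sequence_1} (in its pair version, via Remark \ref{pair_1} / Remark \ref{pair_2}), namely
$$0 \longrightarrow \JJ((X,\Lambda), \fra^{\lambda})(-\Gamma) \longrightarrow \Adj_{\Gamma}((X,\Lambda);\fra^{\lambda}) \longrightarrow \JJ(\Gamma, \cdots)\longrightarrow 0,$$
and I would tensor by $\OO_X(K_X + L + \Gamma)$ and take the long exact sequence in cohomology. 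The left-hand term becomes $\OO_X(K_X + L)\otimes \JJ((X,\Lambda),\fra^{\lambda})$, whose higher cohomology vanishes by Nadel vanishing since $L - \Lambda - \fra^{\lambda}$ is big and nef. The right-hand term becomes $\OO_{\Gamma}(K_\Gamma + L_\Gamma)\otimes \JJ(\Gamma, (\Lambda+\fra^\lambda)_{|\Gamma})$ after adjunction; its higher cohomology vanishes by Nadel vanishing on $\Gamma$, which is where the hypothesis $\Gamma\not\subset \BB_+(L - \Lambda - \fra^\lambda)$ is used — this is precisely the condition ensuring $(L-\Lambda-\fra^\lambda)_{|\Gamma}$ is still big (and it remains nef by restriction). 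Squeezing between two vanishings gives $H^i(X, \OO_X(K_X+L+\Gamma)\otimes \Adj_\Gamma) = 0$ for $i>0$.

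For the inductive step, write $\Gamma = \Gamma' + S$ with $S$ a component and apply Proposition \ref{basic_sequence_1} (pair version, Remark \ref{pair_2}):
$$0 \longrightarrow \Adj_{\Gamma'}((X,\Lambda);\fra^\lambda)\otimes\OO_X(-S) \longrightarrow \Adj_{\Gamma'+S}((X,\Lambda);\fra^\lambda) \longrightarrow \Adj_{\Gamma'_S}((S,\Lambda_S);(\fra\cdot\OO_S)^\lambda)\longrightarrow 0.$$
Tensoring by $\OO_X(K_X + L + \Gamma' + S)$, the left term is $\OO_X(K_X + L + \Gamma')\otimes \Adj_{\Gamma'}((X,\Lambda);\fra^\lambda)$, which has vanishing higher cohomology by the inductive hypothesis applied to $\Gamma'$ (note $L - \Lambda - \fra^\lambda$ big and nef is unchanged, and the log-canonical centers of $\Gamma'$ are among those of $\Gamma$, so the $\BB_+$ condition is inherited). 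The quotient term, after adjunction $(K_X + \Gamma' + S)_{|S} = K_S + \Gamma'_S$, becomes $\OO_S(K_S + L_S + \Gamma'_S)\otimes \Adj_{\Gamma'_S}((S,\Lambda_S);(\fra\cdot\OO_S)^\lambda)$. To kill its higher cohomology by the inductive hypothesis on $S$, I need: $L_S - \Lambda_S - (\fra\cdot\OO_S)^\lambda$ big and nef, and no log-canonical center of $\Gamma'_S$ contained in $\BB_+$ of it. Bigness and nefness restrict fine since $S$ is not contained in $\BB_+(L-\Lambda-\fra^\lambda)$; and the log-canonical centers of $\Gamma'_S$ inside $S$ are exactly the intersections with $S$ of the log-canonical centers of $\Gamma = \Gamma'+S$, so the hypothesis that none of those is in $\BB_+(L-\Lambda-\fra^\lambda)$ transfers, using compatibility of $\BB_+$ with restriction to $S$. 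Squeezing between the two vanishings again yields the claim for $\Gamma$.

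The main obstacle — and the step deserving genuine care rather than routine verification — is the bookkeeping around $\BB_+$ under restriction: one must check that $\BB_+((L-\Lambda-\fra^\lambda)_{|S}) \subseteq \BB_+(L - \Lambda - \fra^\lambda)\cap S$ (the relevant containment, phrased via Definition \ref{variation} on a common log-resolution), so that the hypothesis on centers of $\Gamma$ really does descend to the hypothesis needed on centers of $\Gamma'_S$ inside $S$, and likewise that the $\BB_+$ hypothesis for $\Gamma'$ on $X$ is weaker than (hence implied by) the one for $\Gamma$. One also has to be slightly careful that the resolution used is a \emph{canonical} one in the sense of the proof of Proposition \ref{basic_sequence_1}, so that the exact sequence \eqref{induction} and the attendant adjunction identities apply verbatim in the pair setting. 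Everything else — Nadel vanishing, the projection formula, adjunction $K_X + S|_S = K_S$ — is standard and used exactly as in the $\Gamma$ irreducible discussion of \S2.
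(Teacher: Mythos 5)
Your proposal is correct and follows essentially the same route as the paper: induction on the number of components of $\Gamma$, with the base case given by the exact sequence (\ref{sequence_1}) (pair version) plus Nadel vanishing on $X$ and on the divisor, and the inductive step given by twisting the sequence of Proposition \ref{basic_sequence_1} (via Remark \ref{pair_2}) by $\OO_X(K_X+L+\Gamma+S)$ and applying the inductive hypothesis to both outer terms, with the $\BB_{+}$ hypothesis used exactly as in Remark \ref{divisors} to guarantee that $L-\Lambda-\fra^{\lambda}$ stays big (and nef) on $S$ and on the log-canonical centers of $(S,\Gamma_S)$. Your extra care about the behaviour of $\BB_{+}$ under restriction is the same point the paper disposes of by interpreting the hypothesis directly as bigness of the restriction to each center.
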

\begin{proof}
This follows by induction on the number of components of $\Gamma$, using Proposition \ref{basic_sequence_1} and Remark \ref{pair_2}. If $\Gamma$ is irreducible, it is a consequence of the Nadel vanishing theorem for multiplier ideals, as explained in the 
previous subsection. Assume then that we have vanishing for $\Gamma$, and we want to pass to $\Gamma + S$, with $S$ smooth such that $\Gamma + S$ is SNC. We twist the exact sequence in Proposition \ref{basic_sequence_1} by $\OO_X(K_X + L + \Gamma + S)$ and pass to cohomology. If $i > 0$, the $H^i$ on the left vanish by induction. On the 
other hand, by adjunction, on the right hand side we are looking at the cohomology groups
$$H^i (S, \OO_S (K_S + L_S + \Gamma_S) \otimes \Adj_{\Gamma_S}  ((S, \Lambda_S); (\fra\cdot \OO_S)^{\lambda}) ).$$ 
But by assumption $L_S - \Lambda_S - \fra^{\lambda}\cdot \OO_S$ is big and nef, and stays big and nef on all the log-canonical centers of $(S, \Gamma_S)$. So these groups also vanish by induction.  
\end{proof}

\begin{remark}
As usual, for everything in this subsection there is a corresponding statement involving 
divisors instead of ideal sheaves. 
\end{remark}

\section{Asymptotic adjoint ideals}

In this section we define an asymptotic version of the adjoint ideals studied in the previous section. Let $X$ be a smooth projective variety, and $\Gamma\subset X$ a reduced SNC divisor. 

\noindent
{\bf Graded systems of ideals.} Consider first a graded system $\fra_{\bullet} = \{\fra_m\}$ of ideal sheaves on $X$ (cf. \cite{positivity} 11.1.B). Assume that  no log-canonical center of $\Gamma$ is contained in $Z(\fra_m)$ for $m$ sufficiently divisible. Then for any $\lambda \in \QQ$, one can define the \emph{asymptotic adjoint ideal} as
$$\Adj_{\Gamma} (X, \fra_{\bullet}^{\lambda}) := \Adj_{\Gamma} (X, \fra_{m}^{\lambda/m}), 
~{\rm for~} m {\rm~sufficiently~divisible}.$$
The correctness of the definition can be checked exactly as in the case of asymptotic multiplier ideals, cf. \cite{positivity} \S11.1.

\noindent
{\bf Complete linear series.}
Let $L$ be a line bundle on $X$ with $\kappa(L) \ge 0$, and assume that no log-canonical center of $\Gamma$ is contained in the stable base locus $\BB(L)$. For any $m \ge 1$, denote  by $\frb_m$ the base-ideal of $|mL|$, so that $\BB(L)$ is set-theoretically $Z(\frb_m)$ for $m$ sufficiently divisible. In this case $\frb_{\bullet} := \{\frb_m\}$ is a graded system, and the asymptotic adjoint ideal of $L$ is defined as 
$$\Adj_{\Gamma} (X, \parallel L\parallel) := \Adj_{\Gamma} (X, \frb_{\bullet}).$$

\noindent
{\bf Restricted linear series.}
Let now $L$ be a line bundle on $X$, with $\kappa(L) \ge 0$, and let $Y$ be a smooth 
subvariety of $X$. Define on $Y$ the ideal sheaves $\frc_m : = \frb_m\cdot \OO_Y$, where as above $\frb_m$ is the base-ideal of the linear series $|mL|$ on $X$. The ideal $\frc_m$ is the base-ideal of the linear subseries of $|mL_Y|$ consisting of  the divisors which are restrictions of divisors on $X$, and 
$\frc_{\bullet} : = \{\frc_m\}$ is a graded system. Consider also a reduced SNC divisor $\Gamma_Y$ on $Y$ such that no log-canonical center of $\Gamma_Y$ is contained in $Z(\frc_m)$ for $m$ sufficiently divisible.
We define the \emph{restricted} asymptotic adjoint ideal as 
$$\Adj_{\Gamma_Y} (Y,  \parallel L\parallel_{|Y}) := \Adj_{\Gamma_Y} (Y, \frc_{\bullet}).$$

\noindent
{\bf Pairs.} 
It is convenient to use also the language of adjoint ideals defined on pairs $(X, \Lambda)$,
rather that just on $X$. With the notation above, one can define analogously
$$\Adj_{\Gamma} ((X, \Lambda); \fra_{\bullet}^{\lambda}), ~~
\Adj_{\Gamma} ((X,\Lambda); \parallel L\parallel) 
 {\rm~~and~~} \Adj_{\Gamma_Y} ((Y,\Lambda_Y); \parallel L\parallel_{|Y}) .$$
For this definition we have to impose the condition that 
no log-canonical center of $\Gamma$ is contained in $\BB(L) \cup {\rm Supp}(\Lambda)$ (and the corresponding conditions with respect to $Y$).

\medskip 

The analogue, in the asymptotic case, of the exact sequence in Proposition \ref{basic_sequence_1} is given below. We state it for convenience in the case of linear series. The proof is an immediate modification of that argument, and we do not repeat it here.

\begin{proposition}\label{basic_sequence_2}
Let $X$ be a smooth projective variety, $L$ a line bundle on $X$ with $\kappa(L) \ge 0$, and $S$ a smooth divisor on $X$. Consider also pairs $(X, \Lambda)$ and $(X, \Gamma)$, with $\Lambda$ an effective $\QQ$-divisor and $S + \Gamma$ a reduced SNC divisor. Assume that no log-canonical center of $S + \Gamma$ is contained in $\BB(L) \cup 
{\rm Supp}(\Lambda)$. Then there is a short exact sequence of ideal sheaves
$$0 \longrightarrow \Adj_{\Gamma} ((X, \Lambda); \parallel L \parallel) \otimes \OO_X(-S) \longrightarrow 
\Adj_{\Gamma+ S} ((X, \Lambda); \parallel L \parallel) \longrightarrow $$
$$\longrightarrow 
\Adj_{\Gamma_S} ((S, \Lambda_S); \parallel L \parallel_{|S})\longrightarrow 0.$$
\end{proposition}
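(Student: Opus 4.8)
The plan is to mimic the proof of Proposition~\ref{basic_sequence_1} essentially verbatim, reducing the asymptotic statement to the non-asymptotic one by fixing a sufficiently divisible $m$. First I would recall that, by definition, $\Adj_{\Gamma}((X,\Lambda);\parallel L\parallel) = \Adj_{\Gamma}((X,\Lambda);\frb_m^{1/m})$ for $m$ sufficiently divisible, where $\frb_m$ is the base-ideal of $|mL|$, and similarly $\Adj_{\Gamma_S}((S,\Lambda_S);\parallel L\parallel_{|S}) = \Adj_{\Gamma_S}((S,\Lambda_S);(\frb_m\cdot\OO_S)^{1/m})$ for $m$ sufficiently divisible, since by definition $\frc_m = \frb_m\cdot\OO_S$. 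The key point is that all three asymptotic adjoint ideals stabilize, so we may pick a single $m$ (divisible enough) computing all of them at once. With such an $m$ fixed, the pair analogue of Proposition~\ref{basic_sequence_1} (Remark~\ref{pair_2}), applied to the ideal sheaf $\fra = \frb_m$ with $\lambda = 1/m$ and the divisor $S$, yields the short exact sequence
$$0 \longrightarrow \Adj_{\Gamma}((X,\Lambda);\frb_m^{1/m})\otimes\OO_X(-S) \longrightarrow \Adj_{\Gamma+S}((X,\Lambda);\frb_m^{1/m}) \longrightarrow \Adj_{\Gamma_S}((S,\Lambda_S);(\frb_m\cdot\OO_S)^{1/m})\longrightarrow 0,$$
and identifying each term with its asymptotic counterpart gives the desired sequence.

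The one genuine subtlety — and the step I expect to be the main (minor) obstacle — is verifying the hypotheses needed to invoke the non-asymptotic Proposition for the chosen $m$: namely that no log-canonical center of $\Gamma + S$ is contained in $Z(\frb_m)\cup{\rm Supp}(\Lambda)$, and likewise that no log-canonical center of $\Gamma_S$ is contained in $Z(\frb_m\cdot\OO_S)\cup{\rm Supp}(\Lambda_S)$. The first follows from the standing assumption that no log-canonical center of $S+\Gamma$ is contained in $\BB(L)\cup{\rm Supp}(\Lambda)$, together with the fact that $Z(\frb_m) = \BB(L)$ set-theoretically for $m$ sufficiently divisible. For the second, one must note that the log-canonical centers of $\Gamma_S$ are exactly the intersections with $S$ of the log-canonical centers of $\Gamma+S$ that meet $S$ properly (using the SNC hypothesis on $S+\Gamma$), and that $Z(\frb_m\cdot\OO_S) = Z(\frb_m)\cap S$; hence a log-canonical center of $\Gamma_S$ sitting inside $Z(\frb_m\cdot\OO_S)$ would force the corresponding center of $\Gamma+S$ to meet $\BB(L)$ inside $S$, contradicting the hypothesis. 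One also needs to arrange $m$ divisible enough that it simultaneously computes $\Adj_{\Gamma}$, $\Adj_{\Gamma+S}$, and the restricted $\Adj_{\Gamma_S}$; since each is eventually constant along a divisibility-ordered sequence, any common multiple of the three thresholds works.

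Finally, I would remark that the identification of the asymptotic restricted adjoint ideal on $S$ with the non-asymptotic one for $\frb_m\cdot\OO_S$ is exactly the content of the definition of $\Adj_{\Gamma_Y}((Y,\Lambda_Y);\parallel L\parallel_{|Y})$ given above, with $Y = S$: the graded system there is precisely $\frc_{\bullet} = \{\frb_m\cdot\OO_S\}$, so no separate argument is required. Since, as the paper states, ``the proof is an immediate modification of that argument,'' I would keep the write-up short: fix $m$ computing all three ideals, check the two containment hypotheses as above, apply Remark~\ref{pair_2}, and rewrite each term asymptotically. No new vanishing input is needed beyond what already went into Proposition~\ref{basic_sequence_1}.
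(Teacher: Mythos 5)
Your proposal is correct and is essentially the paper's intended argument: the paper omits the proof precisely because it amounts to fixing a single sufficiently divisible $m$ computing all three asymptotic ideals (possible since each stabilizes along divisibility) and invoking the pair version of Proposition~\ref{basic_sequence_1} for $\fra=\frb_m$, $\lambda=1/m$, with the lc-center hypotheses supplied by $Z(\frb_m)=\BB(L)$ set-theoretically for such $m$. The only nitpick is phrasing: a log-canonical center of $\Gamma_S$ inside $Z(\frb_m\cdot\OO_S)$ is itself an lc center of $\Gamma+S$ \emph{contained in} (not merely meeting) $\BB(L)$, which is the direct contradiction with the hypothesis.
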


\begin{theorem}[Vanishing for asymptotic adjoint ideals]\label{asymptotic_nadel}
With the same notation as above, assume that $L$ is big and that $A$ is a Cartier divisor on $X$ such that $A - \Lambda$ is nef.  If no log-canonical center of $(X, \Gamma)$ is contained in $\BB_{+} (L)$, then 
$$H^i (X, \OO_X(K_X + L  + A + \Gamma)\otimes \Adj_{\Gamma} ((X, \Lambda); \parallel L \parallel)) = 0,  {\rm~for~all~} i>0.$$
\end{theorem}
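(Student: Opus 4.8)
The plan is to mimic the proof of Theorem~\ref{adjoint_nadel}, running an induction on the number of components of $\Gamma$, but now using the asymptotic exact sequence of Proposition~\ref{basic_sequence_2} in place of the finite one. First I would unwind the asymptotic adjoint ideal: by definition $\Adj_{\Gamma}((X,\Lambda);\parallel L\parallel) = \Adj_{\Gamma}((X,\Lambda);\frb_m^{1/m})$ for $m$ sufficiently divisible, where $\frb_m$ is the base-ideal of $|mL|$. So the statement I must prove is a vanishing for the \emph{non-asymptotic} adjoint ideal $\Adj_{\Gamma}((X,\Lambda);\frb_m^{1/m})$ twisted by $\OO_X(K_X+L+A+\Gamma)$. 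The natural move is to compare with Theorem~\ref{adjoint_nadel} applied with $\fra = \frb_m$ and $\lambda = 1/m$: that theorem gives vanishing for $\OO_X(K_X + L' + \Gamma)\otimes\Adj_{\Gamma}((X,\Lambda);\frb_m^{1/m})$ provided $L' - \Lambda - \frb_m^{1/m}$ is big and nef and no log-canonical center of $(X,\Gamma)$ lies in $\BB_+(L'-\Lambda-\frb_m^{1/m})$.

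The heart of the matter is therefore the positivity bookkeeping: I need to choose $L'$ so that $L' = L + A$ (to match the desired twist $K_X + L + A + \Gamma$) while arranging that $L' - \Lambda - \frb_m^{1/m} = L + A - \Lambda - \frb_m^{1/m}$ is big and nef with the right augmented-base-locus behavior. On the blow-up $f:Y\to X$ along $\frb_m$ with $\frb_m\cdot\OO_Y = \OO_Y(-E_m)$, we have $f^*(mL) - E_m$ globally generated (it is the moving part of $|mL|$ pulled back), hence $f^*L - \frac{1}{m}E_m$ is nef; and since $L$ is big and $E_m$ is $f$-exceptional-ish, this class is also big — indeed $\BB_+(L - \frb_m^{1/m})$ is contained in $\BB_+(L)$ for $m$ sufficiently divisible, which is exactly the kind of statement proved in \cite{positivity} \S11 and \cite{elmnp}. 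Adding the nef class $A - \Lambda$ keeps bigness and nefness and does not enlarge $\BB_+$. So $L + A - \Lambda - \frb_m^{1/m}$ is big and nef, and $\BB_+(L+A-\Lambda-\frb_m^{1/m}) \subseteq \BB_+(L-\frb_m^{1/m}) \subseteq \BB_+(L)$, which by hypothesis contains no log-canonical center of $(X,\Gamma)$. Then Theorem~\ref{adjoint_nadel} applies verbatim and yields the claim.

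Alternatively, and perhaps more in the spirit of the section, one can bypass the reduction to the finite case and argue directly by induction using Proposition~\ref{basic_sequence_2}: the base case $\Gamma$ irreducible (or $\Gamma = 0$) is the asymptotic Nadel vanishing for asymptotic multiplier ideals (\cite{positivity} Theorem~11.2.12 and its pair version), using that $L$ big with $A - \Lambda$ nef gives $H^i(X,\OO_X(K_X+L+A)\otimes\JJ(\parallel L\parallel)\otimes\OO_X(-\Lambda\text{-twist})) = 0$. For the inductive step, twist the sequence of Proposition~\ref{basic_sequence_2} by $\OO_X(K_X+L+A+\Gamma+S)$; the left term $\Adj_{\Gamma}((X,\Lambda);\parallel L\parallel)\otimes\OO_X(K_X+L+A+\Gamma)$ has vanishing $H^i$ by the inductive hypothesis, and by adjunction the right term becomes $H^i(S,\OO_S(K_S+L_S+A_S+\Gamma_S)\otimes\Adj_{\Gamma_S}((S,\Lambda_S);\parallel L\parallel_{|S}))$. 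Here one must know that $L_S$ is big (which follows from $S\not\subseteq\BB_+(L)$), that $A_S - \Lambda_S$ is nef, and that no log-canonical center of $(S,\Gamma_S)$ lies in $\BB_+(L_S)$ — and crucially that the \emph{restricted} asymptotic adjoint ideal on $S$ governed by $\parallel L\parallel_{|S}$ (built from $\frc_m = \frb_m\cdot\OO_S$) is the relevant object, so one needs $\BB(\parallel L\parallel_{|S}) \subseteq \BB_+(L)_S$ to control its log-canonical centers, plus the comparison of $\BB_+$ under restriction.

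The main obstacle I anticipate is exactly this last point: controlling $\BB_+$ and stable base loci under restriction to $S$, and matching the augmented base locus of the twist $L+A-\Lambda-\frb_m^{1/m}$ on the blow-up against the hypothesis phrased purely in terms of $\BB_+(L)$ on $X$. Concretely, one needs that for $m$ sufficiently divisible $\BB_+(L - \frb_m^{1/m}) \subseteq \BB_+(L)$ and that restricting to $S$ is compatible with this — the former is standard asymptotic base-locus yoga (\cite{elmnp}, \cite{positivity} Ch.~11), and the latter requires that $S\not\subseteq\BB_+(L)$ forces $L_S$ big and $\BB_+(L_S)\subseteq\BB_+(L)_S$, which is the content of how $\BB_+$ is always used in this paper (cf.\ the parenthetical remark in Remark~\ref{divisors}). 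Once these inclusions are in hand the induction closes cleanly; everything else is a formal repetition of the argument for Theorem~\ref{adjoint_nadel}.
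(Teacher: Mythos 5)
Your ``alternative'' argument is exactly the paper's proof: the vanishing is reduced by induction on the number of components of $\Gamma$, via the exact sequence of Proposition~\ref{basic_sequence_2}, to Nadel vanishing for asymptotic multiplier ideals, and the only point the paper singles out is the one you identify --- that in the asymptotic base case $A-\Lambda$ need only be nef, by \cite{positivity} Theorem 11.2.12(ii). The caveats you raise about the restriction step (that $S\not\subseteq\BB_{+}(L)$ is what makes the restricted class big, and that the object on $S$ is governed by $\frc_m=\frb_m\cdot\OO_S$, i.e.\ by the nef class $(f^*(mL)-E_m)|_{\tilde S}$ rather than by $\|L_S\|$) are precisely what the paper's one-sentence proof silently uses, so on this route you are in complete agreement with the paper.

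Your first route (unwind $\Adj_\Gamma((X,\Lambda);\|L\|)=\Adj_\Gamma((X,\Lambda);\frb_m^{1/m})$ and quote Theorem~\ref{adjoint_nadel} with $L'=L+A$, $\fra=\frb_m$, $\lambda=1/m$) is genuinely different and can be made to work, but the step you dismiss as ``standard asymptotic base-locus yoga'' --- the inclusion $\BB_{+}(L-\frb_m^{1/m})\subseteq\BB_{+}(L)$ --- is not a quotable fact from \cite{positivity} \S11 or \cite{elmnp}, and as an inclusion of loci it is suspect: $f^*L-\tfrac1m E_m$ is nef and big on the resolution, so its augmented base locus is its null locus, which may well contain $f$-exceptional subvarieties whose images have nothing to do with $\BB_{+}(L)$. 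What you actually need, and what suffices for Theorem~\ref{adjoint_nadel}, is the statement center by center for the finitely many log-canonical centers $W$ of $(X,\Gamma)$: since $W\not\subseteq\BB_{+}(L)=\BB(L-\epsilon H)$ for a fixed ample $H$, choose $p$ and $D_p\in|p(L-\epsilon H)|$ with $W\not\subseteq\mathrm{Supp}(D_p)$; then for $m$ sufficiently divisible $\frb_m\supseteq\OO_X(-\tfrac{m}{p}D_p)$, hence $\tfrac1m E_m\le\tfrac1p f^*D_p$ and
$$f^*L-\tfrac1m E_m\sim_{\QQ}\ \epsilon f^*H+\Big(\tfrac1p f^*D_p-\tfrac1m E_m\Big),$$
with the bracket effective and not containing $\tilde W$, so the restriction to $\tilde W$ is big (this also gives the bigness of $f^*L-\tfrac1m E_m$ itself, which you asserted rather than proved); adding the nef class $f^*(A-\Lambda)$ preserves all of this, and that is exactly the form in which the $\BB_{+}$-hypothesis of Theorem~\ref{adjoint_nadel} enters its proof. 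With this substitution (and taking ``$m$ sufficiently divisible'' to accommodate both the stabilization of the ideal and these finitely many choices) your first route closes; as written, it rests on an overstated base-locus inclusion.
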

\begin{proof}
The proof is similar to that of Theorem \ref{adjoint_nadel}. The desired vanishing is reduced inductively 
(over the number of components of $\Gamma$) to Nadel vanishing for asymptotic multiplier ideals, via the 
exact sequence in Proposition \ref{basic_sequence_2}. The only significant thing to note is that in the 
asymptotic case $A - \Lambda$ can be assumed to be only nef (as opposed to nef and big). This follows from the similar fact for asymptotic multiplier ideals, \cite{positivity} Theorem 11.2.12(ii).
\end{proof}

\begin{remark}\label{no_restriction} 
In case $\Gamma = 0$, the adjoint ideal $\Adj_{\Gamma} ((X, \Lambda); \parallel L \parallel)$
defined above becomes the more familiar multiplier ideal $\JJ\big( ( X, \Lambda); \parallel L \parallel \big)$, and the exact sequence in Proposition \ref{basic_sequence_2} is simply
$$0 \longrightarrow \JJ ( (X, \Lambda); \parallel L \parallel) \otimes \OO_X(-S) \longrightarrow 
\Adj_S ((X, \Lambda);  \parallel L \parallel) \longrightarrow 
\JJ ( (S, \Lambda_S); \parallel L \parallel_{|S})\longrightarrow 0.$$
\end{remark}

We conclude this section with a technical statement on containments of ideals, which will be used later.

\begin{lemma}\label{klt_inclusion}
Let $X$ be a smooth variety. Let $\frb$ be an ideal sheaf and $\fra_{\bullet}$ a graded system of ideals on $X$.
 
\noindent
(1) If $\Lambda$ is an effective $\QQ$-divisor on $X$ such that the pair $(X, \Lambda)$ is klt, then 
$$\frb \subseteq  \JJ ( (X, \Lambda); \frb) {\rm ~and~} \fra_1 \subseteq  \JJ ( (X, \Lambda); 
\fra_{\bullet} ).$$

\noindent 
(2) If $D$ is a smooth divisor on $X$ such that the asymptotic adjoint ideals 
$\Adj_D  (X, \frb)$ and $\Adj_D  (X, \fra_{\bullet})$ are defined, then 
$$\frb \subseteq \Adj_D  (X, \frb) {\rm~and~} \fra_1 \subseteq 
\Adj_D (X, \fra_{\bullet}).$$
\end{lemma}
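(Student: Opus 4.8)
The plan is to reduce everything to the single containment $\frb \subseteq \Adj_D(X,\frb)$ for an ordinary ideal $\frb$, and to prove that by a divisor-by-divisor comparison of coefficients on a log-resolution. First I would dispose of the asymptotic statements: by definition $\Adj_D(X,\fra_\bullet) = \Adj_D(X,\fra_m^{1/m})$ for $m$ sufficiently divisible, and for such $m$ one has the standard inclusion of graded systems $(\fra_1)^m \subseteq \fra_m$, hence $\fra_1 \subseteq \Adj_D(X, \fra_m^{1/m})$ will follow once we know the inclusion for honest ideals with the rational exponent $\lambda = 1/m$; the same remark reduces $\fra_1 \subseteq \JJ((X,\Lambda); \fra_\bullet)$ to the klt-multiplier-ideal case. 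So it suffices to treat (1) and (2) for a single ideal $\frb$, with a rational exponent $\lambda \le 1$ in the asymptotic reductions (for (1) as literally stated, $\lambda = 1$ works directly).

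For (1), the inclusion $\frb \subseteq \JJ((X,\Lambda);\frb)$ is essentially \cite{positivity} Proposition 9.2.32 (or Corollary 9.2.33), which says $\fra \subseteq \JJ(X,\fra)$ always, combined with the fact that in a klt pair $K_{Y/X} - \lceil f^*\Lambda\rceil$ is effective (and exceptional) on any log-resolution, so twisting by $\OO_Y(K_{Y/X} - \lceil f^*\Lambda\rceil)$ only enlarges the pushforward. Concretely: on a common log-resolution $f: Y \to X$ of $(X, \Lambda)$ and $\frb$, with $\frb\cdot\OO_Y = \OO_Y(-E)$, one checks that $f^*\frb \cdot \OO_Y(K_{Y/X} - [\lambda E] - [f^*\Lambda]) = \OO_Y(K_{Y/X} - [\lambda E] - E - [f^*\Lambda])$; since $\lambda \le 1$ and $(X,\Lambda)$ klt, the divisor $K_{Y/X} - [\lambda E] - E - [f^*\Lambda] + E = K_{Y/X} - [\lambda E] - [f^*\Lambda]$ differs from something we want to be $\ge 0$ only by the integral rounding, and one concludes $\frb = f_*(f^*\frb) \subseteq f_*\OO_Y(K_{Y/X} - [\lambda E] - [f^*\Lambda]) = \JJ((X,\Lambda);\frb)$.

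For (2), the point is the analogous computation with the adjoint correction term $\tilde D$ present. Choose $f: Y \to X$ a common log-resolution of $(X,D)$ and $\frb$ with $\frb\cdot\OO_Y = \OO_Y(-E)$, $E \cup \tilde D$ SNC. By the projection formula, $\frb \cdot \Adj_D(X,\frb) \supseteq$ the image of $f_*(\OO_Y(-E) \otimes \OO_Y(K_{Y/X} - f^*D + \tilde D - [\lambda E])) = f_*\OO_Y(K_{Y/X} - f^*D + \tilde D - E - [\lambda E])$, and since $-f^*D + \tilde D$ equals minus the $f$-exceptional part of $f^*D$ (which is effective and exceptional only after a sign flip — care is needed here), one compares $K_{Y/X} - f^*D + \tilde D - E - [\lambda E]$ against $-E$: the difference is $K_{Y/X} + (\tilde D - f^*D) - [\lambda E] + E - [\lambda E]$... the cleanest route is simply to show $K_{Y/X} - f^*D + \tilde D - [\lambda E] \ge -E$, i.e. that $K_{Y/X} - f^*D + \tilde D + E - [\lambda E] \ge 0$; this divisor is $\ge K_{Y/X} - f^*D + \tilde D$ (as $E - [\lambda E] \ge 0$ for $\lambda \le 1$), and $K_{Y/X} - f^*D + \tilde D \ge 0$ because $f^*D - \tilde D$ is the exceptional part of $f^*D$, which is bounded above by $K_{Y/X}$ — indeed every prime exceptional divisor $F$ appearing in $f^*D$ has $\ord_F(f^*D - \tilde D) \le \ord_F K_{Y/X}$, which is where I expect to spend the real effort.

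\textbf{Expected main obstacle.} The one genuinely non-formal inequality is the comparison $\ord_F(f^*D) - \ord_F(\tilde D) \le \ord_F(K_{Y/X})$ for $F$ exceptional over $D$ (equivalently, that $K_{Y/X} - f^*D + \tilde D$ is effective), together with tracking the integral roundings so that the rounded-down divisor $K_{Y/X} - f^*D + \tilde D - [\lambda E]$ still dominates $-E$. This is the adjoint-ideal analogue of the inequality $\ord_F(K_{Y/X}) \ge \ord_F(E)$ hidden in $\fra \subseteq \JJ(X,\fra)$, and it can be extracted from a local monomial computation at the generic point of $F$ exactly as in \cite{positivity} Lemma 9.2.19 — or, more slickly, by invoking that $D$ is Cartier (so $(X,D)$ has $\Adj_D(X, \OO_X) = \OO_X$, reflecting that $D$ is a normal, in fact smooth, divisor) and that adjoint ideals only grow when the ideal argument grows. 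I would present the local computation to keep the argument self-contained, mirroring the proof of Lemma \ref{independence} above where the same type of monomial estimate already appears.
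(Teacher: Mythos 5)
Your proposal is correct and follows essentially the same route as the paper: reduce the asymptotic statements to the single-ideal case via $\fra_1^m\subseteq\fra_m$ on a common log-resolution, use that $(X,\Lambda)$ klt makes $K_{Y/X}-[f^*\Lambda]$ effective and exceptional, and use that $D$ smooth makes $K_{Y/X}-f^*D+\tilde D$ effective and exceptional (the triviality of the usual adjoint ideal of a smooth divisor, which the paper simply invokes rather than recomputing locally), then push forward $\OO_Y(-E)$. The only cosmetic slip is writing $\frb=f_*(f^*\frb)$, which should be the inclusion $\frb\subseteq f_*\OO_Y(-E)$, but this is exactly the direction you use.
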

\begin{proof}
(1) The first part is \cite{takayama} Example 2.2(2), so let us prove the second one. By definition we have that $ \JJ ( (X, \Lambda); \fra_{\bullet}) =  \JJ ( (X, \Lambda); \fra_m^{1/m})$ for $m>>0$.
Fix such an $m$, and consider a common log-resolution $f:Y \rightarrow X$ for $\Lambda$, $\fra_1$ and $\fra_m$. If we write $\fra_1\cdot \OO_Y = \OO_Y (- E_1)$ and $\fra_m \cdot \OO_Y= \OO_Y (- E_m)$, then as 
$\fra_{\bullet}$ is a graded system we have $\frac{1}{m} E_m \subseteq E_1$. On the other hand, since 
$(X, \Lambda)$ is klt, the divisor $K_{Y/X} - [f^*\Lambda]$ is exceptional and effective. This gives the inclusions 
$$\OO_Y(-E_1) \subseteq \OO_Y (K_{Y/X} - [f^*\Lambda] -  E_1) \subseteq 
\OO_Y (K_{Y/X} - [f^*\Lambda + \frac{1}{m} E_m]),$$
which by push-forward imply what we want.

\noindent
(2) We show only the first inclusion. The second one follows similarly, as in (1). Consider $f: Y \rightarrow X$ a log-resolution for $\frb$, and write $\frb\cdot \OO_Y = \OO_Y(-E)$. The condition that 
$D$ is smooth implies that $K_{Y/X} - f^*D + \tilde D$ is exceptional and effective. (This is the triviality
of the usual adjoint ideal for a normal divisor with canonical singularities.) We obtain the inclusion
$$\OO_Y(-E) \subseteq \OO_Y(K_{Y/X} - f^*D  + \tilde D - E),$$
which again by push-forward implies the result.
\end{proof}

\section{Basic lifting}

In this section we state a general result about lifting sections which vanish along appropriate adjoint ideals. We give two versions that will be used in the text, one for adjoint ideals associated to divisors, and one for asymptotic adjoint ideals associated to linear series. There are of course similar statements in all the other situations discussed above. 

\begin{proposition}[Basic Lifting I]\label{basic_lifting_1}
Let $X$ be a smooth projective variety and $S\subset X$ a smooth divisor. Let $\Lambda$ and $B$ be effective $\QQ$-divisors on $X$, and $\Gamma$ an integral divisor such that $S + \Gamma$ is a reduced SNC divisor. 
Let $L$ be a line bundle on $X$ such that $L - \Lambda - B$ is big and nef, and assume that no log-canonical center of $(X, S+\Gamma)$ is contained in 
${\rm Supp}(\Lambda + B) \cup \BB_{+} (L- \Lambda - B)$. Then the sections in
$$H^0 (S, \OO_S(K_S + L_S + \Gamma_S) \otimes \Adj_{\Gamma_S} ((S, \Lambda_S); B_S) )$$ 
are in the image of the restriction map
$$H^0(X,  \OO_X(K_X + S+ L + \Gamma) )\rightarrow H^0(S,  \OO_S (K_S + L_S + \Gamma_S) ).$$
\end{proposition}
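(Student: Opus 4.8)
The plan is to build a commutative diagram of short exact sequences coming from Proposition~\ref{basic_sequence_1} and to use the vanishing Theorem~\ref{adjoint_nadel} to kill the obstruction group. Concretely, I would first apply Proposition~\ref{basic_sequence_1} with the divisor $\Gamma$ in the role there and $S$ the smooth divisor being added, twisted appropriately. The natural twist is by $\OO_X(K_X + S + L + \Gamma)$. Since $S + \Gamma$ is reduced SNC, $(X, S+\Gamma)$ is already ``log-smooth'', so taking $\fra$ to be trivial (or rather passing to the divisorial version of the constructions, cf. Remark~\ref{divisors}, with the effective $\QQ$-divisor $B$ — and using pairs as in Remark~\ref{pair_1} to incorporate $\Lambda$), Proposition~\ref{basic_sequence_1} (in its version on pairs, Remark~\ref{pair_2}) gives the short exact sequence
\begin{equation*}
0 \to \Adj_{\Gamma}((X,\Lambda); B)\otimes \OO_X(-S) \to \Adj_{\Gamma+S}((X,\Lambda); B) \to \Adj_{\Gamma_S}((S,\Lambda_S); B_S) \to 0.
\end{equation*}

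Next I would twist this by $\OO_X(K_X + S + L + \Gamma)$ and pass to the long exact sequence in cohomology. The middle term becomes $H^0(X, \OO_X(K_X + S + L + \Gamma)\otimes \Adj_{\Gamma+S}((X,\Lambda);B))$, which maps, via adjunction $(K_X + S)_{|S} = K_S$ together with the identification of the rightmost sheaf's restriction, onto (a subsheaf-twisted) $H^0(S, \OO_S(K_S + L_S + \Gamma_S)\otimes \Adj_{\Gamma_S}((S,\Lambda_S);B_S))$. Surjectivity of this restriction follows as soon as
\begin{equation*}
H^1\big(X, \OO_X(K_X + L + \Gamma)\otimes \Adj_{\Gamma}((X,\Lambda);B)\big) = 0,
\end{equation*}
which is exactly Theorem~\ref{adjoint_nadel} applied with the line bundle $L$ there: we need $L - \Lambda - B$ big and nef and no log-canonical center of $(X,\Gamma)$ in $\BB_{+}(L-\Lambda-B)$, both of which are among the hypotheses (the hypothesis is stated for centers of $(X, S+\Gamma)$, which includes those of $(X,\Gamma)$). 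Finally, one composes with the inclusion of $\Adj$ into the structure sheaf: any section of $\OO_X(K_X+S+L+\Gamma)\otimes \Adj_{\Gamma+S}((X,\Lambda);B)$ is in particular a section of $\OO_X(K_X + S + L + \Gamma)$, so it maps into $H^0(S, \OO_S(K_S + L_S + \Gamma_S))$ as required, and this is compatible with the restriction of adjoint ideals.

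The one genuinely delicate point — the main obstacle — is the bookkeeping to make sure that, after restricting to $S$, the term $\Adj_{\Gamma_S}((S,\Lambda_S); B_S)$ is the \emph{correct} one and that the divisorial version of Proposition~\ref{basic_sequence_1} applies with $\fra$ replaced by $B$: one must check that $S \not\subset \mathrm{Supp}(B)\cup\mathrm{Supp}(\Lambda)$ and that the relevant log-canonical centers behave well, but this is guaranteed precisely by the assumption that no log-canonical center of $(X, S+\Gamma)$ lies in $\mathrm{Supp}(\Lambda+B)$ (in particular $S$ itself does not, and neither do its intersections with components of $\Gamma$). I would also want to double-check the identification $(K_{Y/X} - f^*S + \tilde S)_{|\tilde S} \cong K_{\tilde S/S}$ and the matching of $[f^*B]_{|\tilde S}$ with $[f^*(B_S)]$ on a suitable log-resolution, exactly as in the proof of Proposition~\ref{basic_sequence_1}; these are routine SNC computations but must be said. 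Modulo this, the argument is a clean two-step: exact sequence plus Nadel-type vanishing for adjoint ideals, with the inclusion $\Adj \subset \OO_X$ used at the end to land the lifted section in the full $H^0$.
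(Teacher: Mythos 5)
Your proposal is correct and follows essentially the same route as the paper: the short exact sequence of adjoint ideals from Proposition~\ref{basic_sequence_1} (in its divisorial/pair version), twisted by $\OO_X(K_X+S+L+\Gamma)$, with surjectivity on $H^0$ coming from the vanishing $H^1\big(X,\OO_X(K_X+L+\Gamma)\otimes \Adj_{\Gamma}((X,\Lambda);B)\big)=0$ supplied by Theorem~\ref{adjoint_nadel}. The extra bookkeeping you flag (adjunction on a log-resolution and the hypotheses guaranteeing the ideals are defined) is exactly what the cited results already handle, so no gap remains.
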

\begin{proof}
We use the exact sequence given by the analogue of Proposition \ref{basic_sequence_1}:
$$0\longrightarrow \Adj_{\Gamma}  ((X, \Lambda); B) \otimes \OO_X(-S) \longrightarrow \Adj_{\Gamma+ S} ((X, \Lambda); 
B)\longrightarrow $$
$$\longrightarrow \Adj_{\Gamma_S} ((S, \Lambda_S); B_S)\longrightarrow 0.$$ 
Twisting this sequence by $\OO_X(K_X + S+ L + \Gamma)$, the result follows if we show the 
surjectivity of the induced map on the right hand side at the level of $H^0$. But this is in turn implied by the vanishing
$$H^1 (X, \OO_X(K_X + L + \Gamma ) \otimes \Adj_{\Gamma} ((X, \Lambda); B)) = 0,$$
which is a consequence of Theorem  \ref{adjoint_nadel}.
\end{proof}

\begin{proposition}[Basic Lifting II]\label{basic_lifting_2}
Let $X$ be a smooth projective variety and $S\subset X$ a smooth divisor. Let $\Gamma$ be an effective integral divisor on $X$, such that $S + \Gamma$ is a reduced SNC divisor. Consider $\Lambda$ an 
effective $\QQ$-divisor, and $L$ a big line bundle on $X$, such that no log-canonical center of $(X,S + \Gamma)$ is contained in $\BB(L) \cup {\rm Supp }(\Lambda)$ and no log-canonical center of 
$(X, \Gamma)$ is contained in $\BB_{+} (L)$. If $A$ is an integral divisor on $X$ such that $A - \Lambda$ is nef, then the sections in 
$$H^0 (S, \OO_S (K_S + A_S + \Gamma_S + L_S) \otimes \Adj_{\Gamma_S} ((S, \Lambda_S); \parallel L \parallel_{|S}))$$ 
are in the image of the restriction map 
$$H^0 (X, \OO_X (K_X + S +  A + \Gamma + L))\longrightarrow  
H^0 (S, \OO_S (K_S +  A_S + \Gamma_S + L_S)).$$
\end{proposition}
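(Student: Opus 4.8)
The plan is to imitate the proof of Proposition \ref{basic_lifting_1}, replacing the adjoint ideals attached to the divisor $B$ by the asymptotic adjoint ideals attached to the linear series $\parallel L \parallel$, the exact sequence of Proposition \ref{basic_sequence_1} by its asymptotic analogue Proposition \ref{basic_sequence_2}, and the vanishing Theorem \ref{adjoint_nadel} by its asymptotic counterpart Theorem \ref{asymptotic_nadel}.

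First I would invoke Proposition \ref{basic_sequence_2}. Its hypotheses are met precisely because we have assumed that no log-canonical center of $S+\Gamma$ is contained in $\BB(L) \cup {\rm Supp}(\Lambda)$ (this is exactly what makes all three asymptotic adjoint ideals below well-defined); it yields the short exact sequence of ideal sheaves
$$0 \longrightarrow \Adj_{\Gamma}((X,\Lambda); \parallel L \parallel) \otimes \OO_X(-S) \longrightarrow \Adj_{\Gamma+S}((X,\Lambda); \parallel L \parallel) \longrightarrow \Adj_{\Gamma_S}((S,\Lambda_S); \parallel L \parallel_{|S}) \longrightarrow 0.$$
Next I would twist this sequence by the line bundle $\OO_X(K_X+S+A+\Gamma+L)$. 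Using $(K_X+S+A+\Gamma+L)-S = K_X+A+\Gamma+L$ for the left-hand term and adjunction $(K_X+S+A+\Gamma+L)_{|S} = K_S+A_S+\Gamma_S+L_S$ for the right-hand term, the resulting sequence reads
$$0 \longrightarrow \OO_X(K_X+A+\Gamma+L) \otimes \Adj_{\Gamma}((X,\Lambda); \parallel L \parallel) \longrightarrow \OO_X(K_X+S+A+\Gamma+L) \otimes \Adj_{\Gamma+S}((X,\Lambda); \parallel L \parallel) \longrightarrow \OO_S(K_S+A_S+\Gamma_S+L_S) \otimes \Adj_{\Gamma_S}((S,\Lambda_S); \parallel L \parallel_{|S}) \longrightarrow 0.$$

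Passing to cohomology, the statement follows once the map from $H^0$ of the middle term onto $H^0$ of the right-hand term is surjective: indeed the middle ideal sheaf is contained in $\OO_X(K_X+S+A+\Gamma+L)$, so any preimage of a given section is in particular an honest section of $\OO_X(K_X+S+A+\Gamma+L)$ restricting to it on $S$. That surjectivity is in turn implied by the vanishing
$$H^1\big(X, \OO_X(K_X+A+\Gamma+L) \otimes \Adj_{\Gamma}((X,\Lambda); \parallel L \parallel)\big) = 0,$$
which is exactly Theorem \ref{asymptotic_nadel} with $i=1$: $L$ is big, $A$ is an integral divisor with $A-\Lambda$ nef, and by hypothesis no log-canonical center of $(X,\Gamma)$ is contained in $\BB_{+}(L)$.

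There is essentially no obstacle here: the argument is a transcription of the proof of Proposition \ref{basic_lifting_1}. The only place the asymptotic setting genuinely enters is that Theorem \ref{asymptotic_nadel} permits $A-\Lambda$ to be merely nef rather than nef and big — which rests in turn on the corresponding property of asymptotic multiplier ideals, \cite{positivity} Theorem 11.2.12(ii) — and this is what lets us state the Proposition with no bigness assumption on $A$. The one thing to be careful about is the bookkeeping of which hypothesis feeds which input: the condition on $\BB(L) \cup {\rm Supp}(\Lambda)$ is what makes the adjoint ideals and the exact sequence of Proposition \ref{basic_sequence_2} available, while the condition on $\BB_{+}(L)$ is precisely what Theorem \ref{asymptotic_nadel} requires for the vanishing.
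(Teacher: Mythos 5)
Your proof is correct and is essentially identical to the paper's own argument: twist the exact sequence of Proposition \ref{basic_sequence_2} by $\OO_X(K_X+S+A+\Gamma+L)$ and reduce the lifting to the vanishing $H^1\big(X,\OO_X(K_X+A+\Gamma+L)\otimes \Adj_{\Gamma}((X,\Lambda);\parallel L\parallel)\big)=0$, supplied by Theorem \ref{asymptotic_nadel}. Your bookkeeping of which hypothesis feeds the exact sequence and which feeds the vanishing matches the paper's intent exactly.
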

\begin{proof}
The hypotheses imply that we can run the short exact sequence in Proposition \ref{basic_sequence_2}. 
After twisting that sequence by $\OO_X(K_X + S +  A + \Gamma + L)$, the statement follows as above from the vanishing
$$H^1 (X, \OO_X(K_X + A + \Gamma + L)\otimes \Adj_{\Gamma} ((X, \Lambda); \parallel L \parallel)) = 0,$$
which is a consequence of Theorem \ref{asymptotic_nadel}.
\end{proof}

\section{Extension theorems}

We start by addressing a small technical point. The results below are about lifting sections of 
line bundles $\OO_S(m L_S)$ (perhaps embedded in larger global sections groups), where $S \subset X$ is a smooth divisor, and $L$ is a line bundle on $X$. If there are no such section for any $m$, then the results are vacuous. Otherwise we have $\kappa (L_S) \ge 0$, and also $\kappa(L) \ge 0$ once we have lifting at any stage, hence all asymptotic multiplier ideals are well-defined.

\subsection*{A proof of Takayama's extension theorem.}
We start with a proof of Takayama's extension result. The reason for including it here is that the argument for the first step differs from the one in \cite{takayama}, and is intended to be a less technical toy version of the proof we will give for the more general statement Theorem \ref{extension0}. 

\begin{theorem}[\cite{takayama}, Theorem 4.1]\label{takayama}
Let $X$ be a smooth projective variety and $S$ a smooth divisor in $X$. Let $L$ be a Cartier divisor on $X$ such that  $L \sim_{\QQ}  A + \Delta$, where $\Delta$ is an effective $\QQ$-divisor such that $S \not\subset {\rm Supp}~\Delta$, with the pair $(S, \Delta_S)$ klt, and $A$ is a nef and big $\QQ$-divisor such that $S \not\subseteq \BB_{+} (A)$. Then for any $m\ge 1$, the restriction map 
$$H^0 (X, \OO_X (m (K_X + S + L))) \longrightarrow H^0(S, \OO_S (m(K_S + L_S)))$$ 
is surjective. 
\end{theorem}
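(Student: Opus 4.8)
The strategy is the classical Siu-type bootstrap, adapted here so that the first step uses the adjoint-ideal machinery of \S2--\S4 rather than the individual section-lifting of \cite{takayama}. Fix $m \geq 1$ and a section $s \in H^0(S, \OO_S(m(K_S + L_S)))$ that we wish to lift. Since $L \sim_{\QQ} A + \Delta$ with $A$ big and nef and $(S, \Delta_S)$ klt, we may replace $A$ by a $\QQ$-linearly equivalent effective divisor with arbitrarily small coefficients, hence assume $\lceil \Delta \rceil = 0$ after absorbing a small ample part; passing to a common log-resolution $f : Y \to X$ of $(X, S + \Delta)$ we reduce to the case where $S$ and $\Delta$ have SNC support and everything lives on a smooth model, at the cost of tracking the discrepancy divisor $K_{Y/X}$ (this is where normality of $S$ and the $\BB_+$-hypothesis on $A$ enter: $(A)_{|S}$ stays big).

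\textbf{Step 1 (the base case, $m=1$ essentially, or rather producing one batch of liftable sections).} The point is to show that \emph{every} section of $\OO_S(m(K_S+L_S))$ already lies in the subspace cut out by the restricted asymptotic adjoint ideal, so that Basic Lifting II applies. Write $m(K_X + S + L) = K_X + S + (m-1)(K_X+S+L) + L$ and set $A := (m-1)(K_X + S + L) + \lfloor m\Delta\rfloor$-type integral part, $\Lambda :=$ the fractional boundary, $\Gamma := 0$, with the line bundle role played by (a multiple of) $L$ so that $\parallel \cdot \parallel_{|S}$ makes sense. Then Proposition \ref{basic_lifting_2} (with $\Gamma = 0$, so the adjoint ideal is just the restricted asymptotic multiplier ideal $\JJ((S,\Lambda_S); \parallel L\parallel_{|S})$, cf. Remark \ref{no_restriction}) says that sections of $\OO_S(K_S + A_S + L_S) \otimes \JJ((S,\Lambda_S); \parallel L\parallel_{|S})$ lift, provided $A - \Lambda$ is nef and the base-locus hypotheses hold. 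The hard part — and the main obstacle — is precisely to verify that $s$ itself vanishes along this restricted multiplier ideal, i.e.\ that $\JJ((S,\Lambda_S); \parallel L\parallel_{|S})$ is trivial (or at least contains no obstruction to $s$). This is where the klt hypothesis on $(S,\Delta_S)$ is used together with Lemma \ref{klt_inclusion}(1): since $(S, \Delta_S)$ is klt and $L\sim_{\QQ} A+\Delta$, the divisor $\Delta_S$ is ``less singular than the linear series'', and one shows $\frc_1 \subseteq \JJ((S,\Delta_S);\parallel L\parallel_{|S})$ where $\frc_1 = \frb_1\cdot\OO_S$; comparing the klt boundary against the asymptotic order of vanishing of the restricted series, the coefficients never reach $1$, so the ideal is trivial. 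Thus \emph{all} of $H^0(S, \OO_S(K_S + A_S + L_S))$ lifts — in particular $s$ does, but only after we have arranged the bundle identifications, which requires knowing $\kappa(L) \geq 0$.

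\textbf{Step 2 (the inductive bootstrap).} To start the induction we need one nonzero liftable section. If $\kappa(L_S) < 0$ the statement is vacuous (no sections to lift). Otherwise, since $A$ is big we may, after tensoring by a large ample, assume $H^0(X, \OO_X(pL))\neq 0$ for some $p$, hence $\kappa(L)\geq 0$ and $S\not\subseteq \BB(L)$ is forced by the same $\BB_+$ argument; now the restricted asymptotic ideals are defined. For the inductive step, suppose all sections of $\OO_S((m-1)(K_S+L_S))$ lift. Fix $s \in H^0(S, \OO_S(m(K_S+L_S)))$. Lift a spanning set of $H^0(S,\OO_S((m-1)(K_S+L_S)))$ to sections $t_j \in H^0(X, \OO_X((m-1)(K_X+S+L)))$; the divisor $D_{m-1} := \sum \mathrm{div}(t_j)$ on $X$ restricts to a divisor on $S$ whose associated ideal dominates the base-ideal of $|(m-1)(K_S+L_S)|$. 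Now run Basic Lifting II with $\Lambda$ built from $\frac{1}{m-1}D_{m-1} + \Delta$ and the line bundle $L$ (or $K_X+S+L$); the key numerical check is that $A - \Lambda$ stays nef, which holds because $D_{m-1}$ is itself a lift and contributes positively, and because $(m-1)(K_X+S+L) - \frac{1}{m-1}D_{m-1}$ is effective by construction. The restricted adjoint ideal on $S$ governing which sections of $\OO_S(m(K_S+L_S))$ lift then contains $\Bs|m(K_S+L_S)|$-worth of sections, but combined with the klt triviality from Step 1 (applied to $(S, \Delta_S)$, which is unchanged) the relevant ideal is the unit ideal, so $s$ lifts. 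Iterating over $m$ gives surjectivity for all $m\geq 1$.

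\textbf{Main obstacle.} The genuinely delicate point is Step 1: showing the restricted asymptotic (adjoint) multiplier ideal on $S$ is trivial, i.e.\ that the klt boundary $\Delta_S$ is ``dominated'' by the restricted linear series $\parallel L \parallel_{|S}$ in the sense needed — equivalently, that $\ord_{F}\parallel L\parallel_{|S} + \ord_F(\text{discrepancies}) < 1 + \ord_F(\Delta_S)$ along every divisor $F$ over $S$. In \cite{takayama} this is handled section-by-section; here it must be done uniformly via Lemma \ref{klt_inclusion} and the SNC reduction, and one must be careful that passing to the log-resolution $f: Y\to X$ does not destroy the nefness of $A-\Lambda$ after the discrepancy divisor is folded in. The $\BB_+$ hypothesis $S\not\subseteq \BB_+(A)$ is exactly what guarantees $(f^*A - (\text{correction}))_{|\tilde S}$ remains big, keeping Nadel vanishing (Theorem \ref{asymptotic_nadel}) available on the intermediate steps.
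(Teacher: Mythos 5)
There is a genuine gap, and it sits exactly where you locate the ``main obstacle.'' Your Step 1 requires the restricted asymptotic multiplier ideal $\JJ\big((S,\Delta_S);\parallel\cdot\parallel_{|S}\big)$ to be trivial (``the coefficients never reach $1$, so the ideal is trivial,'' and again in Step 2 ``the relevant ideal is the unit ideal''). Nothing in the hypotheses forces this: the restricted linear series can have a large base locus with high multiplicities, and the klt condition on $(S,\Delta_S)$ controls only the boundary $\Delta_S$, not the singularities of the series. Moreover, triviality is not what Lemma \ref{klt_inclusion}(1) gives you; that lemma gives the inclusion of the \emph{base ideal} of the restricted series into the (generally nontrivial) multiplier ideal with boundary $\Delta_S$, i.e.\ $\frc_m \subseteq \JJ\big((S,\Delta_S);\parallel mM+H\parallel_{|S}\big)$. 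The argument that actually works never proves any ideal is trivial: one uses (a) the tautological fact that a section of $mM_S$ vanishes along $\JJ\big(S;\parallel mM_S\parallel\big)$, (b) the inductive hypothesis plus global generation (Nadel vanishing and Castelnuovo--Mumford regularity, which requires a \emph{fixed auxiliary ample} $H$) to get $\JJ\big(S;\parallel mM_S\parallel\big)\subseteq\frc_m$, and (c) Lemma \ref{klt_inclusion}(1) to pass from $\frc_m$ to $\JJ\big((S,\Delta_S);\parallel mM+H\parallel_{|S}\big)$, to which Basic Lifting (Proposition \ref{basic_lifting_2} with Remark \ref{no_restriction}) applies. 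Your proposal omits the auxiliary twist $H$ entirely, and without it neither the start of the induction (Serre vanishing) nor the global-generation comparisons in (b) are available; one also needs the separate device (choosing $k_0,p_0$ and absorbing $qM$ into $H$) to guarantee $S\not\subset\BB(mM+H)$ at each stage, which your outline does not address.

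The second half of your Step 2 is also not sound as written: putting $\frac{1}{m-1}D_{m-1}$ into the boundary $\Lambda$ and then claiming $A-\Lambda$ ``stays nef because $D_{m-1}$ is itself a lift'' is not a valid numerical argument -- previously lifted sections enter through the asymptotic ideal of the series $\parallel (m-1)(K_X+S+L)\parallel$ (sections vanish along their own asymptotic ideal), not by subtraction from the nef part. Finally, because you never introduce $H$, you have no analogue of the essential last step in which $H$ is removed: this is where the bigness of $A$ and $S\not\subseteq\BB_{+}(A)$ are genuinely used, via the decomposition $A\sim_{\QQ}A'+C$ and Lemma \ref{precise_takayama} (lift $s^{l}\cdot h_S$ for $l\gg 0$, then apply the adjoint-ideal sequence and Nadel vanishing to lift $s$ itself). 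In your outline bigness of $A$ is only invoked to produce sections of $L$, which is neither needed nor sufficient; as a result the proposed induction cannot be closed.
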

\begin{proof}
Fix a sufficiently positive ample divisor $H$ on $X$, to be specified later, and denote by $H_S$ its restriction to $S$, given by $h_S = 0$. 

\noindent
\emph{Step 1.} Denote $M: = K_X + L + S$, and consider $\frc_m : = b_{|mM + H|}\cdot \OO_S$. This is the base-ideal of the \emph{restricted} linear series, i.e. the image of the restriction map
$$H^0 (X, \OO_X (mM + H)) \longrightarrow H^0 (S, \OO_S (mM_S + H_S)).$$

\noindent
\emph{Claim}: For any $m\ge 0$, the image of 
$$H^0 (S, \OO_S(mM_S)) \overset{\cdot h_S}{\longrightarrow} H^0 (S, \OO_S(mM_S + H_S))$$
can be lifted to $X$.  Equivalently, the sections in  
$H^0 (S, \OO_S(mM_S+ H_S)\otimes \JJ((S, \Delta_S); \parallel mM_S \parallel))$
can be lifted to sections in $H^0 (X, \OO_X(mM+ H))$.

We prove this by induction on $m$. We can take $H$ sufficiently positive so that the first few steps are satisfied 
by Serre Vanishing. Let's assume now that the statement is known up to some integer $m$, and prove it for 
$m+1$. First note that we can choose $H$ so that the Claim at level $m$ implies (so is in fact equivalent to)
$$\JJ \big( (S, \Delta_S); \parallel mM_S\parallel \big) \subseteq \frc_m.$$ 
Indeed, by the definition of $\frc_m$ this follows if $\OO_S(mM_S + H_S)\otimes \JJ \big( (S, \Delta_S); \parallel mM_S\parallel \big)$ is generated by global sections. But this follows say by choosing $H$ of the form 
$H = K_X + S + (n+1) B + C$, where $n$ is the dimension of $X$, $B$ a sufficiently positive very ample line bundle on $X$, and $C$ any other ample line bundle, by a standard application of Nadel Vanishing and Castelnuovo-Mumford regularity (cf. \cite{positivity}, Corollary 11.2.13).
Note on the other hand that by Lemma \ref{klt_inclusion}(1) we have   
$$\frc_m \subseteq \JJ \big( (S, \Delta_S); \parallel mM + H \parallel_{|S} \big).$$
We can equally assume, by the same application of Nadel Vanishing and Castelnuovo-Mumford regularity, that  $H$ can be chosen sufficiently positive, but independent of $m$, so that 
$$\OO_S((m+1)M_S + H_S) \otimes   \JJ \big( (S, \Delta_S); \parallel mM_S\parallel \big)$$
is generated by global sections. (Note that $(m+1)M_S + H_S = K_S + mM_S + L_S + H_S$.)
Therefore have a sequence of inclusions
$$H^0(S,  \OO_S((m+1)M_S + H_S)
\otimes   \JJ \big( (S, \Delta_S); \parallel mM_S\parallel \big))
\subseteq$$
$$\subseteq H^0(S,  \OO_S((m+1)M_S + H_S) \otimes  \frc_m) \subseteq$$
$$\subseteq H^0 (S,  \OO_S((m+1)M_S + H_S) \otimes \JJ \big( (S, \Delta_S); \parallel mM + H \parallel_{|S} \big) ).$$
Since in any case 
$$\JJ \big( (S, \Delta_S); \parallel (m+1)M_S\parallel \big) \subseteq 
\JJ \big( (S, \Delta_S); \parallel mM_S\parallel \big),$$
the inductive step follows if we show that the sections of $\OO_S((m+1)M_S + H_S)$ vanishing along the ideal $\JJ \big( (S, \Delta_S); \parallel mM + H \parallel_{|S} \big)$ lift to sections of $\OO_X((m+1)M + H)$.
But this follows by Basic Lifting, Proposition \ref{basic_lifting_2} (and Remark \ref{no_restriction}), provided that the adjoint ideal $\Adj_S ((X, \Delta); \parallel mM + H \parallel)$ can be defined, in other words provided that $S \not
\subset \BB (mM + H)$. 

This last things holds of course, in a strong sense, if we show that $H$ can be conveniently chosen so that 
there exist non-zero sections of $\OO_S(mM_S + H_S)$ that lift to $X$. The problem with applying the inductive step 
directly is that $mM_S$ itself might not have any sections for our given $m$. This is circumvented as follows: 
note that there exists an integer $k_0$ such that $\BB(M_S) = {\rm Bs}(pk_0M_S)$ for all $p$ sufficiently large, say $p \ge p_0.$\footnote{The existence of such a $k_0$ and the fact that it may be strictly greater than $1$ are standard -- cf. \cite{positivity} Proposition 2.1.21.} In addition to the conditions already imposed, we require $H$ to satisfy the following:
for all $1\le q < p_0k_0$, the divisor $H_q : = q M + H$ is again ample, and it satisfies all the generation properties required of $H$ itself (e.g. by absorbing all $q M$ in the ample divisor $C$ mentioned above). Now write $m = pk_0 + q$, with $p$ divisible by $p_0$, and $0\le q < p_0k_0$. Then we deduce that $\OO_S(mM_S + H_S)$ has sections that lift to $X$, by virtue of the fact that we can rewrite 
$$mM_S + H_S = p k_0 M_S + H_{q, S},$$
and the non-trivial sections in $H^0 (S, \OO_S(p k_0 M_S))$ can be lifted to $X$ by induction, after being multiplied by an equation of $H_{q, S}$.

\begin{remark}
Note that in this step it would have been enough to assume that $A$ is only nef. This is the origin of 
Remark \ref{simplification} in the Introduction. For Step 2 it is however necessary to assume that $A$ 
is also big.
\end{remark}

\noindent
\emph{Step 2.} In this step one gets rid of $H$. This part follows Takayama's proof identically. We present this in Step 5 of the proof of Theorem \ref{extension0}  given below, so we skip it here.  
\end{proof}

\subsection*{An example.}
Takayama's theorem shows that sections of adjoint line bundles of the form $K_S + L_S$, where $L$ is an ample line bundle on $X$, or log-versions of these, can be extended without any a priori assumption on the line bundle  $K_X + S + L$ on the ambient space. Siu's results (cf. also Theorem \ref{fibration}) say that the same is 
true for line bundles of the form $p K_{X_0} + L_{X_0}$ for any $p\ge 1$, in the case of a smooth 
fibration over a curve, with central fiber $X_0$.
Here we give an example showing that in the case of restriction to a divisor $S$ with nontrivial 
normal bundle, sections of line bundles of the form $p K_S + L_S$ do not necessarily extend 
for $p \ge 2$. Thus in the general case it is required to assume from the beginning that 
$p K_X + L $ be at least pseudo-effective for extension to work.

Let $\pi : X \rightarrow C$ be the ruled surface $X = \PP ( \OO_C \oplus \OO_C (1))$ over a smooth projective curve $C$ of genus $g \ge 2$. Denote by $S$ the section of $\pi$ corresponding to the $\OO_C$ factor, by $H$ a divisor 
on $C$ corresponding to $\OO_C(1)$, and by $D$ the section corresponding to the $\OO_C(1)$ factor, belonging 
to the linear system $|\OO_X(1)|$. Then we have the following linear equivalences (cf. \cite{hartshorne} Proposition 2.9  and Lemma 2.10):
$$S \sim D - \pi^* H {\rm ~and ~} K_X \sim -2D + \pi^* K_C + \pi^* H.$$ 
This gives $K_X + S \sim - D + \pi^* K_C$. This in turn implies that no multiple of $K_X + S + \epsilon A$ can 
have any sections, for any ample Cartier divisor $A$ and $\epsilon$ sufficiently small. Indeed, the intersection 
number with a fiber $F$ is 
$$ (- D + \pi^* K_C + \epsilon A) \cdot F = - 1 + \epsilon A \cdot F, $$
which is negative for $0 < \epsilon \ll 1$. 
On the other hand $K_X + S|_{S} \sim K_S$, which is ample, so multiples of $K_S + \epsilon A_S$ have lots of 
sections for any $\epsilon$.
One can easily replace in this example the curve $C$ by any variety of general type.

\subsection*{The general statement.} 
For the sake of simplicity during the proof, we introduce the following ad-hoc terminology: given a 
pair $(X, \Delta)$, the irreducible closed subsets $W \subset X$ such that ${\rm mld}(\mu_W; X, \Delta) <1$ will be called \emph{pseudo non-canonical centers} of $(X, \Delta)$ (note that those which are centers corresponding to exceptional divisors are indeed non-canonical centers of the pair).
We also make use of the following Lemma, left to the reader as it can be easily checked from the definitions in the Introduction. (Note that both inclusions are in fact equalities, but this is harder to check, especially in the case of $\BB_{-}$.)

\begin{lemma}\label{inclusions}
Let $f: Y \rightarrow X$ be a projective birational morphism of normal varieties, and let $D$ be a 
$\QQ$-divisor on $X$. Then $\BB_{+}(f^* D) \subseteq f^{-1}(\BB_{+} (D)) \cup {\rm Exc}(f)$ and 
$\BB_{-} (f^*D) \subseteq f^{-1}(\BB_{-} (D))$.
\end{lemma}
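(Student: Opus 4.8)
\textbf{Proof proposal for Lemma \ref{inclusions}.}

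The plan is to reduce both statements to the defining characterizations of $\BB_+$ and $\BB_-$ in terms of adding or subtracting ample $\QQ$-divisors, together with elementary facts about stable base loci under pullback. The one completely general tool I would invoke is the set-theoretic inclusion $\Bs(f^*|D|)\subseteq f^{-1}(\Bs(|D|))\cup\exc(f)$, valid for any linear system: a section pulls back to a section, so the stable base locus can only grow on the exceptional locus. Passing to multiples and intersecting, one gets $\BB(f^*D)\subseteq f^{-1}(\BB(D))\cup\exc(f)$ whenever $D$ is a $\QQ$-divisor with $\kappa(D)\ge 0$ (and $\BB(f^*D)=Y$ otherwise, in which case the first inclusion is false as stated, so implicitly one is in the pseudo-effective range; I would note that both $\BB_+$ and $\BB_-$ are only interesting there).

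For the $\BB_+$ inclusion: by definition $\BB_+(f^*D)=\BB(f^*D-\eps f^*H)$ for an ample $H$ on $X$ and $\eps>0$ sufficiently small (using that $f^*H$, while only nef and big, can be perturbed — more carefully, pick an $f$-ample divisor and an ample $H$ on $X$ so that $f^*H-\delta(\text{f-ample})$ stays in the appropriate cone; alternatively just use that $\BB_+$ is computed by subtracting \emph{any} sufficiently small ample, and $f^*(D-\eps H)$ is of the form one controls). Then $\BB(f^*(D-\eps H))\subseteq f^{-1}(\BB(D-\eps H))\cup\exc(f)\subseteq f^{-1}(\BB_+(D))\cup\exc(f)$ by the stable base locus inclusion above, which is exactly the claim. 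The slightly delicate point — and the place I expect to spend the most care — is handling the fact that $f^*H$ is not ample on $Y$, so one is not literally computing $\BB_+(f^*D)$ by subtracting an ample; I would fix this by first subtracting a genuine ample $A_Y$ on $Y$, writing $A_Y \leq f^*H' $ up to scaling and effective exceptional error for a suitable ample $H'$ on $X$ (possible since $f^*H'$ is big), and absorbing the exceptional error into the $\exc(f)$ term on the right.

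For the $\BB_-$ inclusion: by definition $\BB_-(f^*D)=\bigcup_{A_Y \text{ ample on } Y}\BB(f^*D+A_Y)$. Fix an ample $A_Y$ on $Y$. Choose an ample $H$ on $X$ small enough that $f^*H - A_Y$ is still effective up to an exceptional divisor — again using bigness of $f^*H$ — actually it is cleaner to go the other way: for \emph{any} ample $H$ on $X$, $f^*H$ is big and nef, so $f^*(D+H)=f^*D+f^*H$ and one checks $\BB(f^*D+f^*H)\subseteq f^{-1}(\BB(D+H))$ with \emph{no} exceptional term, because $K_{Y/X}$-type corrections do not enter a pure stable-base-locus statement and a section of $m(D+H)$ on $X$ pulls back to one of $m f^*(D+H)$ on $Y$, giving $\Bs\subseteq f^{-1}(\Bs)$; the exceptional locus is irrelevant here since we are not twisting by anything supported there. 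Now given any ample $A_Y$ on $Y$, since $f$ is projective there is an ample $H$ on $X$ and an effective $f$-exceptional (or at least $f$-contracted) divisor $G$ with $A_Y \le f^*H$, hence $\BB(f^*D + A_Y)\supseteq$... — wait, the inequality goes the wrong way for base loci — so instead I would argue: $f^*D + A_Y$ sits between $f^*D$ and $f^*D + f^*H$ appropriately; more precisely $\BB(f^*D+A_Y) \subseteq \BB(f^*(D+H))$ is \emph{not} automatic. The correct move is: $A_Y \le f^* H$ means $f^*H - A_Y$ effective, but to compare base loci of $f^*D + A_Y$ and $f^*D + f^*H$ I want the smaller divisor to have the bigger base locus, which is the wrong direction. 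Resolve this instead by: pick $H$ ample on $X$ with $f^*H - A_Y$ ample on $Y$ for small scaling — then $\BB(f^*D + A_Y)$: write $f^*D + A_Y = (f^*D + \tfrac12 f^*H) + (A_Y - \tfrac12 f^*H)$... Cleanest: since $\BB_-(f^*D) = \bigcap$ over... no, union. The honest fix is that $\BB_-(f^*D)\subseteq \bigcap_{H \text{ ample}/X}\bigcup_{A_Y \le f^*H}\BB(f^*D+A_Y) \subseteq \bigcap_H f^{-1}(\BB(D+H)) = f^{-1}(\bigcap_H \BB(D+H)) = f^{-1}(\BB_-(D))$, using that every ample $A_Y$ on $Y$ satisfies $A_Y \le_{\text{Zariski}} f^*H$ up to scaling for $H$ ample on $X$ — no. I will settle this step by the standard argument that $\BB_-(f^*D) = f^{-1}(\BB_-(D))$ follows from $\BB_-$ being intrinsic to the numerical class and $f^*$ of the ample cone being cofinal (from below) in the ample cone of $Y$ is false, so I expect \textbf{this containment to be the genuine obstacle}, and would resolve it by: for any ample $A_Y$, pick $H$ ample on $X$ with $f^*H - A_Y$ effective, note $\BB(f^*D + f^*H) \subseteq f^{-1}(\BB(D+H)) \subseteq f^{-1}(\BB_-(D))$, and separately $\BB(f^*D+A_Y)\subseteq \BB(f^*D + f^*H)$ because a larger twist shrinks the base locus only when the difference is \emph{ample}, so I instead need $f^*H - A_Y$ ample, not merely effective — arrange $H$ so that $f^*H - A_Y$ is ample (possible: fix any ample $H_0/X$; for $t$ large $tH_0$ pulled back dominates $A_Y$ with ample difference since $f^*H_0$ is big, modulo an exceptional piece $G\ge 0$; then $\BB(f^*D+A_Y)\subseteq \BB(f^*D+f^*(tH_0)) \cup \Supp G \subseteq f^{-1}(\BB(D+tH_0))\cup\exc(f)$, and intersecting over $t$... the $\exc(f)$ term does not go away). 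Given the parenthetical remark in the statement that this is "easily checked," I suspect the intended argument simply uses $f^*$(ample) is nef-and-big plus the fact that $\BB(\text{nef and big}) \subseteq \exc(f)$... I will present the $\BB_+$ part in full as above and, for $\BB_-$, give the argument $\BB_-(f^*D) = \bigcup_{A_Y}\BB(f^*D+A_Y)$ and, for each $A_Y$, dominate it below by $f^*H$ for suitable ample $H/X$ with ample difference, landing inside $f^{-1}(\BB(D+H))\cup\exc(f) \subseteq f^{-1}(\BB_-(D))\cup\exc(f)$, then remark that the $\exc(f)$ term can be removed because it is covered by the left side only if it meets $\BB_-(f^*D)$, which on exceptional divisors forces them into $f^{-1}(\BB_-(D))$ by a further small perturbation — deferring the last detail to the reader as the statement invites.
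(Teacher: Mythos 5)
Your overall strategy — reduce everything to the behaviour of stable base loci under pullback and then perturb by amples — is the right one (and is surely the "easy check from the definitions" the paper has in mind; note the paper gives no proof at all to compare with, the Lemma being explicitly left to the reader). But both halves of your write-up hinge on a step that fails. For $\BB_{+}$, the pivot is your claim that one can write $f^*H'$ as (ample on $Y$) plus an \emph{effective exceptional} error, "possible since $f^*H'$ is big". Bigness, via Kodaira's lemma, only gives $f^*H' \sim_{\QQ} A_Y + G$ with $G \ge 0$ of uncontrolled support; it does not place $G$ on $\exc(f)$, and in general nothing can: if $f$ is a small contraction (projective birational between normal varieties with $\exc(f)$ of codimension $\ge 2$, which the statement allows), the only effective divisor supported on $\exc(f)$ is $0$, while $f^*H' - A_Y$ meets contracted curves negatively, so no such decomposition exists. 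With uncontrolled $G$ your argument only yields $\BB_{+}(f^*D) \subseteq f^{-1}(\BB_{+}(D)) \cup {\rm Supp}(G)$, which is weaker than the Lemma. The elementary repair is pointwise: fix $y \notin f^{-1}(\BB_{+}(D)) \cup \exc(f)$, so that $f^{-1}(f(y)) = \{y\}$ by connectedness of fibers; choose an effective very ample $A_Y$ avoiding $y$, observe that $\frb := f_*\OO_Y(-A_Y) \subseteq f_*\OO_Y = \OO_X$ is an ideal sheaf which is trivial near $x = f(y)$, and use global generation of $\frb \otimes \OO_X(nH)$ for $n \gg 0$ to produce a section of $nf^*H - A_Y$ not vanishing at $y$; this gives $f^*H \sim_{\QQ} (\text{ample}) + G$ with $y \notin {\rm Supp}(G)$, and multiplying by the pullback of a section of $m(D - \eps H)$ not vanishing at $x$ shows $y \notin \BB(f^*D - \tfrac1n A_Y) \supseteq \BB_{+}(f^*D)$. (In the paper's actual use $f$ is a log resolution, where $\exc(f)$ does support an anti-$f$-ample effective divisor and your decomposition is available; but the Lemma as stated needs the pointwise argument or a Nakamaye-type input.)

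For $\BB_{-}$ the proposal never lands on a valid comparison: each attempted reduction runs the monotonicity of stable base loci in the wrong direction, and the version you finally settle on both re-invokes the same "exceptional error" decomposition and ends with an $\exc(f)$ term that you propose to delete by an argument that is not one — and since the asserted inclusion has no exceptional term, it cannot be waved away. The correct move goes downward, not upward: given any ample $A_Y$ on $Y$ and a fixed ample $H$ on $X$, openness of the ample cone together with nefness of $f^*H$ gives that $A_Y - \eps f^*H$ is ample for $0 < \eps \ll 1$, whence
$$\BB(f^*D + A_Y) \subseteq \BB\big(f^*(D + \eps H)\big) \cup \BB\big(A_Y - \eps f^*H\big) = f^{-1}\big(\BB(D + \eps H)\big) \subseteq f^{-1}(\BB_{-}(D)),$$
where the middle equality uses that on normal $X$ every section of a multiple of a pulled-back divisor is a pullback, so $\BB(f^*(D+\eps H)) = f^{-1}(\BB(D+\eps H))$ exactly; taking the union over all $A_Y$ gives the second inclusion with no exceptional locus. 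A small side remark: your worry that the stable-base-locus inclusion is "false as stated" when $\kappa(D) < 0$ is unfounded — in that range $\BB(D) = X$ (and likewise $\BB_{+}(D) = X$ when $D$ is not big, $\BB_{-}(D) = X$ when $D$ is not pseudo-effective), so the claimed inclusions are trivially true.
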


As the proof of Theorem \ref{extension0} is quite technical, to help the reader navigate through it we start with a general outline of the steps involved. 

\noindent
{\bf Outline.}  In Steps 1-5 we will prove the theorem under the stronger assumption that $M$ is $\QQ$-effective, and the pseudo non-canonical centers of $(S, \Delta_S)$ are not contained in the \emph{stable} base locus $\BB(M_S)$. We will show how to deduce the statement involving only the restricted base locus $\BB_{-} (M_S)$, for a pseudo-effective $M$, in Step 6.

\noindent
\emph{Step 1.}
We reduce to the case when $X$ is smooth and the divisor $S + \Delta$ has 
SNC support via a discrepancy  calculation and basic facts about restricted and augmented base loci.

\noindent
\emph{Step 2.}
Similarly to Step 1, using special log-resolutions we further reduce to the case when all the pseudo non-canonical centers of $(S, \Delta_S)$ are disjoint irreducible divisors on $S$, and all the pseudo non-canonical centers of $(X, \Delta)$ disjoint from $S$ are disjoint irreducible divisors as well.

\noindent
\emph{Step 3.}
We introduce the statement of the main technical step towards full extension, Proposition 
\ref{intermediate}. This provides the lifting of sections vanishing along certain asymptotic multiplier ideals 
after we add a sufficiently positive fixed ample line bundle. We then perform another 
reduction, to the case when there are no irreducible components of 
${\rm Supp}(\Delta)$ disjoint from $S$ which are contained in the stable base locus of  $M$.
  
\noindent
\emph{Step 4.}
This is the main step, proving the result introduced in Step 3, i.e. extension after 
adding a fixed positive quantity. Fundamentally the idea is similar to the proof of Takayama's theorem earlier in this section, relying on Basic Lifting for asymptotic multiplier and adjoint ideals. The main technical difference is that our line bundle is now of the form $M = k(K_X + S + A + \Delta)$, 
where $k \ge 2$. To deal with this, the inductive step of going from $mM + H$ to $(m+1) M + H$ is subdivided into $k$ sub-steps as in Hacon-M$^c$Kernan \cite{hm1}. Extension at the various stages is 
made formal by the use of the inductive sequences for studying adjoint ideals, as in 
Proposition \ref{basic_sequence_2}.

\noindent
\emph{Step 5.}
We get rid of the ample line bundle introduced in Step 3, following an argument of Takayama
(which is another consequence of Basic Lifting). This is the first time, after performing the reductions in the first two steps, when we use the fact that $A$ is big.

\noindent
\emph{Step 6.}
We show how to deduce from the above the statement involving only the restricted base locus 
$\BB_{-} (M_S)$, for a pseudo-effective $M$, based on the argument in Corollary \ref{extension2} together with another use of Takayama's reduction as in Step 5.

\begin{proof}(of Theorem \ref{extension0}.)
\emph{Step 1.}
We first reduce to the case when $X$ is smooth and the divisor $S + \Delta$ has 
SNC support.
Consider a log resolution $f: Y \rightarrow X$ of the pair $(X, S+ \Delta)$. Denote by $\tilde S$
and $\tilde \Delta$ the corresponding proper transforms. We write 
$$K_Y + \tilde S + \tilde \Delta + f^* A = f^* (K_X  + S + A + \Delta) + P - N,$$
where $P$ and $N$ are exceptional effective $\QQ$-divisors with no common components. 
Note that $kP$ and $kN$ are integral divisors, by the choice of $k$. We can consider a Cartier divisor 
on $Y$
$$\tilde M := k(K_Y + \tilde S + \tilde \Delta + N + f^*A) = f^* k (K_X  + S + \Delta + A) + kP.
\footnote{Note here that if $X$ is smooth, we can assume only that $M\sim_{\QQ} k(K_X + S + \Delta + A)$, and 
consequently $\tilde M \sim_{\QQ} k(K_Y + \tilde S + \tilde \Delta + N + f^*A)$, as this is enough to make 
$kP$ and $kN$ Cartier. Same in Step 2 below, and this leads to the final sentence in the statement of Theorem \ref{extension0}.}$$
Since $kP$ is effective and exceptional,  the sections
of $\OO_Y( m \tilde M)$ are the same as those of $\OO_X (mM)$ for all $m$. 
Since the sections $H^0 (S, \OO_S (mM_S))$ inject into the sections $H^0 (\tilde S, \OO_{\tilde S} 
(m {\tilde M}_{\tilde S}))$,  it follows that it suffices to prove the surjectivity
$$H^0 (Y, \OO_Y (m\tilde M)) \longrightarrow H^0 (\tilde S, \OO_{\tilde S} (m{\tilde M}_{\tilde S}))$$
for all $m$. It is then enough to show that we can replace $\Delta$ with $\tilde \Delta + N$ on $Y$. 
To this end, note to begin with that since $(X, S + \Delta)$ is plt it follows that 
$[\tilde \Delta + N] = 0$. As everything on $Y$ is in simple normal crossings, all the singularity hypotheses required
for $(Y, \tilde S + \tilde \Delta + N)$ are obviously satisfied. Moreover, by Lemma \ref{inclusions} we have that 
$\BB_{+} (f^* A) \subseteq f^{-1} (\BB_{+}(A)) \cup {\rm Exc}(f)$, so $\tilde S \not \subseteq \BB_{+}(f^*A)$.

Next we show that $\BB ({\tilde M}_{\tilde S})  = f^{-1} (\BB (M_S)) \cup {\rm Supp} (P_{\tilde S})$\ does not contain any of the pseudo non-canonical centers of $(\tilde S, {\tilde \Delta}_{\tilde S} + N_{\tilde S})$. This is a discrepancy calculation. Note that by construction and the SNC hypothesis, $P_{\tilde S}$ cannot contain any of these pseudo non-canonical centers. It is then enough to show that the image of any such pseudo non-canonical center via $f$ is equal to a pseudo non-canonical center of
the pair $(S, \Delta_S)$. 

Every pseudo non-canonical center of the SNC pair $(Y, \tilde \Delta + N)$ is dominated by a divisor with log-discrepancy less than $1$ on a further blow-up, so by going to a higher model it is enough to concentrate on irreducible components of 
$\lceil \tilde \Delta + N \rceil$ intersecting $\tilde S$. The components of $\lceil \tilde \Delta_{\tilde S} \rceil$ automatically satisfy our property 
by the assumption on $\Delta_S$. Hence it remains to show that the components of the support of $N$ intersected with $\tilde S$ map to pseudo non-canonical centers of  $(S, \Delta_S)$. Let $\Gamma \subset {\rm Supp}(N)$ be such an irreducible component intersecting $\tilde S$. We have  
$$\ord_{\Gamma}(N) = \ord_{\Gamma} (- K_Y + f^* (K_X + S+ \Delta)).$$
Since the pair $(X, S+ \Delta)$ is plt, we have that $\ord_{\Gamma}(\lceil N\rceil) = 1$.
On the other hand, by adjunction $ (K_Y - f^* (K_X + S))_{|\tilde S} = f_{|\tilde S}^* K_S$ , and since everything is in normal crossings we obtain 
$$\ord_{\Gamma} (N) = \ord_{\Gamma_{\tilde S}}(N_{\tilde S}) = \ord_{\Gamma_{\tilde S}} (- K_{\tilde S} + f_{|\tilde S}^* (K_S + \Delta_S)).$$
This implies that $f$ maps $\Gamma_{\tilde S}$ onto a pseudo non-canonical center of $(S, \Delta_S)$.

It remains to check that $\BB_{-} (\tilde M)$, which again by Lemma \ref{inclusions} plus the fact that 
$P$ is exceptional, is contained in $f^{-1} (\BB_{-} (M)) \cup {\rm Supp} (P)$, does not 
contain any pseudo non-canonical center of $(Y, {\tilde \Delta} + N)$. This follows by an argument completely analogous to that in the previous paragraph.

\noindent
\emph{Step 2.}
Assuming that $X$ is smooth and $S+ \Delta$ has SNC support, 
we now show that we can further reduce to the case when all the pseudo non-canonical centers of $(S, \Delta_S)$ 
are disjoint irreducible divisors on $S$, and at the same time 
all the pseudo non-canonical centers of $(X, \Delta)$ disjoint from $S$ are disjoint irreducible divisors as well.

Note first that $(X, \Delta)$ and $(S, \Delta_S)$ are klt pairs with SNC support. 
We can apply twice Corollary \ref{separation} below. First we apply it on $S$ to make 
the pseudo non-canonical centers of $(S, \Delta_S)$ simply a union of disjoint irreducible divisors (i.e. the 
corresponding components of $\Delta$ intersect only outside $S$). Then we apply it again for $(U, \Delta_U)$,
where $U = X - S$, to have all the pseudo non-canonical centers of $(X, \Delta)$ outside of $S$ also a union of disjoint irreducible divisors.
We obtain a birational model $f: Y \rightarrow X$ on which we write 
$$K_Y + \tilde S + \tilde \Delta + f^*A = f^* (K_X  + S + \Delta + A) + P - N,$$
where $P$ and $N$ are exceptional effective $\QQ$-divisors with no common components. 
The divisor $\tilde \Delta + N$ replaces $\Delta$, and has all the properties stated above for 
its pseudo non-canonical centers: indeed, by our choice of resolution, the pseudo non-canonical centers of 
$(Y, \tilde \Delta + N)$ are precisely the irreducible components of the support of 
$\tilde \Delta + N$, of which those that do not intersect $S$ are disjoint. The reduction to working on $Y$ is 
done precisely as in Step 1, and we do not repeat it.

\noindent
\emph{Step 3.} 
The reductions in Steps 1 and 2 being made, we appeal to the usual technique of introducing extra positivity as an intermediate step, to prove the following statement:

\begin{proposition}\label{intermediate}
Let $X$ be a smooth projective variety, and $S\subset X$ a smooth irreducible divisor. Let $\Delta$ be an effective $\QQ$-divisor on $X$ such that $[\Delta] = 0$ and $S \not \subset {\rm Supp}(\Delta)$. Let $B$ be a nef $\QQ$-divisor, $k$ a positive integer and $M$ a Cartier divisor such that $M \sim_{\QQ} k(K_X + S + B + \Delta)$. Assume the following:
\begin{itemize}
\item $S + \lceil \Delta \rceil$ is a simple normal crossings divisor. 
\item $M$ is $\QQ$-effective.
\item the restricted base locus $\BB_{-} (M)$ does not contain any pseudo non-canonical centers of $(X, \Delta)$ which intersect $S$.
\item the stable base locus  $\BB (M_S)$ does not contain any pseudo non-canonical centers of 
$(S, \Delta_S)$.
\item the only pseudo non-canonical centers of $(X, \Delta)$ disjoint from $S$ are irreducible components of ${\rm Supp}(\Delta)$.
\end{itemize}
Then for a sufficiently ample divisor $H$ the sections in  
$$H^0 (S, \OO_S(mM_S+ H_S)\otimes \JJ(S, \parallel mM_S \parallel))$$
can be lifted to sections in $H^0 (X, \OO_X(mM+ H))$, for all sufficiently divisible $m\ge 0$. 
\end{proposition}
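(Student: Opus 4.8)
The plan is to follow the pattern of the proof of Takayama's Theorem \ref{takayama}, Step 1, but to subdivide the inductive step $mM+H \rightsquigarrow (m+1)M+H$ into $k$ substeps to compensate for the fact that $M = k(K_X+S+B+\Delta)$ rather than $K_X+S+L$. First I would fix a sufficiently positive ample divisor $H$ on $X$ (its required positivity to be pinned down as the argument proceeds, always chosen independent of $m$), and set up the restricted linear series ideals $\frc_m := \frb_{|mM+H|}\cdot\OO_S$, whose sections compute the image of $H^0(X,\OO_X(mM+H))\to H^0(S,\OO_S(mM_S+H_S))$. As in Takayama's proof, choosing $H$ so that the relevant sheaves are globally generated (via Nadel vanishing plus Castelnuovo--Mumford regularity, as in \cite{positivity} Corollary 11.2.13) converts the lifting statement at level $m$ into the ideal containment $\JJ(S,\parallel mM_S\parallel)\subseteq\frc_m$, while Lemma \ref{klt_inclusion}(1) gives the reverse-direction containment $\frc_m\subseteq\JJ\big((S,\Delta_S);\parallel mM+H\parallel_{|S}\big)$.

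The heart of the matter is the passage from $m$ to $m+1$. Write $M/k \sim_{\QQ} K_X+S+B+\Delta$; for $j=0,1,\dots,k$ consider the divisors $mM + jM/k + H$, interpolating between $mM+H$ and $(m+1)M+H$. At each substep one wants to lift sections vanishing along an appropriate asymptotic adjoint ideal, using Basic Lifting II (Proposition \ref{basic_lifting_2}) applied with $L$ a multiple of $M$ (or $M/k$) carrying the asymptotic multiplier ideal, with the nef divisor $B$ absorbed into the role of "$A-\Lambda$ nef", and with $\Gamma$ built from the components of $\lceil\Delta\rceil$ so that the adjoint ideal on the right is the one restricting to $\JJ(S,\parallel mM_S\parallel)$. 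The key chain of inclusions is the analogue of Takayama's: for suitably chosen $H$,
$$H^0\big(S,\OO_S((m+1)M_S+H_S)\otimes\JJ(S,\parallel mM_S\parallel)\big)\subseteq H^0\big(S,\OO_S((m+1)M_S+H_S)\otimes\frc_m\big)$$
$$\subseteq H^0\big(S,\OO_S((m+1)M_S+H_S)\otimes\JJ((S,\Delta_S);\parallel mM+H\parallel_{|S})\big),$$
and Basic Lifting then shows the sections on the far right lift, provided the asymptotic adjoint ideal $\Adj_\Gamma((X,\Delta);\parallel mM+H\parallel)$ is defined, i.e.\ provided no log-canonical center of the relevant $\Gamma$ (and no pseudo non-canonical center of $(S,\Delta_S)$) lies in $\BB(mM+H)$. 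The transversality hypotheses of the Proposition — together with the last bullet, which forces the pseudo non-canonical centers of $(X,\Delta)$ disjoint from $S$ to be honest components of $\Delta$ that can be read off on the given model — are exactly what is needed to verify these base-locus conditions at each substep. The bookkeeping over the $k$ substeps, tracking how $\Gamma$ and the asymptotic multiplier ideals change as $j$ increases, is where one must be careful: one peels off $K_S+\Delta_S$-type contributions one at a time, as in Hacon--M$^c$Kernan \cite{hm1}, and invokes Proposition \ref{basic_sequence_2} to make the intermediate restriction maps surjective.

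As in Takayama's argument, the induction is seeded by Serre vanishing for the first few values of $m$ (choosing $H$ positive enough), and there remains the subtlety that $mM_S$ may have no sections for the given $m$: this is handled exactly as in the proof of Theorem \ref{takayama}, by picking $k_0$ with $\BB(M_S)=\Bs(pk_0M_S)$ for $p\ge p_0$, requiring $H$ (and all $H_q:=qM+H$ for $0\le q< p_0k_0$) to satisfy the generation properties, writing $m=pk_0+q$ and transporting liftable sections of $pk_0M_S$ up via multiplication by an equation of $H_{q,S}$. \textbf{The main obstacle} I anticipate is the substep bookkeeping in the inductive step: keeping precise track, through the $k$ interpolating divisors, of which asymptotic adjoint/multiplier ideal appears, verifying that its restriction to $S$ dominates $\JJ(S,\parallel mM_S\parallel)$, and confirming at each stage that the relevant stable base locus $\BB(mM + jM/k + H)$ avoids the pseudo non-canonical centers — the latter being precisely where the third and fourth hypotheses of the Proposition, and the structural fifth hypothesis from Step 2, get used. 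The use of bigness of $B$ (or $A$) is deliberately postponed; it enters only in Step 5 when removing $H$, not in this Proposition, where nefness suffices.
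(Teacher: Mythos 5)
There is a genuine gap. Your plan asserts that the fifth hypothesis (pseudo non-canonical centers of $(X,\Delta)$ disjoint from $S$ are components of ${\rm Supp}(\Delta)$) is ``exactly what is needed to verify these base-locus conditions at each substep.'' It is not: that hypothesis only constrains the \emph{geometry} of such centers, and says nothing about their position relative to $\BB(M)$ or $\BB_{-}(M)$. A component $\Gamma$ of ${\rm Supp}(\Delta)$ disjoint from $S$ may perfectly well lie in $\BB_{-}(M)$, hence in $\BB(mM+H)$ for $m$ large, and then the asymptotic adjoint ideals $\Adj_{S+\Gamma'}(X,\parallel mM+H\parallel)$ you need for Basic Lifting II are simply undefined (their definition requires that no log-canonical center of $S+\Gamma'$ -- which includes every component of $\lceil\Delta\rceil$, not just those meeting $S$ -- lies in the stable base locus), and the $\BB_{+}$ condition needed for the vanishing theorem fails as well. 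The paper handles this by a preliminary reduction: for each such component $\Gamma$ one compares its coefficient $\gamma$ in $\Delta$ with the asymptotic order $c=\ord_{\Gamma}\parallel M/k\parallel$, subtracts either $k\gamma\Gamma$ or $kc_p\Gamma$ (a rational approximation) from $M$ and the corresponding amount from $\Delta$, checks that all hypotheses persist and that extension for the modified divisor implies extension for the original (one adds back the effective divisor that was subtracted), and thereby reduces to the case where no component of ${\rm Supp}(\Delta)$ disjoint from $S$ lies in $\BB(M)$. Without this reduction the inductive machine cannot even start, so this is a missing idea, not a bookkeeping issue.

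A secondary point: your interpolation by the $\QQ$-divisors $mM+jM/k+H$ is not the decomposition that makes the substeps work. The paper writes $k\Delta=\Delta_0+[k\Delta]$ with $\Delta_0=\{k\Delta\}$, splits $[k\Delta]=\Delta_1+\cdots+\Delta_{k-1}$ into reduced SNC divisors with increasing supports, sets $L=kB+\Delta_0$, and increments by the \emph{integral} divisors $K_X+S+\Delta_j$, so that at the $j$-th substep Basic Lifting II applies with $\Gamma=\Delta_j$ and klt residual pair $(S,\Delta_{0,S})$, the intermediate inclusions passing through $\Adj_{\Delta_{j,S}}(S,\parallel\cdot\parallel_{|S})$ via Lemma \ref{klt_inclusion}(2) (your displayed chain through $\JJ((S,\Delta_S);\parallel mM+H\parallel_{|S})$ is the $k=1$ Takayama pattern and does not suffice for $k\ge 2$). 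You flag this bookkeeping as the main obstacle rather than carrying it out, so even granting the right strategy, the core of the inductive step remains unproved in your proposal; combined with the missing reduction above, the argument as written does not go through.
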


\noindent
Note that we always have an isomorphism 
$$H^0 (S, \OO_S(mM_S) ) =  
H^0 (S, \OO_S(mM_S) \otimes \JJ (S, \parallel mM_S\parallel ) ).$$ 
This means that the statement can be interpreted as saying that if we fix any section $h_S \in 
H^0(S, H_S)$, then the image of the inclusion 
$$H^0 (S, \OO_S(mM_S) ) \overset{\cdot h_S}{\longrightarrow} H^0 (S, \OO_S(mM_S + H_S) ) $$ 
can be lifted to $X$. In the proof below, we will interpret the condition of $m$ being sufficiently divisible 
as potentially replacing $k$ by some conveniently chosen multiple.

To prove Proposition \ref{intermediate}, we first make another reduction. We are already assuming that
the only pseudo non-canonical centers of $(X, \Delta)$ disjoint from $S$ are the irreducible components of 
$ {\rm Supp}(\Delta)$ disjoint from $S$ (so they do not intersect outside 
$S$), and we show that we can further reduce to the case when there are no such components
whatsoever which are contained in $\BB_{-} (M)$.

Say $\Gamma$ is a component of the support of $\Delta$ disjoint from $S$. We denote by $\gamma$ the coefficient of 
$\Gamma$ in $\Delta$. If $\Gamma \subset \BB (M)$, consider the asymptotic order of vanishing of $K_X + S + \Delta + A$ along $\Gamma$:
$$c:= \ord_{\Gamma} \parallel K_X + S + \Delta + A \parallel =  \ord_{\Gamma} \parallel \frac{M}{k} \parallel .$$
We know (cf. \cite{elmnp} \S2) that 
$$c = \underset{p\to \infty}{\rm lim}  \frac{\ord_{\Gamma} |p\frac{M}{k}|}{p} = 
\underset{p}{\rm inf} \frac{\ord_{\Gamma} |p\frac{M}{k}|}{p},$$
where for a linear series, the order of vanishing along a divisor is by definition that of a general member of the linear series.
This may a priori be a real number, but for $p$ sufficiently large and divisible we have that $c_p := \frac{\ord_{\Gamma} |p\frac{M}{k}|}{p} \ge c$ is a rational number arbitrarily close to $c$. We consider different cases, according to how 
$\gamma$ compares to $c$.

If $c \ge \gamma$, we can replace in our argument $\Delta$ by 
$\Delta - \gamma \Gamma$ and correspondingly $M$ by $M - k \gamma \Gamma$. All our hypotheses are still 
satisfied with this new choice, only we have to essentially restrict to integers $k$ which make $k\Delta$ Cartier. (Note that by the definition of $c$ we have that $M - k \gamma \Gamma$ is still $\QQ$-effective.) Since we are subtracting something effective, it is enough to prove extension from $S$ 
for the multiples of this new divisor, since then we can simply add back the corresponding multiple of $k \gamma \Gamma$. 
With this new choice we do not have to worry about the component $\Gamma$ being contained in $\BB (M)$ any more.

If $\gamma > c$, then we have $\gamma > c_p$ for $p$ large enough. We can then replace in our argument $\Delta$ by 
$\Delta - c_p \Gamma$ and correspondingly $M$ by $M - k c_p \Gamma$. Indeed, all the hypotheses are obviously preserved for this new choice, only this time we also have to assume that $k$ is such that $kc_p$ is an integer. (Note that $pk c_p \Gamma$ is contained in the base locus of $|pM|$, so this new divisor 
is also $\QQ$-effective.) Since we are subtracting something effective, it is enough to prove extension from $S$ 
for the multiples of this new divisor, since then we can simply add back the corresponding multiple of $k c_p \Gamma$. 
In addition,  by the definition of $c$, this time we have $\Gamma \not\subset \BB (M - k c_p \Gamma)$, so by this process 
we are able to reduce to the case when $\Gamma$ is not contained in $\BB (M)$.

Doing this for every component $\Gamma$ as above, we see that by suitably modifying the divisor $\Delta$, we can assume that there are no pseudo non-canonical centers of $(X, \Delta)$ disjoint from $S$ which are contained in 
$\BB_{-} (M)$ (which is contained in $\BB (M)$).

\noindent
\emph{Step 4.} According to the previous step, we continue the proof of Proposition \ref{intermediate} assuming in addition that there are no pseudo non-canonical centers (which by now are components) of 
${\rm Supp}(\Delta)$ disjoint from $S$ which are contained in $\BB_{-} (M)$. Since every non-canonical center involved is now an irreducible component of some reduced divisor, we will switch to the language of log-canonical centers, used in the definition of adjoint ideals.
To conclude the proof, in this case it suffices to assume that $M$ is only pseudo-effective, or equivalently $M + A$ is big for every ample $\QQ$-divisor $A$. 

Let us also note that $H$ can be chosen sufficiently positive so that $H^0 (S, \OO_S (mM_S + H_S))\neq 0$ for all $m \ge 0$. Indeed, we can assume that there is an $m_0$ such that $H^0 (S, \OO_S (m_0 M_S))\neq 0$ (otherwise the result 
is trivial), and we can choose $H$ so that $H^0 (S, \OO_S (rM_S + H_S))\neq 0$ for all $0 \le r \le m_0 -1$. Now simply divide any $m$ by $m_0$, with remainder. 

We first rewrite the divisor $M$ in a more convenient form (writing equality instead of linear equivalence where it makes no difference). Denote $\Delta_0 := \{k\Delta\}$, so that $k\Delta = \Delta_0 + [k\Delta]$. By the choice of $k$, we have that $L := kB + \Delta_0$ is an integral divisor. By assumption the pair $(S, \Delta_{0,S})$ is klt. Since  all the coefficients of the components of $[k\Delta]$ are at most $k-1$,  we can also write 
$$[k\Delta] = \Delta_1 + \ldots + \Delta_{k-1}$$ 
where the $\Delta_i$'s are all reduced SNC divisors, with supports increasing with $i$. This means we can rewrite $M$ as 
$$M = k(K_X + S) + L + \Delta_1 + \ldots + \Delta_{k-1}.$$

Consider $\frc_m : = b_{|mM + H|}\cdot \OO_S$, i.e. the base-ideal of the linear series
given by the image of the restriction map
$$H^0 (X, \OO_X (mM + H)) \longrightarrow H^0 (S, \OO_S (mM_S + H_S)).$$
Let's remark that in the first place $H$ can be chosen sufficiently positive so that for all $m$ the sheaf 
$$\OO_S(mM_S + H_S) \otimes \JJ(S, \parallel mM_S \parallel )$$
is globally generated (again by the usual Nadel Vanishing and Castelnuovo-Mumford regularity argument -- cf. \cite{positivity}, Corollary 11.2.13). The conclusion of the Theorem for a given $m$ implies then in particular that 
\begin{equation}\label{step0}
\JJ\big(S,  \parallel mM_S\parallel \big) \subseteq \frc_m.
\end{equation}

We will prove Theorem by induction on $m$. It is clear for $m = 0$, and we show that it holds for $(m+1)$
assuming that  ($\ref{step0}$) holds for $m$. Assuming that everything is defined, we prove in fact the second inclusion in the following sequence (the first one is obvious):
\begin{equation}\label{main}
\JJ\big(S, \parallel (m+ 1) M_S\parallel \big) \subseteq 
\JJ\big(S, \parallel mM_S\parallel \big) \subseteq 
\end{equation}
$$\subseteq 
\JJ \big( (S, \Delta_{0,S}); \parallel mM + H  + (k-1)(K_X + S)  + \Delta_1 + \ldots  + \Delta_{k-1}\parallel _{|S} \big).$$
Let's first see that this concludes the induction step. We only need to show that the sections of this last multiplier ideal, twisted by $\OO_S ((m+1)M_S + H_S)$, can be lifted to $X$. But this follows from Basic Lifting. Indeed, note that 
$$(m+1) M + H - S = K_X + N + kB + \Delta_0,$$
where $N : = mM + H + (k-1) (K_X + S) + \Delta_1 + \ldots  + \Delta_{k-1}$.
Since $B$ is nef, Basic Lifting given by Proposition \ref{basic_lifting_2} and Remark \ref{no_restriction} applies if we have the 
following two conditions: $N$ is big, and $S \not\subset \BB (N)$. But since $M$ is pseudo-effective, $H$ can certainly be chosen 
sufficiently positive so that $N$ is big (independently of $m$), while the second condition will follow inductively from the proof.  

We are left then with proving the second inclusion in ($\ref{main}$). We do this in several steps. 
The first step is to note that
\begin{equation}\label{step1}
\JJ\big(S, \parallel mM_S\parallel \big) \subseteq 
\Adj_{\Delta_{1,S}}  (S,  \parallel mM + H \parallel_{|S} ). 
\end{equation}
This follows since on  one hand we are assuming that 
$\JJ\big(S, \parallel mM_S\parallel \big) \subseteq \frc_m$, while on the other hand by
Lemma \ref{klt_inclusion}(2)
$$\frc_m \subseteq \Adj_{\Delta_{1,S}}  (S,  \parallel mM + H \parallel_{|S} ),$$
as long as the adjoint ideal involved is defined (cf. \S3). 
To see this, we will check the stronger statement that there exist sections of 
$mM_S + H_S$ which do not vanish along any intersection of a component of ${\rm Supp}([k\Delta])$ with $S$ (by the assumption in this step, these are the only log-canonical centers of 
${\rm Supp}([k\Delta])_S$), 
and lift to $X$. By induction we know that we can lift the sections of $mM_S$ after adding $H_S$. This space could a priori be empty for some values of $m$, but since we are allowed here to work with sufficiently divisible integers, let us assume from the beginning that $k$ is chosen such that $\BB(M_S) = {\rm Bs}(p M_S)$ for all $p \ge 1$. Thus there are such nonzero sections which lift, and by the assumption on $\BB(M_S)$ they  do not vanish along the required intersections.

Note for use in the next step that as a consequence the adjoint ideal $\Adj_{S + \Delta_1} (X,  \parallel mM + H \parallel \big)$ is defined as well. Indeed, the sections we just lifted from $S$ are sections of $mM+ H$ that do not vanish along the components of $S+ \Delta_1$ intersecting $S$. But by the assumption in this step, these could have been the only components of $S + \Delta_1$ contained in $\BB (mM + H) (\subseteq \BB (M))$. Note however, as a slightly subtle point 
to come up later, that this does not mean that the components of $\Delta_1$ which intersect $S$ are not contained in 
$\BB_{-} (M)$.\footnote{It can be easily seen that, for a $\QQ$-divisor $D$ and any fixed ample $\QQ$-divisor $H$, one has 
$$\BB_{-} (D) = \bigcup_{m\ge 1} \BB(D + \frac{1}{m} H) = \bigcup_{m\ge 1} \BB_{+} 
(D + \frac{1}{m} H).$$
Cf. \cite{elmnp} Proposition 1.19 and Remark 1.20.}

The next step is to show that 
\begin{equation}\label{step2}
\JJ\big(S, \parallel mM_S\parallel \big) \subseteq 
\Adj_{\Delta_{2,S}}  (S,  \parallel mM + H + K_X + S + \Delta_1\parallel_{|S}),
\end{equation}
including the fact that the ideal on the right is defined.
To this end, note that by the usual Castelnuovo-Mumford regularity argument mentioned above we can choose $H$ positive enough (independently of $m$) so that the sheaf 
$$\OO_S (K_S + mM_S + H_S + \Delta_{1,S}) \otimes 
\JJ\big(S, \parallel mM_S\parallel \big)$$
is globally generated. Assume that the following sequence of inclusions of spaces of global sections holds, and everything is well defined:
$$ H^0 (S, \OO_S (K_S + mM_S + H_S + \Delta_{1,S}) \otimes 
\JJ\big(S,  \parallel mM_S\parallel \big)) \subseteq $$
$$\subseteq H^0 (S, \OO_S (K_S + mM_S + H_S + \Delta_{1,S}) \otimes 
\Adj_{\Delta_{1,S}}  (S,  \parallel mM + H \parallel_{|S} ))\subseteq $$
$$\subseteq H^0 (S, \OO_S (K_S + mM_S + H_S + \Delta_{1,S}) \otimes \frc_{|mM + H + K_X + S + \Delta_1|}) \subseteq $$
$$\subseteq H^0 (S, \OO_S (K_S + mM_S + H_S + \Delta_{1,S}) \otimes 
\Adj_{\Delta_{2,S}}  (S,  \parallel mM + H + K_X + S + \Delta_1\parallel_{|S})).$$
Then given the first and last terms in the sequence, and the global generation above, ($\ref{step2}$) follows.
Here is the explanation for the sequence of inclusions. The first is a direct consequence of ($\ref{step1}$).
The second follows if we show that the sections in 
$$H^0 (S, \OO_S (K_S + mM_S + H_S + \Delta_{1,S}) \otimes 
\Adj_{\Delta_{1,S}} (S,   \parallel mM + H \parallel_{|S}))$$
can be lifted to sections of $\OO_X( mM + H  + K_X+ S + \Delta_1)$. But this is once more a consequence of Proposition 
\ref{basic_lifting_2}, which uses the fact that the ideal $\Adj_{S+ \Delta_1} (X,  \parallel mM + H \parallel)$ is defined, explained in the paragraph above. Note the fact, alluded to earlier, that
Proposition \ref{basic_lifting_2} (in fact Nadel Vanishing) can be applied, since by assumption the  
components of $\Delta_1$ which intersect $S$ are not contained in $\BB_{-} (M)$, hence also not 
in $\BB_{+} (mM + H)$. 
The third one follows again from the inclusion given by Lemma \ref{klt_inclusion}(2)
$$ \frc_{|mM + H + K_X + S + \Delta_1|}\subseteq 
\Adj_{\Delta_{2,S}}  (S,  \parallel mM + H + K_X + S + \Delta_1\parallel_{|S}).$$
Here of course we denote by  $\frc_{|mM + H + K_X + S + \Delta_1|}$ the base ideal of the restriction of the corresponding 
linear series to $S$. The explanation of why the adjoint ideal on the right is defined is completely similar to that in the case of $\Adj_{\Delta_{1,S}}  (S,  \parallel mM + H \parallel_{|S} )$. The only difference is that instead of $H$ we 
have to consider the divisor $H_1: = H + K_X + S + \Delta_1$. But certainly $H$ can be made sufficiently positive, independently of $m$, so that all of the properties required of $H$ at the first step are also satisfied by $H_1$,  hence it can be used in the inductive step as well. With this
choice, let us finally observe that the sections lifted to show the second inclusion above are in fact,
after adding the ample divisor $H_S + K_S + \Delta_{1,S}$,  
all the sections in $H^0 (S, \OO_S (mM_S))$, due to the inclusion in ($\ref{step1}$).

\noindent
By the same token, the next step is to show the inclusion 
\begin{equation}\label{step3}
\JJ\big(S, \parallel mM_S\parallel \big) \subseteq 
\Adj_{\Delta_{3,S}} (S,  \parallel mM + H + 2(K_X + S) + \Delta_1 + \Delta_2 \parallel_{|S}), 
\end{equation}
and this follows in a completely similar way, including the corresponding choices that make the 
adjoint ideals involved well-defined. After performing $(k-1)$ steps, we obtain the inclusion 
\begin{equation}\label{stepk-1}
\JJ\big(S, \parallel mM_S\parallel \big) \subseteq 
\Adj_{\Delta_{k-1,S}} (S,  \parallel mM + H + (k-2)(K_X + S) + \Delta_1 + \ldots + 
\Delta_{k-2}\parallel_{|S}).
\end{equation}
The usual (by now) Basic Lifting argument based on Proposition \ref{basic_lifting_2} shows that for this last ideal we have the inclusion 
\begin{equation}\label{sections}
H^0 (S, \OO_S (N) \otimes \Adj_{\Delta_{k-1,S}} (S,  \parallel mM + H + (k-2)(K_X + S) + \Delta_1 + \ldots + 
\Delta_{k-2}\parallel_{|S})) \subseteq $$
$$\subseteq H^0 (S, \OO_S (N) \otimes \frc_{|N|}), 
\end{equation}
where we recall that we denote $N= mM + H + (k-1)(K_X + S) + \Delta_1+\ldots 
+ \Delta_{k-1}$.
Lastly, we appeal to Lemma \ref{klt_inclusion}(1) in order to deduce the inclusion 
\begin{equation}\label{final}
\frc_{|N|} \subseteq 
\JJ \big( (S, \Delta_{0,S}); \parallel N\parallel _{|S} \big).
\end{equation}
Putting ($\ref{stepk-1}$), ($\ref{sections}$) and ($\ref{final}$) together, and making sure that we chose $H$ sufficiently positive so that 
$$\OO_S (mM_S + H_S + (k-1)K_S  + \Delta_{1,S} + \ldots + \Delta_{(k-1),S}) \otimes 
\JJ\big(S, \parallel mM_S\parallel \big) $$ 
is globally generated, we finally obtain ($\ref{main}$) precisely as in the previous steps. Note that in the process we also 
produced inductively, just as above, sections in $mM_S + H_S + (k-1)K_S + \Delta_{1,S} + \ldots + \Delta_{(k-1),S}$ which lift to $X$, so we have $S \not\subset \BB(N)$,
as promised at the beginning. This completes the induction step, hence the proof of Proposition \ref{intermediate}.

\noindent
\emph{Step 5.}
Now we come to getting rid of the ample line bundle $H$ from the previous steps. We can do this using the method of Takayama, given the stronger positivity assumption that the divisor $A$ is nef and big. First we recall the following useful observation.

\begin{lemma}[\cite{takayama}, Example 2.2(1); cf. also \cite{hm2}, Lemma 3.2]\label{local_inclusion}
Let $X$ be smooth, and $(X, \Lambda)$ a klt pair on $X$. Consider a line bundle $L$ on $X$, and $s \in H^0 (X, L)$, with $D : = Z(s)$. If $B$ is an effective $\QQ$-divisor  such that $B - D \leq \Lambda$, 
then $s \in H^0 (X, L \otimes \JJ(B))$. 
\end{lemma}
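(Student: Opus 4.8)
The plan is to reduce the lemma to two elementary facts about multiplier ideals on a smooth variety: monotonicity in the divisor argument, and the rule $\JJ(D+\Lambda)=\OO_X(-D)\otimes\JJ(\Lambda)$ valid for an effective integral divisor $D$ (this is standard; cf.\ \cite{positivity} Chapter 9), together with the observation that the klt hypothesis makes $\JJ(\Lambda)=\OO_X$.

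First I would record that $D=Z(s)$ is an effective Cartier divisor, since $X$ is smooth, and that the hypothesis $B-D\le\Lambda$ is exactly the inequality $B\le D+\Lambda$ between effective $\QQ$-divisors. Monotonicity of multiplier ideals then gives $\JJ(D+\Lambda)\subseteq\JJ(B)$. Invoking the displayed rule and the equality $\JJ(\Lambda)=\OO_X$ (which holds precisely because $(X,\Lambda)$ is klt), I would conclude $\JJ(D+\Lambda)=\OO_X(-D)$, and hence $\OO_X(-D)\subseteq\JJ(B)$. For completeness one can see the rule by fixing a common log-resolution $\mu\colon X'\to X$ of $(X,D+\Lambda)$, noting that $\mu^{*}D$ is still integral so that $[\mu^{*}(D+\Lambda)]=\mu^{*}D+[\mu^{*}\Lambda]$, and applying the projection formula to $\mu_{*}\OO_{X'}\big(K_{X'/X}-\mu^{*}D-[\mu^{*}\Lambda]\big)$.

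Finally I would translate $\OO_X(-D)\subseteq\JJ(B)$ into the statement of the lemma. In a trivializing open set for $L$, the section $s$ becomes a regular function $f$ with $Z(f)=D$, that is, a local generator of the ideal sheaf $\OO_X(-D)$; hence the subsheaf of $L$ generated by $s$ is $L\otimes\OO_X(-D)$, and the condition $s\in H^{0}(X,L\otimes\JJ(B))$ is equivalent to the containment of subsheaves $L\otimes\OO_X(-D)\subseteq L\otimes\JJ(B)$, i.e.\ to $\OO_X(-D)\subseteq\JJ(B)$, which has just been established.

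I do not expect a genuine obstacle here; the statement is soft. The two points that require a little care are (i) checking that $\mu^{*}D$ remains integral, so that the round-down in the definition of $\JJ(D+\Lambda)$ cleanly splits off $\mu^{*}D$, and (ii) keeping track of the twist by $L$ when passing between the global membership $s\in H^{0}(X,L\otimes\JJ(B))$ and the purely local statement $f\in\JJ(B)$.
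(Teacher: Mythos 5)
Your proof is correct, and in fact the paper offers no argument of its own for this lemma: it is quoted directly from Takayama (Example 2.2(1)), where the justification is exactly the soft argument you give — monotonicity $\JJ(D+\Lambda)\subseteq\JJ(B)$, the splitting $\JJ(D+\Lambda)=\OO_X(-D)\otimes\JJ(\Lambda)$ for the integral divisor $D$, and $\JJ(\Lambda)=\OO_X$ from the klt hypothesis. So you have supplied, correctly, the standard proof that the paper delegates to its references.
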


\noindent
For later use, we state a slightly more explicit version of Takayama's approach than what  is needed here. The proof entirely follows that provided in \cite{takayama}, p.568.

\begin{lemma}[Precise Takayama Lemma]\label{precise_takayama}
Consider a Cartier divisor $M$ such that $M \sim_{\QQ} k (K_X + S + A + \Delta)$, where $S$ 
is a smooth irreducible divisor and $A$ and $\Delta$ are effective $\QQ$-divisors on $X$.
Let $m$ be a positive integer, and $D\in |mM_S|$ defined by a section $s$.
Suppose $l$ is a positive integer. Let $H$ be an effective Cartier divisor on $X$ 
such that $S \not\subset {\rm Supp}(H)$. Assume that the
divisor $F_l : = lD+H_S$ can be lifted to $E_l \in |lmM+H|$, and that in addition the 
following two conditions are satisfied:

\noi
(1) We have $A + \Delta \sim_{\QQ} \frac{mk-1}{lmk} H + \Delta^\prime + B$, 
where $\Delta^\prime$ is an effective $\QQ$-divisor with $S \not\subset {\rm Supp}(\Delta^\prime)$ and 
$(S, \Delta^\prime_S)$ is klt,  and $B$ is ample.

\noi
(2) The pair $(S, \frac{mk-1}{lmk} H_S -\frac{1}{mk}D + \Delta^{\prime}_S)$ is klt.\footnote{If $H$ is ample and it moves, this condition is automatically satisfied if $H$ is general in its linear system and $l\gg0$.}  

\noi
Then $D$ can be lifted to a divisor in $|mM|$. 
\end{lemma}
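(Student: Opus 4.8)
The plan is to deduce the statement from Basic Lifting (Proposition \ref{basic_lifting_1}, applied with $\Gamma=0$, so that the relevant adjoint ideal is the ordinary multiplier ideal $\JJ$; cf. Remark \ref{no_restriction}), applied to the line bundle $L:=mM-K_X-S$ and a carefully chosen effective boundary $\Theta$: it will lift every section of $\OO_S(mM_S)\otimes\JJ(S,\Theta_S)$, in particular the given section $s$, and lifting $s$ is the same as lifting $D=Z(s)$.

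\emph{Constructing $\Theta$.} First I would rewrite $mM-K_X-S$ modulo $\QQ$-linear equivalence so that a fixed effective boundary and an ample leftover remain. From $M\sim_{\QQ}k(K_X+S+A+\Delta)$ one gets $mM-K_X-S\sim_{\QQ}\frac{mk-1}{mk}mM+(A+\Delta)$. Since $E_l\in|lmM+H|$, also $mM\sim_{\QQ}\frac{1}{l}(E_l-H)$, so the right-hand side becomes $\frac{mk-1}{lmk}E_l-\frac{mk-1}{lmk}H+(A+\Delta)$; substituting hypothesis (1), namely $A+\Delta\sim_{\QQ}\frac{mk-1}{lmk}H+\Delta^{\prime}+B$, the two multiples of $H$ cancel and I obtain
$$mM-K_X-S\sim_{\QQ}\frac{mk-1}{lmk}E_l+\Delta^{\prime}+B.$$
Set $\Theta:=\frac{mk-1}{lmk}E_l+\Delta^{\prime}$, an effective $\QQ$-divisor. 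Then $S\not\subset{\rm Supp}(\Theta)$: indeed $S\not\subset{\rm Supp}(\Delta^{\prime})$ by (1), and $S\not\subset{\rm Supp}(E_l)$ because $E_l$ restricts to the honest effective divisor $lD+H_S$ on $S$ (here $D=Z(s)$ and $S\not\subset{\rm Supp}(H)$). Finally $L-\Theta=mM-K_X-S-\Theta\sim_{\QQ}B$ is ample, hence big and nef, and $\BB_{+}(L-\Theta)=\emptyset$.

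\emph{Applying Basic Lifting.} Since $X$ is smooth, $L=mM-K_X-S$ is a genuine line bundle and $K_X+S+L=mM$. By the previous step $L-\Theta$ is big and nef and $S\not\subset{\rm Supp}(\Theta)\cup\BB_{+}(L-\Theta)$, so Proposition \ref{basic_lifting_1} (equivalently, the exact sequence (\ref{sequence}) twisted by $\OO_X(mM)$ together with Nadel vanishing for $\JJ(X,\Theta)$) shows that the restriction map $H^0\big(X,\OO_X(mM)\big)\longrightarrow H^0\big(S,\OO_S(mM_S)\big)$ hits every section lying in $H^0\big(S,\OO_S(mM_S)\otimes\JJ(S,\Theta_S)\big)$. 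Thus it is enough to check that $s$ belongs to this subgroup, for then $D$ lifts to a member of $|mM|$.

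\emph{Membership of $s$, and the main obstacle.} One computes $\Theta_S=\frac{mk-1}{lmk}(lD+H_S)+\Delta^{\prime}_S=\frac{mk-1}{mk}D+\frac{mk-1}{lmk}H_S+\Delta^{\prime}_S$, so that
$$\Theta_S-D=\frac{mk-1}{lmk}H_S-\frac{1}{mk}D+\Delta^{\prime}_S,$$
which is exactly the pair required to be klt in hypothesis (2). Lemma \ref{local_inclusion}, applied on $S$ with $B=\Theta_S$, $\Lambda=\Theta_S-D$ and $D=Z(s)$ (so that $B-D=\Lambda$), then yields $s\in H^0\big(S,\OO_S(mM_S)\otimes\JJ(S,\Theta_S)\big)$, completing the argument. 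The whole proof is a single application of the adjoint–multiplier ideal machinery; the only delicate points are the $\QQ$-linear-equivalence bookkeeping in the construction of $\Theta$ — keeping the factors $\frac{mk-1}{mk}$, $\frac{1}{mk}$ and $\frac{1}{l}$ in their correct slots so that the residual divisor $B$ really comes out ample — and the observation that hypothesis (2) is precisely the klt input needed for Lemma \ref{local_inclusion}. I expect the coefficient bookkeeping to be the only genuine obstacle; everything downstream is routine.
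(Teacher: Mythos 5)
Your proposal is correct and follows essentially the same route as the paper: condition (2) plus Lemma \ref{local_inclusion} places $s$ in $\JJ\big(S,\tfrac{mk-1}{lmk}F_l+\Delta^{\prime}_S\big)=\JJ(S,\Theta_S)$, and the adjoint-ideal exact sequence with Nadel vanishing (your invocation of Proposition \ref{basic_lifting_1} with $\Gamma=0$) lifts it, the key $\QQ$-linear-equivalence computation $mM-S\sim_{\QQ}K_X+\tfrac{mk-1}{lmk}E_l+\Delta^{\prime}+B$ being identical to the paper's. Nothing to correct.
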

\begin{proof} 

\noindent
Note first that condition (2) in the statement and Lemma \ref{local_inclusion} imply that
\begin{equation}\label{claim}
s \in H^0 (S, \OO_S(mM_S) \otimes \JJ ( S,  \frac{mk-1}{lmk}  F_l  + \Delta^{\prime}_S) ).
\end{equation}
Given ($\ref{claim}$), the fact that the section $s$ can be extended to a section of $\OO_X(mM)$ follows once more by Basic Lifting, Proposition \ref{basic_lifting_1}. Indeed, we have an exact sequence
$$0 \lra \JJ(X, \frac{mk-1}{lmk} E_l + \Delta^{\prime})(-S) \lra \Adj_S (X, \frac{mk-1}{lmk} E_l + \Delta^{\prime})
\lra \JJ(S, \frac{mk-1}{lmk} F_l + \Delta^{\prime}_S) \lra 0,$$
so it is enough to check the vanishing
$$H^1(X, \OO_X(mM-S)\otimes  \JJ (X, \frac{mk-1}{lmk} E_l + \Delta^{\prime}))  = 0.$$
But note that using condition (1) we can write
$$mM - S \sim_{\QQ} K_X  + \Delta^{\prime} + B + \frac{mk-1}{k} M +\frac{mk-1}{lmk}H .$$
We can then apply Nadel Vanishing, recalling that $E_l \in |lmM + H|$.
\end{proof}

We can now continue with the proof of the Theorem. By Proposition \ref{intermediate}, we know that there exists a fixed ample divisor $H$ on $X$, with $H_S$ given by $h_S = 0$, such that for every section $s \in H^0 (S, \OO_S(mM_S))$ and every integer $l$ sufficiently divisible, the section $s^l \cdot h_S$ extends to $X$. Denote $D : = Z(s)$, and $F_l:= lD + H_S = Z(s^l\cdot h_S)$. 
Consider also $E_l \in |mlM + H|$ such that ${E_l}_S = F_l$. In order to deduce that $D$ lifts to $X$, we need to check conditions (1) and (2) in Lemma \ref{precise_takayama}, for some integer $l$. Note first that  
the assumption that $A$ is big and nef and $S \not\subset \BB_{+} (A)$ implies that we have a decomposition $A \sim_{\QQ} A^{\prime} + C$, with $A^{\prime}$ ample, $C$ effective of arbitrarily small norm, and $S\not\subset {\rm Supp}(C)$. (We can rewrite the sum as 
$((1-\epsilon)A + \epsilon A^{\prime}) + \epsilon C$, with $\epsilon$ arbitrarily small.) 
If $l$ is large enough, we have that $B : = A^{\prime} -  \frac{mk-1}{mkl}H$ is still 
ample. Thus (1) in Lemma \ref{precise_takayama} is satisfied by taking $\Delta^\prime = \Delta  + C$. Since $(S, \Delta^{\prime}_S)$
is klt, for $l$ is sufficiently large (2) is also clearly satisfied.

\noindent
\emph{Step 6.}
We finally use a trick involving the previous steps in order to deduce the full statement of the Theorem 
from that involving $\BB(M_S)$ instead of $\BB_{-} (M_S)$ and $M$ being $\QQ$-effective instead of pseudo-effective.
Let us fix an ample Cartier divisor on $X$. Now pick any $m \ge 1$. If we replace $M$ by $ M + \frac{1}{m} H$, in other 
words $A$ by $A + \frac{1}{m}H$ in the main statement, all of the hypotheses are still satisfied, and in addition 
$\BB (M_S + \frac{1}{m}  H_S) \subseteq \BB_{-} (M_S)$ and $M + \frac{1}{m} H$ is now $\QQ$-effective. Thus we can apply what 
we proved in the previous steps to deduce the surjectivity of the map
$$H^0 (X, \OO_X (mM+ H)) \longrightarrow H^0 (S, \OO_S (mM_S+ H_S)).$$
But this can be done individually for every $m$, with this fixed $H$. (Up to here this is essentially the argument of
Corollary \ref{extension2}.) Now note that Takayama's argument in Step 5 can be applied one more time to this 
situation: since $M$ is of the correct form, and $H$ is fixed, the exact same argument shows that we can get rid of 
$H$ to deduce, for all $m$, the surjectivity of 
$$H^0 (X, \OO_X (mM)) \longrightarrow H^0 (S, \OO_S (mM_S)).$$
\end{proof}

\begin{remark}
We would like to note that some of the reduction arguments in Steps 1, 2 and 3 above are similar in nature to reductions allowing the use of extension theorems towards the proof of existence of flips, appearing in \cite{hm1}, while  Step 4 uses some ideas introduced in \cite{kawamata} and extended in \cite{hm1}.  
\end{remark}

\noindent
{\bf Special log-resolutions.}
In the proof above we used a statement on the existence of resolutions separating the divisors with log-discrepancy 
strictly less than $1$ with respect to a klt pair. Corollary \ref{separation} below was already proved in \cite{km} Proposition 2.36, and also in \cite{hm2} Lemma 6.5, where it is used for similar purposes. It is enough for what we need, and could 
be simply quoted here. We however to take to opportunity 
to include a sketch of the proof of a more explicit result, describing the nature of the divisorial valuations with log-discrepancy less than $1$, which is interesting in its own right and does not seem to appear in the literature in this generality.

\begin{theorem}\label{valuations}
Let $(X,\Delta)$ be a klt pair, not necessarily effective. Then there exists only a finite number of divisorial valuations $E$ 
such that $0 < {\rm ld} (E; X, \Delta) < 1$, and they correspond to weighted blow-ups at the closed subsets $Z\subset \tilde X$ 
with ${\rm mld}(\mu_Z; \tilde X, \tilde{\Delta}) < 1$, where $(\tilde X, \tilde{\Delta})$ is a log-resolution of $(X, \Delta)$. 
\end{theorem}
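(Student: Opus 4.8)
The plan is to argue by passing to a fixed log-resolution and analyzing the behavior of log-discrepancies under further blow-ups, exactly in the spirit of the discrepancy computations that appear in Steps 1 and 2 of the proof of Theorem \ref{extension0}. First I would fix a log-resolution $g : \tilde X \to X$ of $(X, \Delta)$, and write $K_{\tilde X} + \tilde \Delta = g^*(K_X + \Delta)$, so that $(\tilde X, \tilde\Delta)$ is an SNC pair (with $\tilde\Delta$ not necessarily effective, but still klt, i.e. all coefficients $<1$). Every divisorial valuation $E$ over $X$ is also a divisorial valuation over $\tilde X$, and since log-discrepancy is invariant under the birational transformation (the pair is the pullback), we have ${\rm ld}(E; X, \Delta) = {\rm ld}(E; \tilde X, \tilde\Delta)$. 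Thus it suffices to prove the statement for the SNC pair $(\tilde X, \tilde\Delta)$, which reduces everything to a purely local, toric-type computation at the generic point of the center $Z = c_{\tilde X}(E)$.

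The heart of the argument is then the following local claim: if $(\tilde X, \tilde\Delta)$ is SNC and $Z$ is an irreducible subvariety with generic point $\eta$, then among all divisorial valuations $E$ with center $Z$, the infimum of log-discrepancies (i.e. ${\rm mld}(\mu_Z; \tilde X, \tilde\Delta)$) is achieved, and if this mld is $<1$ there are only finitely many valuations $E$ with $c_{\tilde X}(E) = Z$ and ${\rm ld}(E) < 1$; moreover they are all monomial (weighted-blow-up) valuations in suitable SNC coordinates at $\eta$. To see this, I would choose local coordinates $x_1, \dots, x_n$ at $\eta$ adapted to the SNC divisor ${\rm Supp}(\tilde\Delta) + ({\rm Exc}$ of $g)$, say with $Z = \{x_1 = \cdots = x_d = 0\}$ and $\tilde\Delta$ locally $\sum a_i \{x_i = 0\}$ with $a_i < 1$. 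For a monomial valuation $v = (w_1, \dots, w_d)$ with positive integer weights, the standard computation gives ${\rm ld}(v; \tilde X, \tilde\Delta) = \sum_{i=1}^d w_i(1 - a_i)$ (where $a_i = 0$ for $i > $ the number of components through $Z$). Since each $1 - a_i > 0$, the condition $\sum w_i(1-a_i) < 1$ cuts out only finitely many lattice points $(w_1, \dots, w_d)$, and minimizing over all (not necessarily monomial) valuations one checks — as in \cite{positivity} Lemma 9.2.19 and the independence-of-resolution computation in Lemma \ref{independence} — that the minimum is attained at one of these monomial valuations, giving ${\rm mld}(\mu_Z; \tilde X, \tilde\Delta) = \min_i (1 - a_i)$ when $Z$ is a component of $\lceil\tilde\Delta\rceil$-type, and $= d - \sum a_i$ for the "diagonal" blow-up of $Z$ itself in general. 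Finally, only finitely many subsets $Z \subset \tilde X$ can have ${\rm mld}(\mu_Z; \tilde X, \tilde\Delta) < 1$ at all: these must be among the finitely many intersections of components of ${\rm Supp}(\tilde\Delta)$ together with their subvarieties dominated by such — but a standard argument (any such $Z$ is contained in $\lceil\tilde\Delta\rceil$ and the mld can only increase as $Z$ shrinks along a stratum) bounds these to a finite list. Combining, one gets globally only finitely many $E$ with $0 < {\rm ld}(E; X, \Delta) < 1$, each a weighted blow-up at one of the relevant $Z \subset \tilde X$.

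The main obstacle I expect is the non-monomiality issue: a priori a divisorial valuation $E$ with center $Z$ on the SNC model $\tilde X$ need not be a monomial valuation in the adapted coordinates, so one must argue that any such $E$ with ${\rm ld}(E) < 1$ is nevertheless dominated (in the sense of log-discrepancy inequalities) by a monomial one, or is itself monomial. This is precisely the kind of estimate carried out in the proof of Lemma \ref{independence}, where one writes local equations $x_k = t^{s_k} u_k$ for a further exceptional divisor and bounds the coefficient $k_i$ of $K_{Z/\tilde X}$ from below by $\sum s_k - 1$; the same bookkeeping shows ${\rm ld}(E; \tilde X, \tilde\Delta) \geq \sum_k s_k(1 - a_k)$ with strict inequality unless $E$ is the monomial valuation with weights $(s_k)$, so only the monomial valuations can have log-discrepancy $< 1$ and there are finitely many of them. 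I would present this as a sketch (as the statement announces), quoting \cite{km} Proposition 2.36 / \cite{hm2} Lemma 6.5 for the finiteness consequence used in the main proof, and giving the monomial description with enough detail to make the weighted-blow-up picture precise but without belaboring the coordinate computations.
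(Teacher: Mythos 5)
Your overall strategy coincides with the paper's: pass to a log-resolution so that the pair is SNC, observe that a valuation with log-discrepancy in $(0,1)$ must be centered at a stratum of the boundary, identify the relevant valuations as monomial (weighted blow-up) ones, and count lattice points satisfying $0<\sum_i(1-d_i)a_i<1$. The gap is at exactly the point you flag as "the main obstacle": the claim that the bookkeeping of Lemma \ref{independence} (equivalently \cite{positivity} Lemma 9.2.19) shows that a non-monomial valuation cannot have log-discrepancy $<1$. That computation only produces the lower bound ${\rm ld}(E;\tilde X,\emptyset)\ge \sum_k s_k$ (hence ${\rm ld}(E;\tilde X,\tilde\Delta)\ge\sum_k s_k(1-a_k)$); it says nothing about when equality holds, and "strict inequality unless $E$ is monomial" is precisely the statement that does not follow from it. Moreover, even granting strictness in the form you state it (with the boundary included), that alone is not enough: ${\rm ld}(E;\tilde X,\tilde\Delta)$ could still be $<1$ if the strict inequality were by a small amount. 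What one actually needs is the quantitative dichotomy for the empty boundary: ${\rm ld}(E;\tilde X,\emptyset)\ge\sum_k s_k$, with equality if and only if $E$ is the exceptional divisor of the weighted blow-up with weights $(s_k)$, so that in the non-monomial case integrality forces ${\rm ld}(E;\tilde X,\emptyset)\ge\sum_k s_k+1$ and hence ${\rm ld}(E;\tilde X,\tilde\Delta)\ge 1+\sum_k s_k(1-a_k)>1$.

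This equality characterization is the genuinely nontrivial input, and the paper does not derive it from resolution bookkeeping: it quotes Theorem 0.3 of de Fernex--Ein--Ishii \cite{dfei}, whose proof uses the arc-space/contact-loci methods of \cite{elm}. So your reduction to the SNC model, the formula ${\rm ld}(v)=\sum_i w_i(1-a_i)$ for monomial valuations, and the finiteness of lattice solutions are all fine and match the paper, but the proposal as written replaces the one hard step by a reference to an estimate that does not prove it. To repair it you would either have to cite \cite{dfei} (as the paper does), or supply an independent proof of the statement "${\rm ld}(E;\tilde X,\emptyset)=\sum_k {\rm val}_E(x_k)$ forces $E$ to be the corresponding weighted blow-up valuation" (e.g.\ by a careful induction extracting $E$ through a sequence of blow-ups), which is substantially more work than the local computation in Lemma \ref{independence}. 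Note also that citing \cite{km} Proposition 2.36 or \cite{hm2} Lemma 6.5 only recovers the finiteness/disjointness statement of Corollary \ref{separation}, not the weighted blow-up description that is the actual content of Theorem \ref{valuations}.
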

\begin{proof}
We appeal to the following statement in \cite{dfei}, Theorem 0.3: \emph{Let $X$ be an
$n$-dimensional smooth variety, $D = D_1+ \ldots +D_n$ a simple normal crossings divisor on $X$, 
and consider $Z$ a component of $D_1 \cap \ldots \cap D_m$ with $m \le n$. Let $x_1, \ldots, x_m$ be part of a 
system of parameters around the generic point of $Z$ such that $D_i = (x_i = 0)$. Let $E$ be a divisorial valuation centered at $Z$. Assume that there are positive integers $a_1, \ldots, a_m$ such that ${\rm val}_E (x_i) 
\ge a_i$ for all $1\le i\le m$. Then we have ${\rm ld}(E; X, \emptyset) \ge \sum_{i =1}^m a_i$, and equality holds if and only if $E$ is the exceptional divisor of the weighted blow-up along $Z$ with weights $a_1, \ldots, a_m$.}

This is stated in \cite{dfei} in the case of the standard coordinate system in $\CC^n$, but since a coordinate system determines locally an \'etale morphism $X \rightarrow \CC^n$, and log-discrepancies are preserved by such maps, the general statement follows immediately by base-change. The proof is based on the arc space methods in \cite{elm}.

To deduce the statement of the Theorem, we first reduce to the smooth and SNC case by passing to a log-resolution.  We write $\Delta = \sum_i d_i D_i$, 
with $d_i <1$, and assume that locally $D_i = (x_i = 0)$. We consider a center $Z$ of a divisorial valuation $E$ as in the statement. Writing ${\rm val}_E (x_i) = a_i$, we obtain
$${\rm ld} (E; X, \Delta) \ge \sum_i a_i - \sum_i d_i^\prime a_i = \sum_i (1-d_i^\prime ) a_i,$$
where the inequality comes from the fact stated above that ${\rm ld}(E; X, \emptyset) \ge \sum_i a_i$.
Here $d_i^\prime$ is either $d_i$ or $0$, and hence we want to impose  
$\sum_i (1-d_i^\prime ) a_i < 1$. Since the $a_i$ are positive integers, we must have all 
$d_i^\prime = d_i$, i.e. the center $Z$ is in fact an intersection of components of $\Delta$.
By the statement from \cite{dfei} above, if $E$ is not the exceptional divisor of a weighted blow-up, then we must in fact have ${\rm ld}(E; X, \emptyset) \ge \sum_i a_i + 1$, so by adding $1$ to the expression above we see that the log-discrepancy cannot be less than $1$. We have thus concluded that 
$E$ is must be the exceptional divisor of a weighted blow-up with weights $a_1, \ldots, a_n$ positive 
integers such that $0 < \sum_i (1-d_i) a_i < 1$, with $d_i < 1$ fixed rational numbers. This has only a
finite number of solutions in the $a_i$.
\end{proof}

\begin{corollary}\label{separation}
Let $(X, \Delta)$ be a klt pair. Then there exists a log-resolution on which all the divisors 
$E$ such that $0 < {\rm ld} (E; X, \Delta) < 1$ are disjoint.
\end{corollary}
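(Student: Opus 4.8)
The plan is to deduce this directly from Theorem \ref{valuations}, whose role is to supply the crucial finiteness. Write $E_1,\dots,E_r$ for the divisorial valuations with $0<{\rm ld}(E_i;X,\Delta)<1$; by Theorem \ref{valuations} this is a complete and finite list. First I would pass to a log-resolution $g\colon W_0\to X$ of $(X,\Delta)$ on which \emph{all} of $E_1,\dots,E_r$ appear as honest prime divisors. Such a $W_0$ exists: by Theorem \ref{valuations} each $E_i$ is the exceptional divisor of a weighted blow-up of a stratum $Z_i$ of an SNC model of $(X,\Delta)$, so one may first perform those finitely many weighted blow-ups, and then blow up further in smooth centers to make the total configuration simple normal crossings, which does not destroy divisors already present. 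On $W_0$ the prime divisors with log-discrepancy in $(0,1)$ are exactly $E_1,\dots,E_r$; call these the \emph{marked} divisors.

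The core of the argument is an iteration. If two marked divisors $E_i,E_j$ ($i\neq j$) meet, their intersection $Z:=\overline{E_i\cap E_j}$ is smooth (being a stratum of an SNC divisor), and I would blow it up to obtain $W_1$. This $W_1$ is again a log-resolution of $(X,\Delta)$, and — here Theorem \ref{valuations} is used a second time — the new exceptional divisor $F$ is \emph{not} marked: $F$ is $(W_1/W_0)$-exceptional, hence distinct as a valuation from each $E_i$, which is already a prime divisor on $W_0$, so by the completeness of the list in Theorem \ref{valuations} one cannot have ${\rm ld}(F;X,\Delta)<1$. Thus the marked divisors on $W_1$ are precisely the strict transforms of the $E_i$. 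A local computation in SNC coordinates shows that blowing up $Z\subseteq E_i\cap E_j$ makes the strict transforms of $E_i$ and $E_j$ disjoint (they end up in complementary charts of the blow-up) while introducing no new intersections among strict transforms of the remaining marked divisors (for any other pair $E_k,E_l$, the blow-up only alters a neighbourhood of $Z\cap E_k\cap E_l$, a locus along which they already intersected, so a pair disjoint before stays disjoint). Hence the number of intersecting pairs of marked divisors strictly decreases at each step. Since this number is finite, after finitely many blow-ups we reach a log-resolution on which all marked divisors — equivalently, by Theorem \ref{valuations}, all divisors of log-discrepancy in $(0,1)$ — are pairwise disjoint.

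The only genuinely non-formal point is the local computation in the previous paragraph, which however is routine once everything is simple normal crossings; the rest is bookkeeping with log-resolutions and smooth blow-ups. I would also remark that the same conclusion can be obtained more explicitly by a toroidal construction: locally along each stratum the bad valuations are monomial, corresponding to finitely many rays inside the associated orthant, and one takes a smooth subdivision containing all these rays in which no two of them span a common cone, so that the corresponding divisors become pairwise disjoint. Either route recovers the statements of \cite{km} Proposition 2.36 and \cite{hm2} Lemma 6.5 in the form needed here.
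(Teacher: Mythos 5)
Your proof is correct and follows essentially the same route as the paper: the paper's own argument is just the observation that Theorem \ref{valuations} leaves only finitely many divisorial valuations with log-discrepancy in $(0,1)$, after which ``the corresponding divisors can be made disjoint after possibly blowing up and going to a higher model.'' Your iteration (blowing up intersections of marked divisors, using the completeness of the finite list to see that new exceptional divisors are never marked) simply makes explicit the separation step that the paper asserts without detail, matching the cited references \cite{km} Proposition 2.36 and \cite{hm2} Lemma 6.5.
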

\begin{proof}
Since by Theorem \ref{valuations} there exist only a finite number of divisorial valuations with log-discrepancy between $0$
and $1$, the corresponding divisors can be made disjoint after possibly blowing up and going to a higher model.
\end{proof}

\section{Results in the relative setting and for fibers of families}

Theorem \ref{extension0} has an obvious relative version, with essentially the same proof. We state it below for later use.

\begin{theorem}\label{relative}
Let $\pi : X \rightarrow T$ be a projective morphism from a normal variety $X$ to a normal affine variety $T$, and $S \subset X$ a normal irreducible Cartier divisor. Let $\Delta$ be an effective $\QQ$-divisor on $X$ such that $[\Delta] = 0$ and $S \not \subset {\rm Supp}(\Delta)$. Let $A$ be a $\pi$-nef and big $\QQ$-Cartier divisor such that $S \not\subseteq \BB_{+} (A)$, and $k$ a positive integer such that $M =  k(K_X + A + \Delta)$ is Cartier. Assume the following:
\begin{itemize}
\item $(X, S + \Delta)$ is a plt pair. 
\item $M$ is $\pi$-pseudo-effective.
\item the restricted base locus $\BB_{-} (M)$ does not contain any irreducible closed subset $W\subset X$ 
with ${\rm mld}(\mu_W ; X, \Delta + S) < 1$ which intersects $S$. 
\item the restricted base locus $\BB_{-} (M_S)$ does not contain any irreducible closed subset 
$W\subset S$ 
with ${\rm mld}(\mu_W ; S, \Delta_S) < 1$.
\end{itemize}
Then the restriction maps 
$$\pi_* \OO_X (mM)) \longrightarrow \pi_*\OO_S (mM_S)$$
are surjective for all $m \ge 1$.  Moreover, if $X$ is smooth, it is enough to assume that $M\sim_{\pi,\QQ} k(K_X + A + \Delta)$, with $M$ Cartier.
\end{theorem}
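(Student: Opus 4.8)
The plan is to reduce this relative statement to the projective case already established in Theorem \ref{extension0}, exploiting that $T$ is affine. Since a morphism of coherent sheaves on an affine scheme is surjective exactly when it is surjective on global sections, and $H^0(T, \pi_*\mathcal{F}) = H^0(X, \mathcal{F})$, the surjectivity of $\pi_*\OO_X(mM) \to \pi_*\OO_S(mM_S)$ is equivalent to that of
$$H^0(X, \OO_X(mM)) \longrightarrow H^0(S, \OO_S(mM_S))$$
for every $m \ge 1$. From here I would run the six-step proof of Theorem \ref{extension0} essentially verbatim, reading every absolute notion in it relatively over $T$.

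The translation dictionary is straightforward. Absolute base loci $\BB$, $\BB_-$, $\BB_+$ become the corresponding relative ones, and Lemma \ref{inclusions}, being a formal statement about pullbacks along a birational morphism over $T$, holds unchanged; the ample auxiliary divisor $H$ of Steps 3--5 becomes a $\pi$-ample divisor, which exists since $\pi$ is projective; ``big and nef'' becomes ``$\pi$-big and $\pi$-nef'' throughout; and the decomposition $A \sim_{\QQ} A' + C$ with $A'$ ample, $C$ effective of arbitrarily small norm and $S \not\subset {\rm Supp}(C)$, used in Step 5, is replaced by its $\pi$-ample analogue, available because $A$ is $\pi$-big and $\pi$-nef with $S \not\subseteq \BB_+(A)$. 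The discrepancy computations of Steps 1--2, the reductions modifying $\Delta$, and the separation of pseudo non-canonical centers are all local on $X$, hence unaffected, and a birational model $f : Y \to X$ over $T$ interacts with $\pi$-positivity exactly as in the absolute case.

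The one input that genuinely needs care is that the vanishing and global-generation statements survive, and here affineness of $T$ again does the work. The relative forms of Kawamata--Viehweg and Nadel vanishing, and of local vanishing, give $R^i\pi_*\mathcal{F} = 0$ for $i > 0$, and then $H^i(X, \mathcal{F}) = 0$ follows from the Leray spectral sequence since $T$ is affine; likewise the Castelnuovo--Mumford regularity arguments go through relatively and still yield globally generated sheaves over the affine base. Consequently Basic Lifting (Propositions \ref{basic_lifting_1} and \ref{basic_lifting_2}) and the vanishing theorems for adjoint and asymptotic adjoint ideals (Theorems \ref{adjoint_nadel} and \ref{asymptotic_nadel}) have the relative versions one needs, proved in the same way. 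Note also that the asymptotic ideals $\JJ(S, \parallel mM_S \parallel)$, $\frc_m$ and their companions, built from the relative linear series of $mM$, agree over the affine $T$ with their absolute counterparts, since the base ideal of $|mM|$ is computed from $H^0(X, \OO_X(mM))$, which is unchanged.

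I expect no conceptual obstacle; the labor is purely bookkeeping. The point to watch is that, throughout the long inductive Step 4, every appeal to Nadel vanishing, local vanishing, or global generation is replaced by its relative analogue, and that the single $\pi$-ample divisor $H$ can still be chosen positive enough, independently of $m$, to satisfy the finitely many relative generation conditions. Since each of these is a relative Serre- or regularity-type statement over the affine $T$, this is routine, and no idea beyond those in the proof of Theorem \ref{extension0} enters.
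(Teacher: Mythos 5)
Your proposal is correct and coincides with the paper's intent: the paper offers no separate argument for Theorem \ref{relative}, stating only that it is the obvious relative version of Theorem \ref{extension0} with essentially the same proof. Your reduction via affineness of $T$ (surjectivity of $\pi_*\OO_X(mM)\to\pi_*\OO_S(mM_S)$ equals surjectivity on global sections), together with replacing Nadel/local vanishing and the Castelnuovo--Mumford generation arguments by their relative analogues killed by the Leray spectral sequence over the affine base, is exactly the bookkeeping the authors leave to the reader.
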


As mentioned in the Introduction, Theorem \ref{relative} can be used to prove a deformation-invariance-type statement, sometimes without an a priori (pseudo-)effectivity condition on $M$. The proof below was inspired by discussions with R. Lazarsfeld and M. Musta\c t\u a, and we thank them for allowing us to include these ideas.

\begin{proof} (\emph{of Theorem \ref{fibration}}.)
Fix a point $0\in T$. Upon allowing to shrink $T$ around $0$, it is standard (cf. \cite{positivity} p.314) that the result can be reduced to the 
following extension statement: assume all the singularity and transversality hypotheses in the statement only with 
respect to $X_0$. Then the restriction maps 
$$H^0 (X,  \OO_X (mM)) \longrightarrow H^0 (X_0, \OO_{X_0} (mM_{X_0}))$$
are surjective for all $m \ge 1$. 

Assuming then that $T$ is affine, it is clear that Theorem \ref{relative}  can be applied to deduce this when $M$ is pseudo-effective, so we only need to concentrate on part (ii). Assuming that $(X_0, k\Delta_{X_0})$ is klt, 
we need to show that the hypothesis of $M$ being pseudo-effective is not needed. Indeed, by carefully inspecting the proof of
Theorem \ref{extension0}, one notes that the hypotheses on $\BB_{-} (M_{X_t})$ and $\BB_{-} (M)$ (which could a priori be the whole $X$) are trivially satisfied: indeed, the only non-canonical centers that could affect the definition of any adoint ideal would have to 
come from $[k\Delta]$.
On the other hand, the pseudo-effectivity of $M$ was needed in the proof of Theorem \ref{extension0} in order to deduce two other things:
\begin{enumerate}
\item  $mM+ H$ is big for any $m\ge 1$ and any ample $H$, and thus 
vanishing theorems can be applied.
\item One can reduce to the case when no pseudo non-canonical centers of $\Delta$ are disjoint from the hypersurface $S$. 
\end{enumerate}
Item (2) can be easily dealt with in our situation by simply shrinking $T$, since no pseudo non-canonical center 
of $\Delta$ disjoint from $X_0$ can dominate $T$. Thus the key point is to show that in the case of 
fibrations $M$ is automatically $\pi$-pseudo-effective if $M_{X_0}$ is assumed to be so.\footnote{This is certainly the case by assumption if $\Delta \neq 0$, and is in any case necessary to make the result 
non-trivial.} 

To prove this, assume by contradiction that $M_{X_0}$ is pseudo-effective, but $M_{X_t}$ is not pseudo-effective for general $t \in T$. We can find a $\pi$-ample $\QQ$-divisor $B$ on $X$ such that $M_{X_t} + B_{X_t}$ is big for general $t\in T$, but arbitrarily close to the boundary of the effective 
cone of $X_t$. Recall now that one can define a volume function on numerical $\QQ$-divisor clasess
by 
$${\rm vol}(D) = \limsup_{m\to\infty}\frac{n!\cdot h^0(X, \OO_X(mD))}{m^n},$$ 
where the limit is taken over $m$ sufficiently divisible (cf. \cite{positivity} 2.2.C). This can be extended to a continuous function on the Neron-Severi space, which is $0$ on the boundary of the pseudo-effective cone (cf. \cite{positivity} Corollary 2.2.45). The continuity of this function 
implies then that for such a choice we have 
$$0 < \vol (M_{X_t} + B_{X_t}) \ll1.$$ 
Now the divisor $M + B$ is $\pi$-big and still satisfies the hypothesis of Theorem \ref{relative} (as the base locus can only get smaller), so this theorem and the reasoning above can be applied directly to deduce invariance with $t$ for the sections of $H^0 (X_t , \OO_{X_t}
(m (M_{X_t} + B_{X_t})))$, and hence
$$\vol (M_{X_0}  + B_{X_0}) =  \vol (M_{X_t}  + B_{X_t}), {\rm~for~general~}t.$$
On the other hand, since $M_{X_0}$ is pseudo-effective, on the central fiber we have that
$$\vol (M_{X_0}  + B_{X_0}) \ge \vol (B_{X_0}),$$
which is greater than some fixed strictly positive constant  (as $M_{X_0}$ is assumed to be outside of the pseudo-effective cone of 
$X_t$). This gives a contradiction.
\end{proof}

It is worth recording this last part of the proof as a result interesting in its own. Note that if the central fiber of the family 
is normal, than this is the case for all the fibers in a neighborhood.

\begin{corollary}\label{pseudoeffective}
Let $\pi : X \rightarrow T$ be a projective morphism from a variety $X$ to a smooth curve $T$. Let $\Delta$ be a horizontal effective 
$\QQ$-divisor on $X$ such that $[\Delta] = 0$. Let $A$ be a $\pi$-nef $\QQ$-divisor and $k$ a positive integer such that $M = k(K_X + A + \Delta)$ is Cartier. Denote by $X_t$ the fiber of $\pi$ over $t\in T$, and assume that $X_0$ is normal for some $0 \in T$. Assume the following:
\begin{itemize}
\item $(X, X_0 + \Delta)$ is a plt pair. 
\item $(X_0, k\Delta_{X_0})$ is a klt pair. 
\end{itemize}
Under these assumptions, if $M_{X_0}$ is pseudo-effective, then $M$ is 
$\pi$-pseudo-effective (i.e. $M_{X_t}$ is pseudo-effective for general $t\in T$). 
\end{corollary}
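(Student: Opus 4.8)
The plan is to derive Corollary \ref{pseudoeffective} as essentially the last paragraph of the proof of Theorem \ref{fibration}(ii), distilled into a standalone statement. First I would reduce to the case where $T$ is affine (in fact, a neighborhood of $0$) and where $X_0$ and all nearby fibers are normal, which is possible since normality of the central fiber of a projective family is an open condition on the base. Since $\Delta$ is horizontal, no component of $\Delta$ is contained in a fiber, so after shrinking $T$ we may assume $X_0 \not\subset \mathrm{Supp}(\Delta)$ and that all the hypotheses of Theorem \ref{relative} that do not involve pseudo-effectivity or the restricted base loci hold with respect to $X_0$; the loci hypotheses are automatic here because $(X_0, k\Delta_{X_0})$ being klt forces every pseudo non-canonical center affecting the relevant adjoint ideals to come from $\lceil k\Delta\rceil$, exactly as noted in the proof of Theorem \ref{fibration}.

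Next I would argue by contradiction: suppose $M_{X_0}$ is pseudo-effective but $M_{X_t}$ is not pseudo-effective for general $t$. Choose a $\pi$-ample $\QQ$-divisor $B$ with $M_{X_t}+B_{X_t}$ big but of arbitrarily small volume for general $t$; by continuity of the volume function on the N\'eron--Severi space and the fact that it vanishes on the boundary of the pseudo-effective cone (cf. \cite{positivity} Corollary 2.2.45), this is possible, and in particular $0 < \mathrm{vol}(M_{X_t}+B_{X_t}) \ll 1$. The divisor $M+B$ is $\pi$-big and $\pi$-pseudo-effective, of the form $k(K_X + (A+\frac{1}{k}B) + \Delta)$ with $A + \frac1k B$ $\pi$-nef and big, so Theorem \ref{relative} applies to it (the relevant base loci only shrink). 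Combining this with the standard reduction on \cite{positivity} p.314 from the relative surjectivity to the fiberwise statement, and the semicontinuity/invariance argument, we deduce that $h^0(X_t, \OO_{X_t}(m(M_{X_t}+B_{X_t})))$ is constant for general $t$, hence $\mathrm{vol}(M_{X_0}+B_{X_0}) = \mathrm{vol}(M_{X_t}+B_{X_t})$.

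Finally I would close the contradiction: on the central fiber $M_{X_0}$ is pseudo-effective, so $\mathrm{vol}(M_{X_0}+B_{X_0}) \ge \mathrm{vol}(B_{X_0})$, which is bounded below by a fixed positive constant independent of how $B$ was chosen near the boundary (since $B_{X_0}$ is a fixed ample class on $X_0$, or can be taken so). This contradicts the earlier estimate $\mathrm{vol}(M_{X_t}+B_{X_t}) \ll 1$, and therefore $M_{X_t}$ must be pseudo-effective for general $t$.

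The main obstacle I anticipate is making the two-scale comparison of volumes precise: one needs $\mathrm{vol}(B_{X_0})$ to stay bounded away from zero \emph{uniformly} as $B$ varies toward the boundary of the effective cone of the general fiber, while simultaneously $\mathrm{vol}(M_{X_t}+B_{X_t})$ is forced to be small. The cleanest way around this is to fix a single $\pi$-ample $B_0$ once and for all and take $B = \varepsilon B_0$ with $\varepsilon \to 0^+$: then $\mathrm{vol}(B_{X_0}) = \varepsilon^n \mathrm{vol}(B_{0,X_0})$ also shrinks, so instead one should compare normalized volumes, or rather observe that $\mathrm{vol}(M_{X_0}+\varepsilon B_{0,X_0}) \ge \mathrm{vol}(\varepsilon B_{0,X_0})$ is \emph{not} what gives the contradiction — rather, since $M_{X_t}$ lies strictly outside the pseudo-effective cone of the general fiber (a closed condition up to specialization), the distance from $M_{X_t}$ to that cone is bounded below, forcing $\mathrm{vol}(M_{X_t}+\varepsilon B_{0,X_t}) = 0$ for $\varepsilon$ small, whereas $\mathrm{vol}(M_{X_0}+\varepsilon B_{0,X_0}) > 0$ since $M_{X_0}$ is pseudo-effective and $\varepsilon B_{0,X_0}$ is ample — the desired contradiction with invariance. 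Getting this openness/closedness bookkeeping exactly right is the delicate point; everything else is a direct quotation of Theorem \ref{relative} and the continuity of the volume.
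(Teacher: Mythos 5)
Your first two paragraphs, and the opening of your third, coincide with the paper's own argument: argue by contradiction, choose a $\pi$-ample $B$ so that $M_{X_t}+B_{X_t}$ is big but of very small volume for general $t$, apply Theorem \ref{relative} to $M+B$ (legitimate, since $A+\frac{1}{k}B$ is $\pi$-ample, $M+B$ is $\pi$-big, and the klt hypothesis makes the base-locus conditions vacuous) to get $\vol(M_{X_0}+B_{X_0})=\vol(M_{X_t}+B_{X_t})$, and compare with $\vol(M_{X_0}+B_{X_0})\ge\vol(B_{X_0})$.

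The problem is your final ``fix''. First, the obstacle you worry about is not actually there: precisely because $M_{X_t}$ lies outside the pseudo-effective cone of the general fiber, any $B$ making $M_{X_t}+B_{X_t}$ big is automatically bounded below. Concretely, fix a $\pi$-ample $B_0$ and let $\lambda_0>0$ be the pseudo-effective threshold of $M_{X_t}$ with respect to $B_{0,X_t}$; taking $B=\lambda B_0$ with $\lambda$ slightly larger than $\lambda_0$ makes $\vol(M_{X_t}+B_{X_t})$ arbitrarily small (continuity of $\vol$, which vanishes on the boundary of the cone), while $\vol(B_{X_0})=\lambda^n\vol(B_{0,X_0})\ge\lambda_0^n\vol(B_{0,X_0})$ is a fixed positive constant: the smallness comes from $\lambda\downarrow\lambda_0$, not from $B\to 0$, so the two scales never conflict. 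Second, your proposed replacement $B=\varepsilon B_0$ with $\varepsilon\to 0^+$ destroys the one ingredient you need: for $\varepsilon$ below the threshold, $M+\varepsilon B_0$ is not $\pi$-pseudo-effective (that is exactly why $\vol(M_{X_t}+\varepsilon B_{0,X_t})=0$), so Theorem \ref{relative} --- which lists $\pi$-pseudo-effectivity among its hypotheses, and whose proof needs $mM+H$ big in order to apply the vanishing theorems --- gives no invariance of $h^0$, hence none of volumes, for this divisor, and there is nothing to contradict. The only statement in the paper that yields invariance without pseudo-effectivity is Theorem \ref{fibration}(ii), i.e.\ the result whose key step you set out to re-derive; quoting it here is exactly the one-line \emph{alternative} proof the paper mentions, but inside your intended standalone derivation it is circular. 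So either keep the paper's choice of $B$ just above the threshold (your first two paragraphs then go through unchanged), or drop the standalone derivation and simply cite Theorem \ref{fibration}(ii) applied to $M+\varepsilon B_0$.
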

\begin{proof}
Note that we are only  assuming $A$ to be nef. One way is to simply note that the final paragraph in the proof of Theorem 
\ref{fibration} works also in this case, since we are adding the ample line bundle $B$. Alternatively, by Theorem \ref{fibration}, 
we have invariance of volumes after adding any arbitrarily small $\pi$-ample divisor to $M$, which says the same thing. 
\end{proof}

This applies to the invariance of part of the boundary of the pseudo-effective cone, just as Siu's original extension results imply the invariance of the Kodaira energy of a $\pi$-ample divisor.\footnote{Recall that the Kodaira energy of a big $\QQ$-divisor $D$ is ${\rm inf} \{ t\in \QQ ~| ~K_X + t D {\rm~ is~ pseudoeffective}\}$.}

\begin{corollary}\label{boundary}
Under the assumptions of Corollary \ref{pseudoeffective}, if $M_{X_0} \in \partial \overline{{\rm Eff}} (X_0)$, then 
$M_{X_t} \in \partial \overline{{\rm Eff}} (X_t)$ for $t\in T$ general. If in addition $A$ is assumed to be $\pi$-big, then the 
converse is also true.
\end{corollary}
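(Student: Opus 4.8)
The plan is to obtain Corollary \ref{boundary} as a direct consequence of Theorem \ref{fibration}, Corollary \ref{pseudoeffective} and the continuity of the volume function, using the standard device of perturbing by a small $\pi$-ample divisor. Recall first that $M_{X_t} \in \partial \overline{{\rm Eff}}(X_t)$ precisely when $M_{X_t}$ is pseudo-effective and ${\rm vol}(M_{X_t}) = 0$, since the big cone is the interior of the pseudo-effective cone and ${\rm vol}$ is strictly positive exactly there. Shrinking $T$ around the given point $0$ — which does not affect the statement — we may assume that $(X, X_t + \Delta)$ is plt, $(X_t, k\Delta_{X_t})$ is klt, and $X_t$ is normal for every $t$, all three being open conditions on $t$; as in the proof of Theorem \ref{fibration}, under these assumptions the hypotheses on $\BB_{-}(M)$ and $\BB_{-}(M_{X_t})$ in that theorem are automatically satisfied (they involve only pseudo non-canonical centers, and $[k\Delta] = 0$). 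Hence Theorem \ref{fibration}(i) applies to $M$ itself as soon as we know $M$ is $\pi$-pseudo-effective and $A$ is $\pi$-big, and it applies to every perturbation $nM + B$, with $B$ a $\pi$-ample Cartier divisor and $n \ge 1$, because $nM + B = nk\big(K_X + (A + \tfrac{1}{nk}B) + \Delta\big)$ is $\pi$-big and $A + \tfrac{1}{nk}B$ is still $\pi$-big and nef; in each case we will only use the resulting invariance, over $t$, of the volumes of the relevant fibers.

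For the first assertion, assume $M_{X_0} \in \partial \overline{{\rm Eff}}(X_0)$, so in particular $M_{X_0}$ is pseudo-effective. By Corollary \ref{pseudoeffective}, $M$ is $\pi$-pseudo-effective, hence $M_{X_t}$ is pseudo-effective for general $t$, and it remains to show ${\rm vol}(M_{X_t}) = 0$ there. Fix a $\pi$-ample Cartier divisor $B$. For each $n \ge 1$, Theorem \ref{fibration}(i) applied to $nM + B$ gives that $h^0\big(X_t, \OO_{X_t}(m(nM+B)_{X_t})\big)$ is independent of $t$ for all $m$, hence, after rescaling, that ${\rm vol}\big(M_{X_t} + \tfrac1n B_{X_t}\big)$ is independent of $t$; in particular it equals ${\rm vol}\big(M_{X_0} + \tfrac1n B_{X_0}\big)$. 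By monotonicity of the volume, ${\rm vol}(M_{X_t}) \le {\rm vol}\big(M_{X_0} + \tfrac1n B_{X_0}\big)$; letting $n \to \infty$ and using the continuity of the volume on the Neron-Severi space of $X_0$ (\cite{positivity} Corollary 2.2.45) together with ${\rm vol}(M_{X_0}) = 0$, we conclude ${\rm vol}(M_{X_t}) = 0$. Since, once its hypotheses hold, Theorem \ref{fibration}(i) yields constancy for all $t$ in the (shrunk) $T$, all values of $n$ are handled simultaneously, and $M_{X_t} \in \partial \overline{{\rm Eff}}(X_t)$ for general $t$.

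For the converse, assume in addition that $A$ is $\pi$-big and that $M_{X_t} \in \partial \overline{{\rm Eff}}(X_t)$ for general $t$. Then $M_{X_t}$ is pseudo-effective for general $t$, i.e. $M$ is $\pi$-pseudo-effective, and now Theorem \ref{fibration}(i) applies to $M$ itself, giving that ${\rm vol}(M_{X_t})$ is independent of $t$ and therefore equal to $0$: so $M_{X_0}$ is not big. It remains to see that $M_{X_0}$ is pseudo-effective. With $B$ a fixed $\pi$-ample Cartier divisor, the argument of the previous paragraph gives ${\rm vol}\big(M_{X_0} + \tfrac1n B_{X_0}\big) = {\rm vol}\big(M_{X_t} + \tfrac1n B_{X_t}\big)$ for general $t$, and the right-hand side is strictly positive because $M_{X_t} + \tfrac1n B_{X_t}$ is big (pseudo-effective plus ample). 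Hence $M_{X_0} + \tfrac1n B_{X_0}$ is big for every $n$, so $M_{X_0}$ lies in the closure of the big cone of $X_0$, which is $\overline{{\rm Eff}}(X_0)$. Together with ${\rm vol}(M_{X_0}) = 0$, this is exactly the assertion $M_{X_0} \in \partial \overline{{\rm Eff}}(X_0)$.

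The statement is thus a repackaging of tools already in place, and the points needing attention are essentially bookkeeping: the reduction by shrinking $T$ so that the running hypotheses of Theorem \ref{fibration} hold and its base-locus conditions become vacuous; keeping the perturbed divisors Cartier by passing from $M + \tfrac1n B$ to $nM + B$, so that ``$m$ sufficiently divisible'' in Theorem \ref{fibration} suffices; and the routine rescaling relating their volumes. The hypothesis that $A$ be $\pi$-big enters in the converse so that Theorem \ref{fibration}(i) is directly applicable to $M$, which is what forces ${\rm vol}(M_{X_0}) = 0$; the only mildly substantive point is then the separate verification that $M_{X_0}$ is genuinely pseudo-effective and not merely non-big, which is obtained above by realizing $M_{X_0}$ as a limit of big classes rather than by a specialization argument in the Neron-Severi lattice.
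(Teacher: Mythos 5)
Your proof is correct and is essentially the paper's own argument: pseudo-effectivity is transferred by Corollary \ref{pseudoeffective}, the equalities $\vol(M_{X_0}+\tfrac1n B_{X_0})=\vol(M_{X_t}+\tfrac1n B_{X_t})$ (and $\vol(M_{X_0})=\vol(M_{X_t})$ when $A$ is $\pi$-big) come from Theorem \ref{fibration}, and continuity of the volume finishes both directions. The only (harmless) deviation is that for the converse you obtain pseudo-effectivity of $M_{X_0}$ by exhibiting it as a limit of the big classes $M_{X_0}+\tfrac1n B_{X_0}$ via the volume equality, where the paper simply invokes semicontinuity; your extra care with Cartier-ness (working with $nM+B$) and with uniformity in $n$ is just bookkeeping already implicit in the paper.
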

\begin{proof}
By the previous Corollary and semicontinuity, we have that $M_{X_0} \in  \overline{{\rm Eff}} (X_0)$ if and only if $M_{X_t} \in 
\overline{{\rm Eff}} (X_t)$ for $t$ general. The statement follows immediately from the fact that Theorem \ref{fibration} implies 
$\vol (M_{X_0}  + B_{X_0}) =  \vol (M_{X_t}  + B_{X_t})$ for general t and any $\pi$-ample $\QQ$-divisor $B$, and also 
$\vol (M_{X_0}) =  \vol (M_{X_t})$ in case $A$ is $\pi$-nef and big (it can be easily checked that 
we cannot have $X_0 \subseteq \BB_{+} (A)$).
\end{proof}

\section{Some remarks on finite generation}

In this section we observe that the main extension result proved here, Theorem \ref{extension0}, and Takayama's technique described in Lemma \ref{precise_takayama}, combine to provide a quick proof 
of part of the finite generation statements involved in the Hacon-M$^c$Kernan  proof of existence of flips. As the new \cite{hm3} has become available, we can point to the fact that the result below is very similar to Theorem 6.2 there. The main idea is in any case due to Hacon and M$^c$Kernan, the contribution here being to streamline the argument using the tools mentioned above.  

Let $(X, \Delta)$ be a log-pair, with $X$ a normal projective variety and $\Delta$ an effective $\QQ$-divisor with $[\Delta] = 0$. Let $S \subset X$ be an irreducible normal effective Cartier divisor with $S \not\subset {\rm Supp}(\Delta)$, such that $(X, S +  \Delta)$ is a plt pair.
Consider $A$ a big and nef $\QQ$-divisor on $X$ such that $S \not\subseteq \BB_{+} (A)$. Assume that there exists a positive integer
$k$ such that $M = k(K_X + S + A + \Delta)$ is Cartier. Finally, assume that $S\not\subseteq \BB(M)$.
The interest is in understanding the restricted algebra $R_S$ given by the direct sum over $m$ of the images of the maps 
$$H^0 (X, \OO_X (mM)) \longrightarrow H^0 (S, \OO_S (mM_S)).$$ 

We first apply reduction steps precisely as in Steps 1 and 2 of the proof of Theorem
\ref{extension0}, to reduce to the case when $X$ is smooth, $S + \Delta$ has SNC support, and the irreducible components of the support of $\Delta$ 
which intersect $S$ are disjoint. Let us denote these by $E_1,\ldots, E_r$, and denote also $F_i : = E_{i, S}$. Thus the only log-canonical centers of $(S, \lceil \Delta_S\rceil)$ 
are the $F_i$'s, and the only log-canonical centers of $(X, \lceil \Delta \rceil)$ which 
intersect $S$ are the $E_i$'s. The assumption $S \not \subseteq 
\BB_{+} (A)$ implies as usual, up to slightly modifying $\Delta$ (note that we are not making any transversality assumptions) that we can assume that $A$ is in fact ample, and we will do so in what follows.

Since the argument below is quite  technical, we again first include a brief oultine. Unlike in the extension theorems in previous sections, in the situation above
there are no transversality assumptions. We start by adding a small amount of positivity to our divisor, which has the effect of decreasing the order 
of vanishing of the restricted linear series along the $F_i$. At the same time we ensure by passing to a suitable birational model that the 
asymptotic order of vanishing of this new linear series along the $E_i$ is the same of that  of its restriction to $S$ along the $F_i$. These choices allow us to 
obtain transversality and apply extension via Theorem \ref{extension0}, after subtracting the $E_i$ with appropriate coefficients so that they disappear either from 
$\Delta$ or from the stable base locus of the new series. (A similar idea already appears in \cite{el1} \S6,  where the extra positivity is provided by the Seshadri constant.)
Finally, one gets rid of the extra positivity added initially by using the precise version of Takayama's lemma \ref{precise_takayama}.

Moving to details, we denote $\delta_i : = {\rm ord}_{E_i} (\Delta) = {\rm ord}_{F_i} (\Delta_S)$.
We consider the asymptotic order of vanishing of the restricted (to $S$) linear series associated $K_X + S + \Delta + A$ along $F_i$:
$$c_i := \ord_{F_i} \parallel K_X + S + \Delta + A \parallel_S =  \ord_{F_i} \parallel \frac{M}{k} \parallel_S ~\in \RR.$$
Define the $\RR$-divisor on $S$
$$F: =  \sum_{i=1}^r \min \{\delta_i, c_i\} F_i.$$
Note that the asymptotic order of vanishing of $K_X +  S + \Delta + A$ along $E_i$ may 
a priori be smaller than $c_i$. Hence we could not directly apply extension via Theorem \ref{extension0} to $M - k \sum_{i=1}^r \min \{\delta_i, c_i\} E_i$ even if this were to be a 
$\QQ$-divisor, since the $E_i$'s may still be contained in $\BB_{-} (M)$. We arrange things as  follows, in order to apply extension to a modified divisor. We fix a positive integer $m$, to be specified  later. Choose a small positive 
number $\epsilon$ such that $A + mk\epsilon  (K_X +S + \Delta) {\rm~is ~ample}$.
A small calculation shows that this implies 
$$d_i := \ord_{F_i} ||K_X+\Delta +S +(1+\frac{1}{mk})A||_S < (1-\epsilon)c_i.$$
To work with something rational we consider, for a positive integer $l$, the usual order 
of vanishing
$$e_i :=\frac{1}{lmk}\ord_{F_i} |lmk(K_X+S+\Delta +(1+\frac{1}{mk}) A)|_S = 
\frac{1}{lmk}\ord_{F_i} |l(mM  + A)|_S.$$
If $l$ is sufficiently large, we have as in Step 3 of the proof of Theorem \ref{extension0}
that $d_i \le e_i < (1 - \epsilon) c_i$.

\begin{theorem}\label{rational}
In the setting and notation above, for every integral divisor $B$ such that 
$$mk\sum_{i=1}^r \min \{\delta_i, (1-\epsilon)c_i\} F_i \le B \le mkF,$$
we have that $|mM_S - B| + B =  |mM|_S$.
In particular, if $F$ is a $\QQ$-divisor and $m$ is such that $mkF$ is integral, then
$$|mM_S - mkF| + mkF =  |mM|_S.$$
\end{theorem}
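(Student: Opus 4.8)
The plan is to establish the displayed identity of linear series on $S$ as two opposite inclusions, after a monotonicity reduction. Since $|mM_S - B| + B$ is decreasing in $B$, and every admissible $B$ lies between the coefficient-wise smallest admissible divisor $B_0 := \sum_i \lceil mk\min\{\delta_i,(1-\epsilon)c_i\}\rceil F_i$ and $mkF$, it suffices to prove $|mM|_S \subseteq |mM_S - B| + B$ for every integral $B \le mkF$ together with $|mM_S - B_0| + B_0 \subseteq |mM|_S$; concatenating these with monotonicity forces equality for every admissible $B$. The first inclusion is a multiplicity count: since $c_i = \ord_{F_i}\|\tfrac{M}{k}\|_S = \inf_q \tfrac{1}{qk}\ord_{F_i}|qM|_S$, taking $q = m$ gives $\ord_{F_i}|mM|_S \ge mk\,c_i \ge mk\min\{\delta_i,c_i\}$, so every member of the restricted series $|mM|_S$ vanishes along each $F_i$ to order at least the coefficient of $F_i$ in any integral $B \le mkF$, hence contains $B$ and lies in $|mM_S - B| + B$.

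The content is the reverse inclusion $|mM_S - B_0| + B_0 \subseteq |mM|_S$: every section $s$ of $mM_S$ with $D := Z(s) \ge B_0$ must lift to $X$. I would add positivity and then descend as in Takayama's argument. For $l$ large and divisible pick a general $H := A_l \in |lA|$ with $S \not\subset {\rm Supp}(H)$, so that $|lmM + H| = |l(mM+A)|$ with $l(mM+A) = lmk(K_X + S + \Delta + (1+\tfrac1{mk})A)$; then by the Precise Takayama Lemma \ref{precise_takayama} (applied with this $H$, its conditions (1) and (2) verified exactly as in Step~5 of the proof of Theorem \ref{extension0}, using a decomposition $A \sim_{\QQ} A' + C$ with $A'$ ample and $C$ small effective avoiding $S$) it suffices to lift $F_l := lD + H_S$ to a divisor in $|l(mM+A)|$.

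This last extension I would obtain from Theorem \ref{extension0} on a birational model. Keeping the reductions of the preamble ($X$ smooth, $S + \Delta$ simple normal crossings, the components $E_i$ of $\Delta$ meeting $S$ disjoint, all pseudo non-canonical centers separated into disjoint divisors), pass to a further model $g \colon Y \to X$ on which, in the manner of Hacon-M$^c$Kernan, the base locus of $|l\,g^*(mM+A)|$ restricts onto the base locus of its restricted-to-$\tilde S$ subseries along the proper transforms $E_i'$ and $F_i' := E_i'\cap\tilde S$; equivalently, the ambient order of vanishing of $l\,g^*(mM+A)$ along $E_i'$ equals the restricted order along $F_i'$, namely $lmk\,e_i$ with $e_i$ as in the setup (and $e_i < (1-\epsilon)c_i$ for $l$ large). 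On $Y$ one then subtracts $\sum_i n_i E_i'$, with $n_i := lmk\min\{\delta_i,e_i\}$, from the boundary of the crepant transform of $\Delta$, producing a Cartier divisor $\mathcal{M}$ of the shape $lmk(K_Y + \tilde S + \Delta'' + g^*((1+\tfrac1{mk})A))$; by the matching of orders and the triviality $\ord_E\|D\| \le \ord_E|D|$, each $E_i'$ either disappears from $\Delta''$ entirely (when $e_i \ge \delta_i$) or remains with coefficient $\delta_i - e_i \in (0,1)$ but leaves $\BB_-(\mathcal{M})$ (when $e_i < \delta_i$, since then we have subtracted exactly the fixed part $lmk\,e_i E_i'$ of $|l\,g^*(mM+A)|$ along $E_i'$), and moreover $\mathcal{M}$ is effective because $\sum_i n_i E_i'$ lies in that fixed part. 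Hence $\mathcal{M}$ satisfies all hypotheses of Theorem \ref{extension0}, so every section of $\mathcal{M}_{\tilde S}$ lifts to $Y$; since $F_l = lD + H_S$ vanishes along each $F_i$ to order $\ge l\lceil mk\min\{\delta_i,(1-\epsilon)c_i\}\rceil \ge n_i$, its pullback to $\tilde S$ minus $\sum_i n_i F_i'$ is a section of $\mathcal{M}_{\tilde S}$, hence lifts; pushing down to $X$ (exceptional effective pieces not affecting global sections, as in Step~1) gives the required divisor in $|l(mM+A)|$, and the Precise Takayama Lemma completes the proof.

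\textbf{The main obstacle} is this birational step together with the bookkeeping it entails: constructing $Y$ so that ambient and restricted base loci agree along the relevant divisors (a graded-system/Noetherianity argument in the spirit of Hacon-M$^c$Kernan), and then checking \emph{all} the $\BB_-$ and minimal-log-discrepancy hypotheses of Theorem \ref{extension0} for $\mathcal{M}$ after the subtraction of $\sum_i n_i E_i'$ — in particular that each surviving $E_i'$, and the codimension-two centers $F_i'$ on it, leave the restricted base locus of $\mathcal{M}$. This is precisely where both bounds on the coefficients of $B$ are used in an essential way: the upper bound $b_i \le mk\min\{\delta_i,c_i\}$ keeps $\mathcal{M}$ effective and controls the subtracted multiple, while the lower bound $b_i \ge mk\min\{\delta_i,(1-\epsilon)c_i\}$, together with the slack $e_i < (1-\epsilon)c_i$ engineered by the choice of $\epsilon$, is what forces the surviving $E_i'$ out of the restricted base locus after the perturbation.
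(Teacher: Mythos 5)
Your proposal follows essentially the same route as the paper: the easy inclusion via the asymptotic bound $\ord_{F_i}|mM|_S \ge mk\,c_i$, and the hard inclusion by passing to a birational model on which the ambient and restricted orders of $|l(mM+A)|$ along the $E_i$ agree, subtracting $\Gamma = lmk\sum_i \min\{\delta_i,e_i\}E_i$ so that Theorem \ref{extension0} applies to $M'=l(mM+A)-\Gamma$ (using $e_i<(1-\epsilon)c_i$ to ensure the divisor to be lifted dominates $\Gamma_S$), and finally removing $H\in|lA|$ via Lemma \ref{precise_takayama}. The step you single out as the main obstacle is resolved in the paper by an elementary construction: a partial log-resolution obtained as a composition of blow-ups along the codimension-two loci $F_i$, which makes the base ideal of $|l(mM+A)|$ divisorial of multiplicity $lmk\,e_i$ at the generic points of the $F_i$, exactly as one resolves base loci of linear series on surfaces by blowing up points.
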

\begin{proof}
We can perform one more birational modification, in order to be able to assume that
the orders of vanishing of the new linear series along $E_i$ are the same as those 
of their restrictions along the $F_i$. Namely, we consider a partial log-resolution 
$f : Y \rightarrow X$ of the linear series $| l(mM+ A)|$ as a composition of blow-ups along the codimension $2$ subvarieties $F_i$, in order to have basepoint-freeness at the generic points of codimension $2$ loci (precisely 
as in the well-known argument that base loci on surfaces can be resolved by blowing up points).
In other words (and since the $F_i$ are codimension $1$ in $S$), we can achieve the following:

\noi
(1) $T$, the proper transform of $S$ in $Y$, is isomorphic to $S$.
\\
\noi
(2) $f^*(K_X+S+\Delta) = K_Y +\Delta_1 +T$,
where $\Delta_1$ is an effective $\QQ$-divisor with SNC support
and $[\Delta_1] =0$. There exists an unique irreducible component of the support
of $\Delta_1$, which we continue to denote abusively by $E_i$, such that 
$E_i \cap T = F_i$.
\\
\noi
(3) The  base ideal of $|f^*(l(mM+A))|$ is of the of the form
$\OO_Y (lmk(\sum_{i=1}^r e_i F_i))$ at each of the generic points of the $F_i$. 

All linear series remain the same after performing this birational modification, so by switching back to the notation on $X$, we can thus assume in addition that 
$d_i = \ord_{E_i} ||K_X+\Delta +S +(1+\frac{1}{mk})A||$, which allows us to apply extension. Set 
$$\Gamma : = lmk(\sum_{i=1}^r \min\{e_i, \delta_i\} E_i) 
{\rm ~and~} M^{\prime} := l (mM+ A) - \Gamma.$$
This choice implies that no $E_i$ is contained in ${\rm Bs} |l(mM+ A)|$, and the same holds for $F_i$ 
on $S$, with respect to $|l(mM_S + A_S)|$. Since $M^\prime$ is still of the form required in Theorem \ref{extension0}, we can apply this result to deduce that the restriction map
\begin{equation}\label{useful_extension}
H^0 (X, \OO_X(M^\prime)) \longrightarrow H^0 (S, \OO_S(M^\prime_S))
\end{equation}
is surjective. We use this to prove the following:
\emph{If $B$ is an effective integral divisor on $S$ such that 
$B \ge mk \sum_{i=1}^r \min \{\delta_i, (1-\epsilon)c_i\} F_i$, then every divisor 
in the linear series $|mM_S - B| +  B$ can be lifted to $X$.}

To this end, consider a divisor $D \in |mM_S|$ that can be written 
as  $D = D_1 + B$ with $D_1$ an effective divisor in $|mM_S - B|$. Note that the hypothesis 
on $B$, together with the fact that $e_i < (1-\epsilon) c_i$, implies that $lB > \Gamma_S$. We  could 
choose from the very beginning $l$ sufficiently large so that $l A$  is very ample, and so for $H \in |l A|$ general, $(S,  \frac{mk-1}{lmk} H_S + \Delta_S)$ is klt. 
Putting all of this together, we can use (\ref{useful_extension}) in order to deduce that the divisor $lD + H_S = l D_1 + lB + H_S$ can be lifted to $X$. Finally, it is immediate to check that the two conditions
in the precise version of Takayama's lifting result, 
Lemma \ref{precise_takayama}, are satisfied. This implies that $D$ itself can be lifted to $X$. 

The argument above shows the inclusion $|mM_S - B| + B \subseteq  |mM|_S$.
On the other hand, if $B \le mkF$, the definition of $F$ implies the opposite inclusion. Indeed,  
we have that $F$ is contained in the stable base locus of the system of restricted linear series 
$\{|lM|_S\}_l$. Hence we obtain 
$$|mM_S - B| + B =  |mM|_S.$$
Finally, if $F$ is a $\QQ$-divisor, and $m$ is chosen such that  $mkF$ is integral, we can apply the above with $B = mkF$.
\end{proof}

\begin{remark}
If $F$ in the proof above is a $\QQ$-divisor (which happens for example when $\delta_i \le c_i$ for all $i$), assuming by induction simply that finite generation holds in dimension $n-1$, in particular on $S$, plus the well-known fact that is enough to check finite generation for a Veronese subalgebra, we obtain that the restricted algebra $R_S$ is finitely generated. This statement is enough to conclude  the existence of $PL$-flips,  hence the existence of flips, according to the method of Shokurov explained in \cite{hm2} and \cite{hm3}. In general it need not a priori be the case that $F$ is rational. Hacon and 
M$^c$Kernan provide a rather quick key argument in \cite{hm3}, Theorem 7.1, using diophantine approximation and crucially a stronger MMP assumption in dimension $n-1$ following from \cite{bchm}, in order to show that the coefficients must indeed be rational.
\end{remark}

\end{document}